\def\R{\mathbb{R}}
\def\rn{\mathbb{R}^n}
\def\N{\mathbb{N}}
\def\Z{\mathbb{Z}}
\def\d{{\fam0 d}}
\def\dist{\operatorname{dist}}
\let\hat\widehat
\let\bar\widebar
\newtheoremstyle{MyPlain}{}{}{\itshape}{}{\bfseries}{.}{5pt plus 4pt minus 3pt}{\thmname{#1}\thmnumber{ #2}\thmnote{ \textbf{[#3]}}}
\theoremstyle{MyPlain}
\newtheorem{theorem}{Theorem}[section]
\newtheorem{lemma}[theorem]{Lemma}
\newtheorem{proposition}[theorem]{Proposition}
\newtheorem{corollary}[theorem]{Corollary}
\newtheoremstyle{MyRemark}{}{}{\upshape}{}{\bfseries}{.}{5pt plus 1pt minus 1pt}{}
\theoremstyle{MyRemark}
\newtheorem{remark}[theorem]{Remark}
\newtheorem{example}[theorem]{Example}
\numberwithin{equation}{section}
\let\expandafter\oldproof\csname\string\proof\endcsname
\let\oldendproof\endproof
\renewenvironment{proof}[1][\proofname]{%
  \oldproof[{{\bf #1.}}]%
}{\oldendproof}
\begin{document}

\title{Optimal  domain spaces in Orlicz-Sobolev  embeddings}

\thanks{%
This research was partly supported by the Research Project of Italian Ministry of University and
Research (MIUR)  2012TC7588 ``Elliptic and parabolic partial differential
equations: geometric aspects, related inequalities, and
applications" 2012,  by  GNAMPA  of Italian INdAM (National
Institute of High Mathematics),
by the grant P201-13-14743S of the Grant Agency of the Czech Republic and by the Charles University, project GAUK No.~33315.}

\begin{abstract}
We deal  with Orlicz-Sobolev  embeddings  in  open subsets of $\mathbb{R}^n$. A necessary and sufficient condition is established for the existence of an optimal, i.e.\ largest possible, Orlicz-Sobolev  space  continuously embedded into a given Orlicz space.   Moreover, the optimal Orlicz-Sobolev  space is  exhibited whenever it exists. Parallel questions are addressed for Orlicz-Sobolev embeddings into Orlicz spaces with respect to a Frostman measure, and, in particular, for trace embeddings  on the boundary. 
\end{abstract}

\author{Andrea Cianchi}
\address{Dipartimento di Matematica e Informatica ``Ulisse Dini'',
University of Florence,
Viale Morgagni 67/A, 50134
Firenze,
Italy}
\email{andrea.cianchi@unifi.it}
\urladdr{0000-0002-1198-8718}

\author{V\'\i t Musil}
\address{Department of Mathematical Analysis,
Faculty of Mathematics and Physics,
Charles University,
So\-ko\-lo\-vsk\'a~83,
186~75 Praha~8,
Czech Republic}
\email{musil@karlin.mff.cuni.cz}
\urladdr{0000-0001-6083-227X}

\date{\today}

\subjclass[2000]{46E35, 46E30}
\keywords{
Orlicz spaces, Sobolev embeddings, optimal domain spaces,  Frostman measures, trace inequalities
}

\maketitle

\bibliographystyle{plain}

\section{Introduction}

The present paper deals with Orlicz-Sobolev embeddings, namely embeddings of Sobolev type, involving norms in Orlicz spaces. The family of Orlicz spaces includes that of the usual Lebesgue spaces, and provides a  flexible, well suited framework for a unified  description  of Sobolev embeddings. 
%The use of Orlicz norms in spaces of weakly differentiable functions leads to the notion of Orlicz-Sobolev spaces. 
Orlicz-Sobolev spaces are  an appropriate functional setting for the analysis of nonlinear partial differential equations and variational problems governed by nonlinearities of non-necessarily polynomial type. The study of these problems has received an increasing attention over the years -- see e.g. \citep{AcerbiMingione, Ball, Baroni, BreitSchirra, BreitStrofVerde, BulDiSchw, BulMaMa, Eyring, Korolev, Lieberman, Marcellini, Talenti79, Talenti90, Wr} -- and is motivated, among other reasons, by applications to mathematical models for physical phenomena, such as nonlinear elasticity and non-Newtonian  fluid-mechanics.

A basic version of the Orlicz-Sobolev embeddings to be considered here amounts to
\begin{equation}\label{sob0}
W^{m,A}_0 (\Omega) \to L^B(\Omega)\,,
\end{equation}
where $\Omega$ is an open subset of Euclidean space $\rn$, $n \geq 2$, having Lebesgue measure $|\Omega|$, $A$ and $B$ are Young functions, $L^B(\Omega)$ is the Orlicz space on $\Omega$ built upon $B$, and $W^{m,A}_0 (\Omega)$ is the $m$-th order Orlicz-Sobolev space built upon $A$. The subscript $0$ denotes that functions vanishing (in a suitable sense) on the boundary $\partial \Omega$, together with their derivatives up to the order $m-1$, are taken into account. The arrow $\lq\lq \to "$ stands for continuous inclusion. Precise definitions on these topics are recalled in Section~\ref{back}.

We are concerned with the optimal form of the relevant embeddings. Given $A$, we say that $L^B(\Omega)$ is the optimal Orlicz target space  in \eqref{sob0} if it is the smallest Orlicz space on $\Omega$ that renders \eqref{sob0} true. The expression \lq\lq smallest" means that if \eqref{sob0} holds with $L^B(\Omega)$ replaced with another Orlicz space $L^{\widehat B}(\Omega)$, then $L^B(\Omega) \to L^{\widehat B}(\Omega)$. Analogously, given $B$, the space $W^{m,A}_0 (\Omega)$ is said to be the optimal Orlicz-Sobolev domain  in \eqref{sob0} if it is the largest Orlicz-Sobolev space on $\Omega$ for which  \eqref{sob0} holds. Namely, if, whenever \eqref{sob0} holds with $W^{m,A}_0 (\Omega)$ replaced by another Orlicz-Sobolev space $W^{m,\widehat A}_0 (\Omega)$, then $W^{m,\widehat A}_0 (\Omega) \to W^{m, A}_0 (\Omega)$.

The question of best possible Orlicz target spaces in Sobolev type embeddings has attracted the attention  of various authors over the years. In particular,  embeddings for  the  critical Sobolev space $W^{m, \frac nm}_0(\Omega)$, and for special Orlicz-Sobolev spaces \lq\lq close" to it, have been investigated in several contributions, including  \cite{Yudovich,  Pokhozhaev, Strichartz:Indiana, Trudinger, HMT, Mazya, Talenti,   EGO}.
Results for arbitrary Orlicz-Sobolev spaces, which  however need not provide the optimal Orlicz target, can be found in \cite{DT, Adams}. 

The optimal Orlicz target problem has been solved in general in \cite{Cianchi:Indiana} for $m=1$ (see also  \cite{Cianchi:Comm} for an alternate formulation of the solution), and in \cite{Cianchi:Forum} for arbitrary $m \in \mathbb N$.  As shown in these papers, given any Orlicz-Sobolev space $W^{m,A}_0(\Omega)$, there always exists an optimal target Orlicz space $L^B(\Omega)$ in \eqref{sob0}, and the function $B$ admits an explicit expression in terms of $A$, $n$ and $m$.
%-- see \cite{Cianchi:Indiana}, and also  \cite{Cianchi:Comm} for an alternate expression.  
Thus, the  class of Orlicz spaces  is closed under the operation of associating an optimal target in Sobolev embeddings. By contrast, this property is not enjoyed by the smaller family of Lebesgue spaces, namely in the context of classical Sobolev embeddings. Actually, if  $A(t) = t^p$ for some $p \geq 1$, so that $W^{m,A}_0 (\Omega)$ agrees with the usual Sobolev space $W^{m,p}_0 (\Omega)$, and   $|\Omega|<\infty$,  one has that
\begin{equation}
	\label{classical}
	W^{m,p}_0 (\Omega) \to
	\begin{cases} L^{\frac{mp}{n-mp}}(\Omega) &
		\text{if $1 \leq m <n$ and $1 \leq p < \tfrac nm$,}
		\\
		\exp L^{\frac n{n-m}}(\Omega)  & \text{if $1 \leq m <n$ and $p =
		\tfrac nm$,}
		\\
		L^\infty (\Omega)  & \text{if either $1 \leq m <n$ and $ p > \tfrac
		nm$, or $ m \geq n$,}
	\end{cases}
\end{equation}
all targets being optimal in the class of Orlicz spaces. Here, $\exp L^{\frac n{n-m}}(\Omega)$ denotes the Orlicz space  associated with the Young function $e^{t^{\frac{n}{n-m}}}-1$. The first and the third embedding in \eqref{classical} are nothing but the classical Sobolev embedding. The second one was independently obtained by Yudovich \cite{Yudovich},  Pokhozhaev \cite{Pokhozhaev}, Strichartz \cite{Strichartz:Indiana}, and, for $m=1$,  by Trudinger \cite{Trudinger}. Note that, 
in the first and third embedding, the target is a Lebesgue space, and it is hence optimal also in this subclass, but no optimal Lebesgue target space exists in the second embedding.

%Let us emphasize that, whereas the targets in the first and in the third embedding are of course also optimal  in the subclass of the Lebesgue spaces, no optimal Lebesgue target space exists in the second embedding. 

%Under this respect, the larger class of Orlicz spaces is a better suited environment than that of the Lebesgue spaces, since, unlike the latter, the former is closed under the operation of associating and optimal target in Sobolev embeddings.

The situation is different, and   subtler  in a sense, when the optimal Orlicz-Sobolev domain space $W^{m,A}_0 (\Omega)$ in \eqref{sob0}, for a given Young function $B$,  is in question. Actually, the existence of such an optimal domain is not guaranteed for every $B$. Testing the problem on the  spaces appearing in \eqref{classical} may help  have an idea of the possibilities that may occur. Assume that 
$$
	L^B(\Omega) = L^q(\Omega)
$$
for some $q \in [1, \infty]$. It is well known that, if $m \geq n$, then  $W^{m,1}_0(\Omega) \to L^\infty(\Omega)$, and hence, in particular,
\begin{equation}
	W^{m,1}_0(\Omega) \to L^q(\Omega),
	\label{classicaldom1}
\end{equation}
for every $q \in [1, \infty]$. Embedding \eqref{classicaldom1} continues to hold even if $1\leq m <n$, provided that  $q \leq \frac{n}{n-m}$. On the other hand,  if $1\leq m <n$ and $\frac{n}{n-m}<q<\infty$, then
\begin{equation}
	W^{m,\frac{nq}{n+mq}}_0(\Omega) \to L^q(\Omega).
	\label{classicaldom2}
\end{equation}
Both domain spaces in \eqref{classicaldom1} and \eqref{classicaldom2} are optimal among all Orlicz-Sobolev spaces \citep[Example 5.2]{Musil}. Instead, if $1 \leq m <n$,
\begin{equation}
	\text{``no optimal Orlicz-Sobolev space''} \to \exp L^{\frac n{n-m}}(\Omega)
	\label{classicaldom3}
\end{equation}
and
\begin{equation}
	\text{``no optimal Orlicz-Sobolev space''} \to  L^{\infty}(\Omega),
	\label{classicaldom4}
\end{equation}
see \citep[Theorem 4.3]{NAFSA98} and \citep[Theorem~6.4 (ii)]{CianchiPick:AM}, respectively, for the case when $m=1$, and \citep[Example 5.1~(b)]{Musil} for arbitrary $m\in \mathbb N$.
Equation \eqref{classicaldom3} means that any Orlicz-Sobolev space that is continuously embedded into the Orlicz space $\exp L^{\frac n{n-m}}(\Omega)$ can be replaced with a strictly larger Orlicz-Sobolev space which is still continuously embedded into $\exp L^{\frac n{n-m}}(\Omega)$. Equation \eqref{classicaldom4}, as well as similar statements about non-existence of optimal Orlicz-Sobolev domain spaces in what follows, has to be interpreted in an analogous sense.
In particular, interestingly enough, the space $W^{m, \frac nm}_0(\Omega)$, appearing on the left-hand side of \eqref{classical} when $p=\tfrac nm$, turns out  not to be optimal for Orlicz-Sobolev embeddings into $\exp L^{\frac n{n-m}}(\Omega)$.

As far as we know, these are the only instances for which the answer to   the optimal Orlicz-Sobolev domain problem is available in the literature.  The recent contribution \cite{Musil} provides a solution to an analogous problem for Orlicz-Sobolev embeddings of weak type, namely into  Marcinkiewicz  spaces.

Our aim here is to fill in this gap, and to address this question in full generality. We establish  a necessary and sufficient condition on the Young function $B$ for an optimal Orlicz-Sobolev domain $W^{m,A}_0(\Omega)$ to exist in \eqref{sob0}. Moreover, we exhibit   the optimal Young function $A$ when such an optimal domain   does exist. This is the content of Theorem~\ref{thm:W0}.

In fact, as mentioned above, our analysis is not confined to \eqref{sob0}, but also includes other related embedding problems. A natural variant amounts to
\begin{equation}
	W^{m, A}(\Omega) \to L^B(\Omega),
	\label{sob}
\end{equation}
where  $W^{m, A}(\Omega)$ is an Orlicz-Sobolev space of functions that are not subject to any  boundary condition. Under suitable regularity assumptions on $\Omega$, which are indispensable even in the classical Sobolev embedding, we show that the conclusions are exactly the same as for \eqref{sob0} -- see Theorem~\ref{thm:domainJohn}. 

%Open sets $\Omega$ from the broad class of John domains are admissible in our discussion of the optimal Orlicz-Sobolev domain space in \eqref{sob}.

Embeddings of the form \eqref{sob}, with $\Omega = \rn$, namely
\begin{equation}\label{sobrn}
	W^{m, A}(\rn) \to L^B(\rn),
\end{equation}
are the subject of Theorem~\ref{thm:domainRn}. The point here is that, unlike the case of sets $\Omega$ of finite measure, the behavior of the Young functions $A$ and $B$ near $0$ plays a role as well.

Finally, in Theorem~\ref{thm:Traces} the more general issue is faced of optimal Orlicz-Sobolev domains for embeddings into Orlicz spaces with respect to a Frostman measure $\mu$ on $\bar \Omega$. These read
\begin{equation}
	\label{sobtrace}
	W^{m, A}(\Omega) \to L^B(\bar\Omega , \mu),
\end{equation}
where $\Omega$ is a bounded Lipschitz domain, $\bar \Omega$ denotes the closure of $\Omega$, and $\mu$ is a Borel measure on $\bar \Omega$ such that 
\begin{equation}\label{E:sup}
\mu\bigl( B_r(x)\cap \bar{\Omega} \bigr) \leq C r^{\gamma} \quad 	\hbox{for every $x\in \rn$ and $r>0$,} 
\end{equation}
for some constants $C>0$ and $\gamma \in [n-m, n]$. Here, 
$B_r(x)$ denotes the ball centered at $x$, with radius~$r$. The restriction
$\gamma \geq n-m$ is imposed to guarantee that a trace operator on $\bar\Omega$,
endowed with the measure $\mu$, be well defined  on the space $W^{m, A}(\Omega)$, whatever  the Young function $A$ is. 

Of course, measures $\mu$ supported in $\Omega$, and hence embeddings into
Orlicz spaces $ L^B(\Omega , \mu)$, are included as special cases. On the other
hand, measures $\mu$ supported in $\partial \Omega$ correspond to trace
inequalities in a classical sense. In particular, on denoting by $\mathcal H^\gamma $ the $\gamma$-dimensional
Hausdorff measure, the choice  $\mu = \mathcal H^{n-1}|_{\partial \Omega}$
turns \eqref{sobtrace} into the boundary trace embedding 
\begin{equation}
	 W^{m, A}(\Omega) \to L^B(\partial \Omega)
	\label{sobboundary}
\end{equation}
enucleated in Corollary~\ref{cor:boundary}. Another customary  specialization
of $\mu$ amounts to the case when $\mu = \mathcal H^{d}|_{\Omega \cap {\mathcal
N}_d}$, where $d\in \mathbb N$, and ${\mathcal N}_d$ denotes a $d$-dimensional
compact submanifold of $\rn$. Embedding \eqref{sobtrace}  takes the form 
\begin{equation}\label{subspaces}
	 W^{m, A}(\Omega) \to L^B(\Omega \cap {\mathcal N}_d)
\end{equation}
in this case, with $d \in [n-m, n]$, see Corollary~\ref{cor:subspaces}.
Clearly, $\Omega \cap {\mathcal N}_d$ can, in particular, equal the
intersection of $\Omega$ with a $d$-dimensional affine subspace of $\rn$.

\iffalse
Finally, in Theorem~\ref{thm:Traces} the more general issue is faced of optimal Orlicz-Sobolev domains for trace embeddings of the form
\begin{equation}
	\Trd\colon W^{m, A}(\Omega) \to L^B(\Omega_d),
	\label{sobtrace}
\end{equation}
where $\Omega_d$ denotes the intersection of $\Omega$ with a $d$-dimensional affine subspace of $\rn$, and $\Trd$ stands for the trace operator on $\Omega_d$. The related problem for the boundary trace embedding
\begin{equation}
	\Trp\colon W^{m, A}(\Omega) \to L^B(\partial \Omega)
	\label{sobboundary}
\end{equation}
 is considered in Theorem~\ref{thm:boundary}. In \eqref{sobboundary}, the notation $\Trp$ is employed for the trace operator on the boundary $\partial \Omega$ of a Lipschitz domain $\Omega$.

\fi

The results mentioned above are stated in Section~\ref{main}, that also contains applications to special instances of  Orlicz spaces. The necessary background material is collected in Section~\ref{back}. Section~\ref{indices} is devoted to certain properties and relations among the Boyd indices of the Young functions that play a role in our analysis.  Proofs of the main results are presented in the final Section~\ref{proofs main}.

\section{Background}\label{back}

%By $A \lesssim B$ and $A \gtrsim B$ we mean that $A \le C\,B$ and $A \ge C\,B$, respectively, where $C$ is a positive constant independent of the appropriate quantities involved in $A$ and $B$. We shall write $A \simeq B$ when both of the estimates $A \lesssim B$ and $A \gtrsim B$ are satisfied.

\subsection{Young functions}

We call $A\colon [0, \infty) \to [0, \infty]$  a Young function if it is  convex, left-continuous, and $A(0)=0$. Any function of this kind satisfies, in particular,
\begin{equation}\label{kt}
kA(t) \leq A(kt)
\quad \text{if $k \geq 1$ and $t \geq 0$.}
\end{equation}
The Young conjugate 
 $\widetilde{A}$ of  $A$ is given by
\begin{equation*}
    \widetilde{A}(t)=\sup \{st-A(s): s \geq 0\} \quad \text{for $t\ge 0$.}
\end{equation*}
The  function $\widetilde{A}$ is a Young function as well, and its Young conjugate is again $A$. 
One has that
\begin{equation} \label{eq:YoungCompl}
    t\le A^{-1}(t)\,\widetilde{A}^{-1}(t)\le 2t\quad \text{for $t\ge 0$,}
\end{equation}
where $A^{-1}$ denotes the generalized right-continuous inverse of $A$. 
The function $B$, defined as  $B(t) = cA(bt)$, where $b,c$ are positive constants,  is also a Young function and
\begin{equation} \label{YoungCompl}
    \widetilde{B}(t) = c\widetilde{A}\bigl(\textstyle\frac{t}{bc}\bigr) \quad \text{for $t\ge 0$.}
\end{equation}
A Young function $A$ is said to satisfy the $\Delta_2$-condition near infinity  [resp.\ near zero] [resp.\ globally] if it  is finite-valued and there exist constants $c>0$ and $t_0 > 0$ such that
\begin{equation*}
    A(2t) \le c A(t)
        \quad \text{for   $t \geq t_0$ \,\, [$0\leq t \leq t_0$] \,\, [$t\geq 0$].}
\end{equation*}
A Young function $A$ is said to dominate another Young function $B$ near infinity [near zero] [globally] if there exist   constants $c>0$ and $t_0 > 0$ such that
\begin{equation*}
B(t) \leq A(ct) \quad \text{for $t \geq t_0$ \,\, [$0\leq t \leq t_0$] \,\, [$t \geq 0$].}
\end{equation*}
The functions $A$ and $B$ are called equivalent near infinity [near zero] [globally] if they dominate each other near infinity [near zero]  [globally]. 
\\ More generally, the terminology \lq\lq near infinity",  \lq\lq near zero",  \lq\lq globally" will be adopted to indicate that some property of a function of $t$ holds for $t \geq t_0$, for $0\leq t \leq t_0$ or for  $t \geq 0$, respectively.

\subsection{Boyd indices}
Given a Young function $A$, we define the function $h_A^\infty\colon (0, \infty) \to [0, \infty)$ as 
\begin{equation*}
	h_A^\infty (t) = \sup_{s>0} \frac{A^{-1}(st)}{A^{-1}(s)} 
    \quad \text{for $t>0$.}
\end{equation*}
The global lower and upper Boyd indices of $A$ are then defined as
\begin{equation} \label{BIdef}
    i_A^\infty = \sup_{1<t<\infty} \frac{\log t}{\log h_A^\infty(t)}
        \quad\text{and}\quad
    I_A^\infty	= \inf_{0<t<1} \frac{\log t}{\log h_A^\infty(t)}\,,
\end{equation}
respectively. One has that
\begin{equation} \label{BIprop1}
    1\le i_A^\infty \le I_A^\infty \le \infty.
\end{equation}
It can also be shown that
\begin{equation} \label{BIprop2}
    i_A^\infty	= \lim_{t\to\infty} \frac{\log t}{\log h_A^\infty(t)}
        \quad\text{and}\quad
    I_A^\infty	= \lim_{t\to 0^+} \frac{\log t}{\log h_A^\infty(t)}.
\end{equation}

The Boyd indices of $A$ admit an alternate expression, that does not call into play $A^{-1}$, provided that $A$ is finite-valued. 
Define $\widehat{h}_A^\infty\colon (0, \infty) \to [0, \infty)$ as 
\begin{equation*}
	\widehat{h}_A^\infty (t) = \sup_{s>0} \frac{A(st)}{A(s)} 
    \quad \text{for $t>0$.}
\end{equation*}
Then, 
\begin{equation}
	i_A^\infty = \sup_{0<t<1} \frac{\log\widehat{h}_A^\infty(t)}{\log t}
		\quad\text{and}\quad
	I_A^\infty = \inf_{1<t<\infty} \frac{\log\widehat{h}_A^\infty(t)}{\log t}.
	\label{pre:BoydIndex}
\end{equation}
Furthermore,
the supremum and infimum in \eqref{pre:BoydIndex} can be replaced with the
limits as $t\to 0^+$ and $t\to\infty$, respectively.
\\
The local lower and upper Boyd indices $i_A$ and $I_A$ of $A$
are defined as in \eqref{BIdef}, with $h_A^\infty$ replaced by the function $h_A\colon (0, \infty) \to [0, \infty]$ given by
\begin{equation*}
	h_A(t) = \limsup_{s\to\infty} \frac{A^{-1}(st)}{A^{-1}(s)} 
    \quad \text{for $t>0$.}
\end{equation*}
Properties parallel to \eqref{BIprop1} and \eqref{BIprop2} hold, with 
$i_A^\infty$ and $I_A^{\infty}$ replaced by $i_A$ and $I_A$.
Moreover, on 
defining $\widehat{h}_A\colon (0, \infty) \to [0, \infty)$ as 
\begin{equation*}
	\widehat{h}_A(t) = \limsup_{s\to\infty} \frac{A(st)}{A(s)} 
    \quad \text{for $t>0$,}
\end{equation*}
a version of equation   \eqref{pre:BoydIndex} holds for $i_A$ and
$I_A$, with proper replacements, namely
\begin{equation}
	i_A = \sup_{0<t<1} \frac{\log\widehat{h}_A(t)}{\log t}
		\quad\text{and}\quad
	I_A = \inf_{1<t<\infty} \frac{\log\widehat{h}_A(t)}{\log t}.
	\label{march1}
\end{equation}
 Observe that if the function $A^{-1}(t)\,t^{-\sigma}$ is equivalent globally [near infinity], up to multiplicative
positive constants,  to a non-decreasing function, for some $\sigma \in (0,1)$,
then $I_A^\infty \le 1/\sigma$ [$I_A\le 1/\sigma$]. Similarly, if the function $A^{-1}(t)\,t^{-\sigma}$ is equivalent globally [near infinity]
to a non-increasing function, then $i_A^\infty\ge 1/\sigma$
[$i_A \ge 1/\sigma$].
\\
In the special case when 
 $A(t)=t^p$ for some $p\geq 1$, one has that $i_A^\infty=I_A^\infty =p$; furthermore,   if $A(t)=
\infty$ for large $t$, then $i_A=I_A=\infty$.
\\
We refer the reader to \cite{Boyd:Pacific} for more details on the material of this subsection.

\subsection{Orlicz spaces}

Let $\mathcal R$ be a sigma-finite, non-atomic, measure space endowed with a measure $\nu$. 
Denote by $\mathcal{M}(\mathcal R)$ the space of real-valued $\nu$-measurable functions in $\mathcal R$, and by $\mathcal{M}_+(\mathcal R)$ the set of nonnegative functions in $\mathcal{M}(\mathcal R)$. Given a 
Young function $A$, the Orlicz space $L^A(\mathcal R)$ is the collection  of all functions $f\in\mathcal{M}(\mathcal R)$ such that
\begin{equation*}
    \int_{\mathcal R} A\left(\frac{|f(x)|}{\lambda}\right)\,\d \nu (x) <\infty
\end{equation*}
for some $\lambda >0$.
The Orlicz space $L^A(\mathcal R)$ is a Banach space endowed with the Luxemburg norm defined as 
\begin{equation*}
    \|f\|_{L^A(\mathcal{R})}=\inf\left\{\lambda>0:
        \int_{\mathcal{R}}A\left(\frac{|f(x)|}{\lambda}\right)\,\d \nu (x) \le 1\right\}
\end{equation*}
for $f \in \mathcal M(\mathcal R)$.
The choice  $A(t)=t^p$, with  $1\le p < \infty$, yields $L^A(\mathcal R) = L^p(\mathcal R)$, the customary Lebesgue space. When $A(t)=0$ for $t \in [0,1]$ and $A(t)=\infty$ for $t \in (1, \infty)$, one has that $L^A(\mathcal R)=L^\infty (\mathcal R)$.
%, and $\| f \|_{L^A(\Omega)} = p^{-1/p}\| f \|_{L^p(\Omega)}$.

Let $E$ be a non-negligible measurable subset of $\mathcal R$, and let $\chi _E$ denote its characteristic function. Then 
\begin{equation}\label{chiE}
        \|\chi_E\|_{L^A(\mathcal R)} = \frac{1}{A^{-1}\bigl({\textstyle \frac{1}{|E|}}\bigr)}.
\end{equation}
%for every measurable $E\subseteq\Omega$ of a positive measure, thus $\| \chi_E \|_{L^A(\Omega)} = \| \chi_F \|_{L^A(\Omega)}$ whenever the measurable sets $E$ and $F$ have the same Lebesgue measure. The fundamental function $\varphi_{A}$ defined by the formula
The fundamental function $\varphi_{A}$ of $L^A(\mathcal R)$ is defined as 
\begin{equation*}
    \varphi_{A}(s) = \frac{1}{A^{-1}\bigl(\frac{1}{s}\bigr)} \quad \hbox{for $0<s<\nu (\mathcal R)$,}
\end{equation*}
and $\varphi_{A}(0)=0$.
Owing to \eqref{chiE},
\begin{equation*}
    \varphi_{A} (s) = \| \chi_E \|_{L^A(\mathcal R)}
       \end{equation*}
       for every set $E\subset \mathcal R$ such that
        $\nu(E)=s$.
       A H\"older type inequality in Orlicz spaces asserts that
       \begin{equation}\label{holder}
\|g\|_{L^{\widetilde A}(\mathcal R)}     \leq \sup_{f \in L^A(\mathcal R)} \frac{\int _{\mathcal R}f(x)g(x)\, \d\nu (x)}{\|f\|_{L^{ A}(\mathcal R)} } \leq 2 \|g\|_{L^{\widetilde A}(\mathcal R)} 
       \end{equation}
       for every $g \in L^{\widetilde A}(\mathcal R)$.
%has therefore the following form
%\begin{equation*}
    %\varphi_{A}(t) = \frac{1}{A^{-1}\bigl(\frac{1}{t}\bigr)},
        %\quad t\in (0,|\Omega|),\quad \varphi_{L^A}(0)=0.
%\end{equation*}

The inclusion relations between Orlicz spaces can be characterized in terms of the notion of domination between Young functions. Assume that $\nu (\mathcal R) < \infty$ [$\nu (\mathcal R) = \infty$], and let $A$ and $B$ be Young functions. Then 
\begin{equation}\label{inclusion}
\hbox{$L^{A}(\mathcal R) \to L^{B}(\mathcal R)$ if and only if 
$A$ dominates $B$ near infinity [globally].}
\end{equation}

The alternate notation $A(L)(\mathcal R)$ for the Orlicz space $L^A(\mathcal R)$ will be adopted when convenient. In particular, 
if $\nu (\mathcal R) < \infty$, and $A(t)$ is equivalent to $t^p (\log (1 +t))^\alpha$ near infinity, where either $p>1$ and $\alpha \in \mathbb R$, or $p=1$ and $\alpha \geq 0$, then the Orlicz space $L^A(\mathcal R)$ is the so-called Zygmund space  denoted by $L^p(\log L)^\alpha (\mathcal R)$.  Orlicz spaces of exponential type are denoted by $\exp L^\beta (\mathcal R)$, and are built upon the Young function $A(t)= e^{t^\beta}-1$, with $\beta >0$. 
%is governed by inequalities involving the corresponding Young functions. If $A$ and $B$ are Young functions then $L^A(\Omega)\to L^B(\Omega)$ if and only if there exist $c>0$ and $t_0\ge 0$ such that
%\begin{equation*}
   % B(t)\le A(ct),\quad t\ge t_0,
%\end{equation*}
%which we denote by $B\prec A$ or $A\succ B$. We require $t_0=0$ in the case when $|\Omega|=\infty$. If both $A\prec B$ and $A\succ B$ hold, we say that $A$ and $B$ are equivalent.

\subsection{Marcinkiewicz spaces}

We denote by $M^A(\mathcal R)$ the weak Orlicz space associated with $A$, namely the Marcinkiewicz type space 
endowed with the norm obeying
\begin{equation*}
    \|f\|_{M^A(\mathcal R)}
        %=\sup_{0<s<\nu (\mathcal R)}f^{**}(s)\,\varphi_A(t)
        = \sup_{0<s<\nu (\mathcal R) } \frac{f^{**}(s)}{A^{-1}(\textstyle \frac{1}{s})}
\end{equation*}
for $f\in\mathcal{M}(\mathcal R)$. Here, $f^{**}\colon (0, \infty) \to [0, \infty]$ is the  function defined as  
\begin{equation*}
    f^{**}(s) = \frac{1}{s} \int_{0}^{s} f^*(r)\,\d r \quad \hbox{for $s >0$,}
\end{equation*}
where $f^*\colon [0, \infty) \to [0, \infty]$ denotes the decreasing rearrangement of $f$ given by
\begin{equation*}
    f^*(s)=\inf
        \bigl\{t>0:
            \bigl|\bigl\{x\in\mathcal R: |f(x)|>t\bigr\}\bigr|\le s
        \bigr\}\quad \hbox{for $s \geq 0$.}
\end{equation*}
Since  $f^*(s) \leq f^{**}(s)$  for $s>0$,
\begin{equation}\label{f*f**}
\sup_{0<s<\nu (\mathcal R) } \frac{f^{*}(s)}{A^{-1}(\textstyle \frac{1}{s})} \leq \|f\|_{M^A(\mathcal R)}
\end{equation}
for every $f \in \mathcal R$. The associate space to $M^A(\mathcal R)$ is denoted by $(M^A)'(\mathcal R)$, and endowed with the norm given by
\begin{equation}\label{MB'}
\|f\|_{(M^A)'(\mathcal R)}= 
\sup_{g \in M^A(\mathcal R)} \frac{\int _{\mathcal R}f(x)g(x)\, \d\nu (x)}{\|g\|_{M^{ A}(\mathcal R)} }\,
\end{equation}
for $f \in \mathcal{M}(\mathcal R)$. 
\\
The Orlicz space built upon a Young function $A$ is embedded  into the corresponding Marcinkiewicz space, namely 
\begin{equation*}
    L^A(\mathcal R) \to M^A(\mathcal R)
\end{equation*}
for every Young function $A$. Moreover, 
\begin{equation}\label{phiM}
\|\chi_E\|_{M^A(\mathcal R)} \simeq \varphi _A(|E|)
\end{equation}
for every measurable set $E \subset \mathcal R$. Here, and in what follows, the relation $\simeq$ between two expressions means that they are bounded by each other, up to multiplicative positive constants independent of the involved relevant variables. 

%$M^A(\mathcal R)$ and $L^A(\mathcal R)$ share the same fundamental function $\varphi _A$ defined as in \eqref{fund}.

%For normed spaces $X$ and $Y$, we shall denote by $X\to Y$ the fact that the inclusion $X\subseteq Y$ is continuous, i.e.~there exists a constant $C$ such that
%\begin{equation*}
    %\|f\|_Y \le C \|f\|_X,
    %\quad\text{for every $f\in X$}.
%\end{equation*}

\subsection{Orlicz-Sobolev spaces}

Let $n\in\N$, $n\geq 2$, and let $\Omega$ be an open subset of $\R^n$. Given $m\in\N$ and a Young function $A$, the $m$-th order Orlicz-Sobolev space built upon $A$  is defined by
\begin{multline*}
    W^{m,A}(\Omega) =
        \Bigl\{u\in\mathcal{M}(\Omega): u\text{ is $m$-times weakly differentiable in $\Omega$, and } \\
            |\nabla^k u|\in L^A(\Omega),\, k=0,1,\ldots, m\Bigr\}.
\end{multline*}
Here, $\nabla^k u$ denotes the vector of all $k$-th order weak derivatives of $u$ and $\nabla^0 u = u$.  One has that $W^{m,A}(\Omega)$ is a Banach space equipped with the norm defined as
$$
    \|u\|_{W^{m,A}(\Omega)} =
        \sum_{k=0}^{m} \,\|\nabla^k u \|_{L^A(\Omega)}
$$
for $u\in W^{m,A}(\Omega)$.
By $W_0^{m,A}(\Omega)$ we denote the subspace of $W^{m,A}(\Omega)$ of those functions $u$ in $\Omega$ whose continuation by $0$ outside $\Omega$ belongs to $W^{m,A}(\R^n)$.
The  notations $W^mL^A(\Omega)$ and  $W^m A(L)(\Omega)$ 
will also be occasionally adopted instead of $W^{m,A}(\Omega)$; analogous alternate notations will be used  for $W^{m,A}_0(\Omega)$.
\\
If $|\Omega|< \infty$, an iterated use of a Poincar\'e type inequality in Orlicz spaces \cite[Lemma 3]{Talenti90} ensures that the functional 
$$
	\|\nabla^m u\|_{L^A(\Omega)}
$$
defines a norm on $W_0^{m,A}(\Omega)$ equivalent to  $\|u\|_{W^{m,A}(\Omega)} $.

As in the case of Orlicz spaces, inclusion relations between Orlicz-Sobolev spaces can be described in terms of domination  between the defining Young functions $A$ and $B$. If $|\Omega|< \infty$, then  
\begin{equation}\label{inclusionsob}
W^{m,A}(\Omega) \to W^{m,B}(\Omega)
\quad \bigl[W^{m,A}_0(\Omega) \to W^{m,B}_0(\Omega)\bigr] \quad \hbox{if and only if 
$A$ dominates $B$ near infinity.}
\end{equation}
On the other hand,  
\begin{equation}\label{inclusionsobrn}
W^{m,A}(\rn) \to W^{m,B}(\rn) \quad \hbox{if and only if 
$A$ dominates $B$ globally.}
\end{equation}
A proof of assertions \eqref{inclusionsob} and \eqref{inclusionsobrn} seems not to be available in the literature. We sketch a proof  in Proposition~\ref{P:inclusions}, Section~\ref{proofs main}.

Sobolev and trace embeddings for functions with unrestricted boundary values require some regularity on the ground domain. The class of John domains is known to be essentially the largest where Sobolev type embeddings hold in their strongest form.  A bounded open set $\Omega\subset \rn$ is called a John domain if there exist a constant $c\in (0,1)$ and a point $x_0\in\Omega$ such that for every $x\in\Omega$ there exists a rectifiable curve $\varpi\colon [0,l]\to\Omega$, with $l >0$, parametrized by arclength, such that $\varpi(0)=x$, $\varpi(l)=x_0$, and
$$
    \dist\bigl(\varpi(r),\partial\Omega\bigr) \ge cr\quad  \hbox{for $r\in[0,l]$.}
$$
The class of John domains includes  classical families of open sets, such as that of bounded Lipschitz domains, and that of domains with the cone property.
Recall that a bounded open set $\Omega$ is said to have the cone property if there exists a finite circular cone $\Lambda$ such that each point in $\Omega$ is the vertex of a finite cone contained in $\Omega$ and congruent to $\Lambda$.

\subsection{Reduction principles}

A key ingredient in our approach is the use of so-called reduction principles for Sobolev type embeddings. They assert that a wide class of Sobolev and trace inequalities, including those considered in this paper, are in fact equivalent to considerably simpler one-dimensional inequalities for suitable Hardy type operators. The relevant operators are defined as
\begin{equation}\label{Hab}
    H_{\alpha, \beta} f (s)
        = \int_{s^\beta}^{1} f(r)\,r^{\alpha - 1}\,\d r
        \quad \text{for $s>0$}
\end{equation}
for any function $f\in \mathcal M(0,1)$ making the integral in \eqref{Hab} converge. The exponents $\alpha $ and $\beta$ satisfy the constraints $0< \alpha <1$, $0<\beta<\infty$ and $\alpha+1/\beta \ge 1$, and depend on the Sobolev inequality in question.

 Given any open set
$\Omega \subset \rn$ with $|\Omega |<\infty$, embedding \eqref{sob0} is equivalent to the inequality
\begin{equation}
	\|u\|_{L^B(\Omega)}\leq C_1  \|\nabla ^mu\|_{L^{A}(\Omega)}
	\label{sobineq0}
\end{equation}
for some constant $C_1$ and for every $u\in W^{m,A}_0(\Omega)$.
The pertinent  reduction principle asserts that inequality \eqref{sobineq0} holds if and only if 
\begin{equation}
	\|H_{\frac mn, 1}f\|_{L^B(0,1)}\leq C_2  \|f\|_{L^A(0,1)}
	\label{hardy0}
\end{equation}
for some constant $C_2$, and for every nonnegative $f\in L^A(0,1)$. See %\citep[Theorem~3.1]{EKP}, and also    
\citep[Proof of Theorem 1]{Cianchi:Indiana} for $m=1$, and \citep[Theorem~A]{KP}  and \citep[Theorem~6.1]{CPS} for arbitrary $m$. Moreover,
the constants $C_1$ and $C_2$ depend on each other, and on $n$, $m$ and
$|\Omega|$.

Embedding \eqref{sob} in a John domain $\Omega$ amounts to the inequality 
\begin{equation}
	\|u\|_{L^B(\Omega)}\leq C_1  \|u\|_{W^{m,A}(\Omega)}
	\label{sobineqjohn}
\end{equation}
for every $u\in W^{m,A}(\Omega)$. Inequality  \eqref{sobineqjohn} is again equivalent to \eqref{hardy0} (\citep[Proof of Theorem 2]{Cianchi:Indiana} for $m=1$, and  \cite[Theorem~6.1]{CPS} for any $m$). However, in this case the mutual dependence of  the constants $C_1$ and $C_2$ involves full information on $\Omega$, and not just on $|\Omega|$. 

A characterization of embeddings on the whole   $\rn$ requires a combination of the Hardy inequality \eqref{hardy0}, which only depends on the behavior of the functions $A$ and $B$ near infinity, with a condition on their decay near zero. Specifically, the inequality
\begin{equation}
	\|u\|_{L^B(\rn)}\leq C  \|u\|_{W^{m,A}(\rn)}
	\label{sobineqrn}
\end{equation}
holds for some constant $C$, and for every $u\in W^{m,A}(\rn)$ if and only if  inequality  \eqref{hardy0} holds, and 
\begin{equation}\label{dic21}
\hbox{$A$ dominates $B$ near zero, }
\end{equation}
see \cite{ACPS}.
\par
The reduction principle for  embedding \eqref{sobtrace} into Orlicz spaces,  with respect to Frostman measures, 
applies to bounded Lipschitz domains $\Omega$ in $\rn$. It provides us with a sufficient condition for the validity of 
\eqref{sobtrace} in terms of an appropriate Hardy type inequality, and it is also necessary if the decay in \eqref{E:sup} is sharp, in the sense that there exist $x_0\in \bar \Omega$ and positive constants $c$ and $R>0$ such that
\begin{equation}\label{E:inf}
\mu (B_r(x_0))\cap \bar \Omega) \geq c r^\gamma \quad \hbox{if $0<r<R$.}
\end{equation}
The relevant principle asserts that, if  \eqref{E:sup} and \eqref{E:inf} are in force for some $\gamma \in [n-m, n]$, then the inequality
\begin{equation}
	\|u\|_{L^B(\bar \Omega, \mu)}\leq C_1  \|u\|_{W^{m,A}(\Omega)}
	\label{traceineq}
\end{equation}
holds for some constant $C_1$ and for every $u\in W^{m,A}(\Omega)$ if and only if
\begin{equation}
	\|H_{\frac mn, \frac n\gamma}f\|_{L^B(0,1)}\leq C_2  \|f\|_{L^A(0,1)}
	\label{hardytrace}
\end{equation}
 for some constant $C_2$, and for every nonnegative $f\in L^A(0,1)$.  The constants $C_1$ and $C_2$ depend on each other, and on $n$, $m$, $\gamma$, $\Omega$ and on the constants appearing in \eqref{E:sup} and \eqref{E:inf}. The equivalence of  inequalities \eqref{traceineq} and  \eqref{hardytrace} is established in \cite{CPStrace}. Let us mention that the special case when $\mu$ is the $(n-1)$-dimensional Hausdorff measure on $\partial \Omega$ is treated in \cite{CKP}. The case when $\gamma \in \mathbb N$, and $\mu$ is the $\gamma$-dimensional Hausdorff measure restricted to a $\gamma$-dimensional affine subspace of $\rn$ is dealt with in \cite{CianchiPick:TAMS}.

\section{Main results}\label{main}

Let us begin by considering embedding \eqref{sob0}. As a  preliminary observation, 
note that, when
\begin{equation}\label{finite1}
 m \geq n,
 \end{equation}
the optimal Orlicz-Sobolev domain  $W^{m,A}_0(\Omega)$  in \eqref{sob0} corresponds to the choice
\begin{equation*}
A(t) = t \quad \text{for $t \geq 0$,}
\end{equation*}
namely
\begin{equation*}
W^{m,A}_0(\Omega) = W^{m,1}_0(\Omega).
\end{equation*}
Indeed, under assumption \eqref{finite1}, one classically has
\begin{equation*}
W^{m,1}_0(\Omega) \to L^\infty (\Omega),
\end{equation*}
whence the optimality of $W^{m,1}_0(\Omega)$ follows,  since
\begin{equation}\label{trivial3}
W^{m,A}_0(\Omega) \to  W^{m,1}_0(\Omega) \to L^\infty (\Omega) \to
L^B(\Omega)
\end{equation}
for any Young functions $A$ and $B$.
\par
We may thus restrict our attention to the case when $$1 \leq m <n.$$
In this circumstance, the existence of an optimal Orlicz-Sobolev
space $W^{m,A}(\Omega)$ in \eqref{sob0} is not guaranteed anymore.
Our first main result asserts that 
the existence of such an optimal space depends on the local upper Boyd index of 
the Young function
$B_n$ given by
\begin{equation}\label{A}
	B_n(t) = \int _0^t \frac {G_n^{-1}(s)}{s} \d s
		\quad \text{for $t \geq 0$,}
\end{equation}
where $G_n\colon [0, \infty ) \to [0, \infty)$ is defined as
\begin{equation*}
	 G_n(t) = t \inf_{1\leq s\le t} B^{-1}(s)\,s^{\frac{m}{n}-1}
	 \quad \text{for $t \geq 1$}.
\end{equation*}
and  $ G_n(t) = t B^{-1}(1)$ for $t \in [0, 1)$. Moreover, whenever it exists, the function $A$ in the optimal domain space  in  \eqref{sob0} equals $B_n$.

\begin{remark}\label{rem:AG}
Observe that the function $G_n$ is increasing, as shown via the alternate formula  
\begin{equation*}
	G_n(t) = t^{\frac{m}{n}} \inf_{1\le s<\infty}
		B^{-1}(s)\max\bigl\{ 1, \tfrac ts \bigr\}^{1-\frac{m}{n}}
		\quad\text{for $t\ge 1$,}
\end{equation*}
and hence its inverse $G_n^{-1}$ is well-defined.
\\
Also, the function $B_n$ is actually a Young function. Indeed,  since $G_n$ is
increasing,   $G_n^{-1}$ is
increasing as well. Thus, since the function $G_n(t)/t$ is non-increasing, the
function $G_n^{-1}(t)/t$ is non-decreasing. These facts also ensure that $B_n$
is equivalent to $G_n^{-1}$ globally.
\end{remark}

\begin{theorem}[Optimal Orlicz-Sobolev domain under vanishing boundary conditions] \label{thm:W0}
Let $n \geq 2$ and $1 \leq m <n$, and let $B$ be a Young function. Let $B_n$ be
the Young function defined by \eqref{A}.  Assume that $\Omega$ is an open set
in $\mathbb R^n$ with $|\Omega|< \infty$. If
\begin{equation}
\label{eq:Johnindex}
	I_{B_n} < \frac{n}{m},
\end{equation}
then
\begin{equation}\label{emb0bis}
	 W^{m, B_n}_0(\Omega) \to L^B(\Omega),
\end{equation}
and $W^{m,B_n}_0(\Omega)$ is the  optimal Orlicz-Sobolev domain space in \eqref{emb0bis}.
\\
Conversely, if \eqref{eq:Johnindex} fails, then no optimal Orlicz-Sobolev domain space  exists in \eqref{sob0}, in the sense that  any Orlicz-Sobolev space $W^{m,A}_0(\Omega)$ for which embedding \eqref{sob0} holds can be replaced with   a strictly larger Orlicz-Sobolev space for which \eqref{sob0} is still true.
\\
In particular, if $i_B  > \textstyle\frac{n}{n-m}$, then condition
\eqref{eq:Johnindex} is equivalent to $I_B < \infty$, and
\begin{equation}
	B_n^{-1}(t)
		\simeq B^{-1}(t)\,t^{\frac mn}
		\quad \text{near infinity.}
	\label{oct2}
\end{equation}
\end{theorem}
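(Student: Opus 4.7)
The plan is to apply the reduction principle recalled in the Background, converting embedding \eqref{sob0} into the one-dimensional Hardy inequality \eqref{hardy0}, and then to identify $B_n$ as the smallest Young function $A$ (in the sense of domination near infinity) for which that Hardy inequality holds. For the sufficiency direction, I would verify directly that $\|H_{m/n,1} f\|_{L^B(0,1)} \le C \|f\|_{L^{B_n}(0,1)}$ under the hypothesis $I_{B_n} < n/m$. By Remark~\ref{rem:AG}, $B_n^{-1}\simeq G_n$ globally. The H\"older inequality \eqref{holder} applied inside the integral $H_{m/n,1} f(s)$ reduces the task to estimating the $L^{\widetilde{B_n}}$-norm of the kernel $\chi_{(s,1)}(\cdot)(\cdot)^{m/n-1}$, and the definition of $G_n$ is precisely tailored so that this norm is comparable to $1/B^{-1}(1/s)$; the Boyd-index hypothesis $I_{B_n} < n/m$ then yields, via \eqref{march1}, the boundedness of the resulting integral operator from $L^{B_n}(0,1)$ into $L^B(0,1)$.

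For the optimality direction, if $W^{m,A}_0(\Omega) \to L^B(\Omega)$ for some Young function $A$, then \eqref{hardy0} holds; testing this inequality on scaled characteristic functions $f = \chi_{(0,s)}$, and computing $\|H_{m/n,1}f\|_{L^B(0,1)}$ explicitly so that the lower bound involves $s^{m/n}/B^{-1}(1/s)$, forces the pointwise estimate $A^{-1}(t) \lesssim B^{-1}(t)\,t^{m/n}$ near infinity; a refinement obtained by taking infima over the appropriate range of parameters, mirroring the one defining $G_n$, then produces the sharper bound $A^{-1}(t) \lesssim G_n(t)$, so that $A$ dominates $B_n$ near infinity and \eqref{inclusionsob} yields $W^{m,A}_0(\Omega) \to W^{m,B_n}_0(\Omega)$.

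For the failure case $I_{B_n} \ge n/m$, I would first observe that the Hardy inequality with $A = B_n$ itself breaks down, since $I_{B_n} < n/m$ is necessary for the relevant integral operator to act between $L^{B_n}$ and $L^B$. Given any admissible $A$ (for which \eqref{sob0} holds), one then has $A$ strictly dominating $B_n$ in a quantitative sense; the plan is to exploit this gap by constructing a Young function $\widehat A$ strictly dominated by $A$ near infinity, yet still quantitatively larger than $B_n$, for which \eqref{hardy0} still holds. This yields a strict Orlicz-Sobolev enlargement $W^{m,\widehat A}_0(\Omega) \supsetneq W^{m,A}_0(\Omega)$ still embedded into $L^B(\Omega)$, ruling out optimality. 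The ``in particular'' clause is a direct Boyd-index calculation: the hypothesis $i_B > n/(n-m)$ translates, via \eqref{pre:BoydIndex} and the monotonicity observation preceding \eqref{march1}, into the fact that $B^{-1}(s)\,s^{m/n-1}$ is equivalent near infinity to a non-decreasing function, so the infimum defining $G_n(t)$ is essentially attained at $s=t$ for large $t$, yielding $G_n(t) \simeq B^{-1}(t)\,t^{m/n}$ near infinity and hence \eqref{oct2} via Remark~\ref{rem:AG}; a further Boyd-index computation using \eqref{oct2} then shows that the condition $I_{B_n} < n/m$ reduces to $I_B < \infty$.

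I expect the main obstacle to be the non-existence argument when \eqref{eq:Johnindex} fails: engineering, for every admissible $A$, a strict enlargement $\widehat A$ that still satisfies \eqref{hardy0} requires a quantitative, stability-type analysis of the Hardy inequality under perturbations of the Young function, and in particular a careful tracking of how much room is left between $A$ and the (inadmissible) extremal candidate $B_n$. The sufficiency and optimality portions are more standard reductions to well-understood one-dimensional Hardy estimates governed by Boyd indices, while the final clause is essentially a direct Boyd-index computation.
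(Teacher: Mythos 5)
Your overall architecture is the right one: reduce \eqref{sob0} to the Hardy inequality \eqref{hardy0}, show $L^{B_n}(0,1)$ is the optimal Orlicz domain for $H_{m/n,1}$, then translate back via \eqref{inclusionsob}. That is indeed what the paper does. But the sufficiency step, which you treat as routine, is exactly where the paper has to work hardest, and your version of it has a genuine gap (and a computational error) that make it fail.

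The H\"older bound you propose gives a \emph{pointwise} estimate
$|H_{m/n,1}f(s)|\le 2\,\|f\|_{L^{B_n}(0,1)}\,\bigl\|\chi_{(s,1)}(\cdot)(\cdot)^{m/n-1}\bigr\|_{L^{\widetilde{B_n}}(0,1)}$.
Call the kernel norm $K(s)$. Your claim that $K(s)\simeq 1/B^{-1}(1/s)$ is inverted; the correct comparison is $K(s)\simeq B^{-1}(1/s)$, which grows as $s\to 0^+$. (Check $B(t)=t^q$ with $q>\frac{n}{n-m}$: one finds $K(s)\simeq s^{-1/q}=B^{-1}(1/s)$.) Consequently $K\notin L^B(0,1)$ in general (in the power example $\int_0^1 K^q\,ds=\int_0^1 s^{-1}\,ds=\infty$), so the pointwise bound cannot be integrated against $B$ to yield $\|H_{m/n,1}f\|_{L^B}\lesssim\|f\|_{L^{B_n}}$. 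What H\"older actually gives is that $(H_{m/n,1}f)^*(s)\lesssim B^{-1}(1/s)\,\|f\|_{L^{B_n}}$, i.e.\ only a weak-type (Marcinkiewicz) estimate. Upgrading this to a strong $L^B$ estimate is precisely the non-trivial content of Lemma~\ref{lemm:HOrlOrl} and Corollary~\ref{cor:HOrlOrl} in the paper: one rescales the Young functions ($A_N$, $B_N$ as in \eqref{ABL}), passes to a modular weak inequality \eqref{eq:weakIneq2}, and then sums over a dyadic decomposition of $Hf$, using the crucial exponent inequality $\alpha+1/\beta\ge 1$. Without this weak-to-strong self-improvement your sufficiency argument does not close, and the Boyd-index hypothesis \eqref{eq:Johnindex} never actually enters your sketch of it.

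The optimality direction of your sketch is essentially fine. Testing \eqref{hardy0} on $\chi_{(0,s)}$ yields $A^{-1}(t)\lesssim B^{-1}(t)\,t^{m/n}$ near infinity, and the passage to $A^{-1}(t)\lesssim G_n(t)$ does not need any further ``infima over parameters'': it is a consequence of $A^{-1}(u)/u$ being non-increasing for a Young function $A$, combined with the domination at the minimizing $s_0\le t$. Stating that explicitly would remove the vagueness.

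The non-existence half, which you rightly flag as the main obstacle, is where the paper leans on an external result. After the weak-to-strong reduction, the paper invokes Musil's Theorem~A, which characterizes exactly when an optimal Orlicz domain exists for the weak-type Hardy inequality in terms of the integral condition of Proposition~\ref{prop:Bconditions}, equivalent to $I_{B_n}<n/m$. Your ``construct a $\widehat A$ strictly dominated by $A$ but still admissible'' program is in spirit what that theorem establishes, but as written you neither identify a mechanism for producing $\widehat A$ nor explain why failure of \eqref{eq:Johnindex} guarantees the required ``gap'' for \emph{every} admissible $A$. You would essentially need to reprove Musil's Theorem~A here.

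Finally, the ``in particular'' clause. Your monotonicity claim is reversed: $i_B>\frac{n}{n-m}$ is related to $B^{-1}(s)\,s^{m/n-1}$ being (morally) non-\emph{increasing} near infinity, which is why the infimum in $G_n$ is at $s=t$. But more importantly, deducing $\inf_{1\le s\le t}B^{-1}(s)s^{m/n-1}\simeq B^{-1}(t)t^{m/n-1}$ from the \emph{strict} index inequality $i_B>\frac{n}{n-m}$ is not a mere monotonicity observation; it is Lemma~\ref{lemm:infout}, whose proof goes through the conjugate $\widetilde B$, the $\Delta_2$-condition, and the integral characterizations in Proposition~\ref{prop:Bconditions}. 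Likewise, the reduction of $I_{B_n}<n/m$ to $I_B<\infty$ requires the Boyd-index identity $\frac{1}{I_{B_n}}=\frac mn+\frac{1}{I_B}$ of Lemma~\ref{lemm:indices}, not a ``direct computation'' from \eqref{oct2} alone.
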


Under a mild additional assumption on the decay of $B$ near $0$, which reads
\begin{equation} \label{Bz}
	\inf_{0<t<1} \frac{B(t)}{t^\frac{n}{n-m}} >0\,,
\end{equation}
embedding \eqref{emb0bis} is equivalent to a Sobolev inequality in integral form. Inequalities in this form are usually better suited for applications to the theory of partial differential equations. The relevant integral inequality requires a slight variant in the definition near $0$ of the Young function in the optimal Orlicz-Sobolev domain. This function will be denoted by $ B_n^\infty$, and is defined as  
\begin{equation}\label{Aglob}
	 B_n^\infty(t) = \int _0^t \frac {{G_n^\infty}^{-1}(s)}{s} \d s
		\quad \text{for $t \geq 0$,}
\end{equation}
where
\begin{equation*}
	 G_n^\infty(t) = t \inf_{0<s\le t} B^{-1}(s)\,s^{\frac{m}{n}-1}
	 \quad \text{for $t \geq 0$}.
\end{equation*}
Note that condition \eqref{eq:Johnindex} on the local upper Boyd index of $B_n$ has now to be replaced with a parallel condition on the global upper Boyd index of $B_n^\infty$.

\begin{corollary} \label{cor:W0}
Let $n$, $m$ and $\Omega$ be as in Theorem \ref{thm:W0}.  Let $B$ be a Young function
satisfying \eqref{Bz}, and let $B_n^\infty$ be
the Young function defined by \eqref{Aglob}.   If
\begin{equation} \label{eq:Johnindexglobal}
	I_{B_n^\infty}^\infty < \frac{n}{m},
\end{equation}
then there exists a constant~$C$ such that
\begin{equation}
	\int_\Omega B\Bigg(
			\frac{|u(x)|} {C\bigl(\int _\Omega  B_n^\infty(|\nabla ^m u|)\,\d y\bigr)^{m/n}}
		\Bigg)\,\d x
		\leq \int_\Omega  B_n^\infty(|\nabla ^m u|)\,\d x
	\label{oct11}
\end{equation}
for every $u \in W^{m, B_n^\infty}_0(\Omega)$.
\\
In particular, if $i_B^\infty > \textstyle\frac{n}{n-m}$, then condition
\eqref{eq:Johnindexglobal} is equivalent to $I_B^\infty < \infty$, and
\begin{equation*}
	 {B_n^\infty}^{-1}(t)
		\simeq B^{-1}(t)\,t^{\frac mn}
		\quad \text{for $t\ge 0$.}
\end{equation*}
\end{corollary}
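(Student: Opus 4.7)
The plan is to derive the integral inequality~\eqref{oct11} from an Orlicz-Sobolev norm inequality supplied by the reduction principle on~$\rn$ applied to the Young function $B_n^\infty$. As a preliminary step, under hypothesis~\eqref{Bz} the function $B^{-1}(s)\,s^{m/n-1}$ is bounded above on $(0,1]$, so that $G_n^\infty$ is well defined and $B_n^\infty$ is equivalent globally to $(G_n^\infty)^{-1}$ by the same reasoning as in Remark~\ref{rem:AG}. In particular, $B_n^\infty$ behaves at worst linearly near zero, whereas $B(t)\ge c\,t^{n/(n-m)}$ near zero by~\eqref{Bz} with $n/(n-m)>1$, so that $B_n^\infty$ dominates $B$ near zero. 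This supplies the near-zero domination condition~\eqref{dic21} demanded by the reduction principle on~$\rn$.

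Next, I would verify the one-dimensional Hardy inequality~\eqref{hardy0} with $A=B_n^\infty$ using~\eqref{eq:Johnindexglobal} as the Boyd-index input. This is the global analogue of the step carried out in Theorem~\ref{thm:W0} with the local index $I_{B_n}<n/m$; the difference is that a global control is required to run the Hardy argument on the whole positive axis, which is what the condition~\eqref{eq:Johnindexglobal} on $I_{B_n^\infty}^\infty$ provides. Combining this Hardy inequality with the near-zero domination through the reduction principle on $\rn$ yields the norm-form embedding
\[
\|u\|_{L^B(\rn)}\le C\,\|\nabla^m u\|_{L^{B_n^\infty}(\rn)},
\]
for every $u\in W^{m,B_n^\infty}(\rn)$, and hence the same inequality on $\Omega$ for any $u\in W^{m,B_n^\infty}_0(\Omega)$ extended by zero.

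The most delicate step will be the conversion of this Luxemburg-norm inequality into the integral form~\eqref{oct11}. The bound $I_{B_n^\infty}^\infty<n/m$ translates into a quantitative comparison between the Luxemburg norm $\|\nabla^m u\|_{L^{B_n^\infty}}$ and the quantity $\bigl(\int_\Omega B_n^\infty(|\nabla^m u|)\,\d x\bigr)^{m/n}$, the exponent $m/n$ emerging precisely because $n/m$ is the critical Boyd-index value. Plugging this comparison into the norm inequality above, and unpacking the Luxemburg definition of $\|u\|_{L^B(\Omega)}$, yields~\eqref{oct11}, with a constant depending only on~$n$, $m$, and the constants in the Boyd-index comparison.

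For the concluding \emph{in particular} statement, the hypothesis $i_B^\infty>n/(n-m)$ is equivalent, via the Boyd-index criterion recalled in Section~\ref{back}, to the function $B^{-1}(s)\,s^{-(n-m)/n}=B^{-1}(s)\,s^{m/n-1}$ being globally equivalent to a non-increasing function. Hence the infimum defining $G_n^\infty(t)$ is realised at $s=t$ up to multiplicative constants, so $G_n^\infty(t)\simeq B^{-1}(t)\,t^{m/n}$ globally; combined with $B_n^\infty\simeq(G_n^\infty)^{-1}$, this gives $(B_n^\infty)^{-1}(t)\simeq B^{-1}(t)\,t^{m/n}$. A routine Boyd-index computation based on this explicit formula then yields $I_{B_n^\infty}^\infty=nI_B^\infty/(mI_B^\infty+n)$, which is strictly less than $n/m$ precisely when $I_B^\infty<\infty$. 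The main obstacle I anticipate is the third paragraph's norm-to-integral conversion: the exact power $m/n$ on the modular in~\eqref{oct11} has to emerge correctly from the global Boyd-index bound, and this is where the distinction between the local hypothesis~\eqref{eq:Johnindex} of Theorem~\ref{thm:W0} and the global hypothesis~\eqref{eq:Johnindexglobal} of the present corollary becomes essential.
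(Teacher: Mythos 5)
The central step of your argument---converting the single Luxemburg-norm inequality $\|u\|_{L^B(\Omega)}\le C\|\nabla^m u\|_{L^{B_n^\infty}(\Omega)}$ into the modular inequality \eqref{oct11} by means of ``a quantitative comparison between $\|\nabla^m u\|_{L^{B_n^\infty}}$ and $\bigl(\int_\Omega B_n^\infty(|\nabla^m u|)\,\d x\bigr)^{m/n}$''---is a genuine gap. For a general Young function the Luxemburg norm is not equivalent to any fixed power of the modular: when $B_n^\infty(t)\simeq t$, which is perfectly compatible with $I_{B_n^\infty}^\infty=1<n/m$, the comparison you need would read $\int_\Omega|\nabla^m u|\,\d x \lesssim \bigl(\int_\Omega|\nabla^m u|\,\d x\bigr)^{m/n}$ for all $u$, which fails as soon as the integral is large. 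No Boyd-index hypothesis can supply such a comparison, because a single norm inequality for the fixed pair $(B_n^\infty,B)$ simply does not encode \eqref{oct11}: the modular inequality is equivalent to a whole \emph{family} of norm inequalities, for rescaled Young functions, holding with a constant independent of the scaling parameter. (Even if your comparison held, unpacking the Luxemburg norm would give $\int_\Omega B\bigl(|u|/(CN^{m/n})\bigr)\,\d x\le 1$ rather than $\le N$, which is not \eqref{oct11} when $N<1$.)

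That uniform family is exactly what the paper's proof produces, and it is where \eqref{eq:Johnindexglobal} is really used. By Proposition~\ref{prop:Bconditions}, \eqref{eq:Johnindexglobal} is equivalent to the integral condition \eqref{BDind} for $\widetilde{B_n^\infty}$, valid for all $t>0$; setting $B_N(t)=B\bigl(tN^{-m/n}\bigr)/N$ one has $(B_N)_n^\infty=B_n^\infty/N$, and \eqref{BDind} is invariant under this rescaling with the same constant. Proposition~\ref{prop:GWE} and Lemma~\ref{lemm:HOrlOrl} then give the Hardy inequality $\|H_{\frac mn,1}f\|_{L^{B_N}(0,1)}\le C_2\|f\|_{L^{(B_N)_n^\infty}(0,1)}$ with $C_2$ independent of $N$, and the reduction principle for \eqref{sobineq0} yields $\|u\|_{L^{B_N}(\Omega)}\le C\|\nabla^m u\|_{L^{(B_N)_n^\infty}(\Omega)}$ uniformly in $N$; choosing $N=\int_\Omega B_n^\infty(|\nabla^m u|)\,\d y$ forces $\|\nabla^m u\|_{L^{(B_N)_n^\infty}(\Omega)}\le 1$, hence $\int_\Omega B_N(|u|/C)\,\d x\le 1$, which is precisely \eqref{oct11}. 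The exponent $m/n$ thus emerges from the chosen rescaling of $B$, not from a norm--modular comparison. Two smaller remarks: the detour through the reduction principle on $\rn$ (and the near-zero domination you extract from \eqref{Bz}) is not needed for functions in $W_0^{m,B_n^\infty}(\Omega)$ with $|\Omega|<\infty$, and boundedness \emph{above} of $B^{-1}(s)\,s^{m/n-1}$ near zero does not by itself keep the infimum defining $G_n^\infty$ away from zero, which is the actual role of the hypothesis on $B$ near the origin. Your concluding ``in particular'' paragraph is fine and coincides with Lemmas~\ref{lemm:infout} and~\ref{lemm:indices}.
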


Companion results to Theorem \ref{thm:W0} and Corollary \ref{cor:W0} hold for embedding \eqref{sob} between spaces of functions with unrestricted boundary values, provided that $\Omega$ is a John domain. Like for embedding \eqref{sob0}, the only non-trivial case is when $1 \leq m <n$. Indeed,   if $m \geq n$,   the same chain as in \eqref{trivial3} holds with $W^{m,A}_0(\Omega)$ and $W^{m,1}_0(\Omega)$ replaced by $W^{m,A}(\Omega)$ and $W^{m,1}(\Omega)$, respectively, and hence $W^{m,1}(\Omega)$ is the optimal Orlicz-Sobolev domain space in \eqref{sob}.

\begin{theorem}[Optimal Orlicz-Sobolev domain without boundary conditions] \label{thm:domainJohn}
	Let $n \geq 2$ and $1 \leq m <n$, and let $B$ be a Young
	function. Assume that $\Omega$ is a John  domain in $\rn$.
	Let $B_n$ be the Young function defined by \eqref{A}.  If
	\eqref{eq:Johnindex} holds, then
	 \begin{equation}
	 	\label{johnemb}
		W^{m, B_n}(\Omega) \to L^B(\Omega),
	\end{equation}
	and $W^{m,B_n}(\Omega)$ is the optimal Orlicz-Sobolev domain space
	in \eqref{johnemb}.
    \\
Conversely, if \eqref{eq:Johnindex} fails, then no optimal Orlicz-Sobolev domain space  exists in \eqref{sob}, in the sense that  any Orlicz-Sobolev space $W^{m,A}(\Omega)$ for which embedding \eqref{sob} holds can be replaced with   a strictly larger Orlicz-Sobolev space for which \eqref{sob} is still true.
\\
In particular, if $i_B > \textstyle\frac{n}{n-m}$, then condition \eqref{eq:Johnindex} is equivalent to $I_B < \infty$, and
\begin{equation*}
    B_n^{-1}(t)
        \simeq B^{-1}(t)\,t^{\frac mn}
        \quad \text{near infinity}.
\end{equation*}
\end{theorem}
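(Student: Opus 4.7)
The plan is to deduce Theorem~\ref{thm:domainJohn} from Theorem~\ref{thm:W0} via the reduction principle for John domains. Since $\Omega$ is John, inequalities \eqref{sobineqjohn} and \eqref{hardy0} are equivalent, while \eqref{sobineq0} is also equivalent to \eqref{hardy0}. Consequently, for every Young function $A$,
\begin{equation*}
W^{m,A}(\Omega) \to L^B(\Omega)
\quad \Longleftrightarrow \quad
W^{m,A}_0(\Omega) \to L^B(\Omega).
\end{equation*}
The direction ``$\Rightarrow$'' is immediate from $W^{m,A}_0(\Omega)\subseteq W^{m,A}(\Omega)$; the direction ``$\Leftarrow$'' is obtained by passing both embeddings through the common Hardy inequality \eqref{hardy0}. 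Call this equivalence $(\star)$ for brevity.

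\textbf{Existence part.} Assume \eqref{eq:Johnindex}. Theorem~\ref{thm:W0} delivers \eqref{emb0bis}, and $(\star)$ with $A=B_n$ yields \eqref{johnemb}. For optimality, let $A$ be any Young function with $W^{m,A}(\Omega)\to L^B(\Omega)$; then $(\star)$ gives $W^{m,A}_0(\Omega)\to L^B(\Omega)$, whence the optimality clause of Theorem~\ref{thm:W0} supplies $W^{m,A}_0(\Omega)\to W^{m,B_n}_0(\Omega)$. By the inclusion criterion \eqref{inclusionsob}, this is equivalent to $A$ dominating $B_n$ near infinity, and the same criterion now produces $W^{m,A}(\Omega)\to W^{m,B_n}(\Omega)$, establishing the optimality of $W^{m,B_n}(\Omega)$.

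\textbf{Non-existence part.} Suppose \eqref{eq:Johnindex} fails, and let $A$ be any Young function with $W^{m,A}(\Omega)\to L^B(\Omega)$. By $(\star)$, $W^{m,A}_0(\Omega)\to L^B(\Omega)$. The non-existence clause of Theorem~\ref{thm:W0} then furnishes a Young function $\widehat A$, not dominated by $A$ near infinity, such that $W^{m,\widehat A}_0(\Omega)\to L^B(\Omega)$. A second application of $(\star)$ gives $W^{m,\widehat A}(\Omega)\to L^B(\Omega)$, and the criterion \eqref{inclusionsob} shows that $W^{m,A}(\Omega)$ is a strictly smaller subspace of $W^{m,\widehat A}(\Omega)$, proving the non-existence. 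The concluding ``in particular'' clause concerns only the asymptotic shape of $B_n$ near infinity, which depends solely on $B$, $n$ and $m$; it is therefore identical to the corresponding assertion in Theorem~\ref{thm:W0} and requires no additional argument.

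\textbf{Main obstacle.} There is no substantive obstacle beyond establishing $(\star)$: once the reduction principle identifies the validity of \eqref{sob} and \eqref{sob0} with the same Hardy inequality \eqref{hardy0}, the entire argument rides on Theorem~\ref{thm:W0} and on the inclusion criterion \eqref{inclusionsob}. The one mild point of care is that the constants in the John-domain reduction principle depend on the full geometry of $\Omega$, while in the zero-trace case they depend only on $|\Omega|$, but this distinction is invisible to the qualitative optimality and non-existence statements at issue.
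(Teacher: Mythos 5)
Your argument is correct and essentially coincides with the paper's: the paper proves this theorem by the very same route, namely the John-domain reduction principle equating \eqref{sobineqjohn} with \eqref{hardy0} followed by the optimality analysis of Theorem~\ref{thm:W0} (Lemma~\ref{lemm:OptimalityH} together with \eqref{inclusionsob}), which is exactly what your equivalence $(\star)$ packages. The only nit is a wording slip in the non-existence part: the strictly larger space furnished by Theorem~\ref{thm:W0} corresponds to $\widehat A$ being dominated by, but not dominating, $A$ near infinity -- which is in fact what your final application of \eqref{inclusionsob} uses, so nothing breaks.
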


\begin{remark}
An integral inequality analogous to \eqref{oct11}, corresponding to embedding \eqref{johnemb}, holds under assumption \eqref{Bz}, and with $B_n$ replaced by $B_n^\infty$.
\end{remark}

\begin{example}
\label{ex:LlogL} Consider the case when $L^B(\Omega)$ is a Zygmund space of the form $L^q(\log L)^\alpha (\Omega)$, where either $q \in (1, \infty)$ and $\alpha \in \mathbb R$, or $q=1$ and $\alpha \geq 0$.
Assume that $1 \leq m <n$, the only nontrivial case in view of the  discussion above.
Computations show that

\begin{equation*}
	B_n(t) \,\,\text{is equivalent to}\,
    \begin{cases}
        t^{\frac{nq}{n+mq}}\, (\log t)^{\frac{n\alpha}{n+mq}}
        & \text{if $q>\frac{n}{n-m}$, $\alpha \in\R$,} \\
        t\, (\log t)^{\alpha(1-\frac{m}{n})}
        & \text{if $q=\frac{n}{n-m}$, $\alpha > 0$,} \\
        t & \text{otherwise,}
    \end{cases}
\end{equation*}
near infinity. Moreover,
\begin{equation*}
	I_{B_n} = 
    \begin{cases}
    	\frac{nq}{n+mq}
        & \text{if $q>\frac{n}{n-m}$, $\alpha \in\R$,} \\
        1
        & \text{otherwise,}
    \end{cases}
\end{equation*}
whence $I_{B_n} < n/m$. Therefore, by Theorem~\ref{thm:W0},
\begin{equation}\label{41a}
    \begin{rcases}
        \text{if   $q>\frac{n}{n-m}$, $\alpha \in\R$,} \quad \quad \quad & W^{m}_0L^{\frac{nq}{n+mq}}(\log L)^{\frac{n\alpha}{n+mq}}(\Omega) \\
         \text{if   $q=\frac{n}{n-m}$, $\alpha > 0$,}  \quad \quad \quad &  W^{m}_0L(\log L)^{\alpha(1-\frac{m}{n})}(\Omega)
         \\ 
         \text{otherwise,} \quad \quad \quad & W^{m,1}_0(\Omega)  
    \end{rcases} \to L^q (\log L)^\alpha (\Omega)
\end{equation}	
for any open set $\Omega$ with $|\Omega|<\infty$,  and the domain spaces are optimal  among all Orlicz-Sobolev spaces. By Theorem~\ref{thm:domainJohn}, the same embeddings continue to hold, with optimal domain spaces, for any  John domain $\Omega$, provided that $W^m_0$ is replaced  by $W^m$. 
\\
Let us point out that, by \cite{Cianchi:Forum} (see also \citep{Cianchi:Indiana} for $m=1$), the space $L^q (\log L)^\alpha (\Omega)$ is in turn the optimal Orlicz target space in \eqref{41a}.
Thus, the domain and target spaces    are mutually optimal in \eqref{41a}. 
\end{example}

\begin{example}
We deal here  with  the target  space  $L^q  \exp \sqrt{\log L}(\Omega )$, with  $q\in [1,\infty)$, namely the Orlicz space built upon a Young function $B(t)= t^q e^{\sqrt{\log t}}$ near infinity. 
Assume as above that $1 \leq m<n$. If  $q<\frac{n}{n-m}$, then $B(t)\,t^\frac{n}{m-n} = t^{q+\frac{n}{m-n}}\,e^{\sqrt{\log t}}$ near infinity,  a decreasing function. Thus,
$B^{-1}(s)\,s^{\frac{m}{n}-1}$ is increasing near infinity, and $B_n(t)$ is equivalent to $t$ near infinity.
\\ Suppose next that $q\ge \frac{n}{n-m}$. Then the function $B(t)\,t^\frac{n}{m-n}$ is increasing near infinity, so that
$B^{-1}(s)\,s^{\frac{m}{n}-1}$ is decreasing, and
\begin{equation*}
	B_n^{-1}(t) \simeq  B^{-1}(t)\,t^\frac{m}{n}
\end{equation*}
near infinity.  One can verify that
$$B^{-1}(s) \simeq s^{\frac 1q} e^{- q^{-\frac 32}\sqrt {\log s}}$$
near infinity. Hence, 
$$B_n(t) \quad \text{is equivalent to}\quad t^{\frac{nq}{n+mq}}e^{\left(\frac{n}{n+mq}\right)^{\frac 32}\sqrt{\log t}}
$$
near infinity. In particular, 
$I_{B_n} = \frac{nq}{n+mq} < \frac{n}{m}$. Altogether, 
by Theorem~\ref{thm:W0}, one has that
\begin{equation*}
\begin{rcases}
        \text{if   $q\ge \frac{n}{n-m}$,} \quad \quad \quad & W^{m}_0 L^{\frac{nq}{n+mq}}\exp\bigl(\bigl(\frac{n}{n+mq}\bigr)^{\frac 32}\sqrt{\log L} \,\bigr)  (\Omega) \\
          \text{otherwise,
          } \quad \quad \quad
		 & W^{m,1}_0(\Omega) 
    \end{rcases} \to L^q  \exp \sqrt{\log L}(\Omega )
\end{equation*}
for any  open set $\Omega$ with $|\Omega|<\infty$, and the domain spaces are optimal among all Orlicz-Sobolev spaces.  A parallel result holds in any John domain $\Omega$, with $W^{m}_0$ replaced by $W^m$, owing to Theorem~\ref{thm:domainJohn}.
\end{example}

\begin{example}
If the Young function $B$ grows so fast near infinity that $i_B=\infty$, then it immediately follows from Theorems~\ref{thm:W0} and \ref{thm:domainJohn} that no optimal Orlicz-Sobolev domain space exists in embeddings \eqref{sob0} and \eqref{sob}. This is the case, for instance, when $L^B(\Omega)$ agrees with one of the following  spaces:
\begin{equation*}
   \exp \bigl((\log L)^\alpha\bigr)(\Omega)  \quad \exp \bigl(L^q (\log L)^\beta\bigr) (\Omega),
     \end{equation*}
     or
\begin{equation*}
	\exp L^\beta(\Omega),\,\,
	\exp\bigl(\exp L^\beta\bigr)(\Omega),\,
	\ldots,  \,  \exp\bigl( \cdots ( \exp L^\beta)\bigr)(\Omega),
	\end{equation*}
    or 
     \begin{equation*}
     L^\infty(\Omega)\,,
     \end{equation*}
where $\alpha>1$, $\beta>0$ and $q\in[1,\infty)$.
\end{example}

The next result is a counterpart of Theorem~\ref{thm:domainJohn}
in the case when $\Omega = \mathbb R^n$. The decay near zero of the involved Young functions is also
relevant now. A Young function $\bar B$ obeying
\begin{equation}\label{overB}
			\bar B(t) =
			\begin{cases}
				t & \text{near infinity,}\cr
				B(t) & \text{near zero,}\cr
			\end{cases}
		\end{equation}
and a Young function $\bar B_n$ obeying
\begin{equation}\label{overBD}
			\bar B_n(t) =
			\begin{cases}
				B_n(t) & \text{near infinity,}\cr
				B(t) & \text{near zero}\cr
			\end{cases}
		\end{equation}
come into play in the present situation.
\\ Let us stress that,  if $m \geq n$,
then the answer to the optimal domain problem is still easier than
in the case when $1 \leq m <n$, but not as trivial as when $\Omega$ is a John
domain, since the  optimal domain space is not
just $W^{m,1}(\rn)$ in general.

\begin{theorem} [Optimal Orlicz-Sobolev domain on $\rn$] \label{thm:domainRn}
Let $n \geq 2$ and $m \in \mathbb N$, and let $B$ be a Young function.
\\  {\rm (i)} Assume that $m \geq n$. Let $\bar B$ be a Young function satisfying \eqref{overB}.  Then
\begin{equation}\label{easyrn}
    W^{m,\bar B} (\rn) \to L^B(\rn)\,,
\end{equation}
and $W^{m,\bar B} (\rn)$ is the  optimal Orlicz-Sobolev domain space in \eqref{easyrn}. 
\\  {\rm (ii)}   Assume that $1 \leq m <n$.  Let $B_n$ be the Young function defined by \eqref{A}, and  let $\bar B_n$ be a Young function satisfying \eqref{overBD}. If  \eqref{eq:Johnindex} holds, then
\begin{equation} \label{embrn}
    W^{m,\bar B_n} (\rn) \to L^B(\rn)\,,
\end{equation}
and $W^{m,\bar B_n} (\rn)$ is the optimal Orlicz-Sobolev domain space in \eqref{embrn}.
\\
Conversely, if \eqref{eq:Johnindex} fails, then no optimal Orlicz-Sobolev domain space  exists in \eqref{sobrn},  in the sense that  any Orlicz-Sobolev space $W^{m,A}(\rn)$ for which embedding \eqref{sobrn} holds can be replaced with   a strictly larger Orlicz-Sobolev space for which \eqref{sobrn} is still true.
\\
In particular, if $i_B > \textstyle\frac{n}{n-m}$, then condition
\eqref{eq:Johnindex} is equivalent to $I_B < \infty$, and
\begin{equation*}
    \bar B_n^{-1}(t) \simeq
    \begin{cases}
        B^{-1}(t)\, t^{\frac mn} & \text{near infinity,}\cr
        B^{-1}(t) & \text{near zero.}\cr
    \end{cases}
\end{equation*}
\end{theorem}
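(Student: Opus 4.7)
The plan is to reduce to previously established results by combining the $\rn$-specific reduction principle recalled at the end of Section \ref{back} with Theorem \ref{thm:domainJohn}. For $1\leq m<n$ the reduction principle asserts that \eqref{sobrn} is equivalent to the conjunction of (a) the one-dimensional Hardy inequality \eqref{hardy0} and (b) the domination of $B$ by $A$ near zero. Since (a) depends only on the behavior of $A$ and $B$ near infinity whereas (b) depends only on their behavior near zero, the optimal domain problem decouples: I shall optimize in the two regimes independently and then glue.

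For part (ii), I would invoke Theorem \ref{thm:domainJohn}, which (through the reduction principle applied to any bounded John domain) characterizes when the Hardy inequality in (a) admits an optimal Orlicz solution, and identifies that solution as $B_n$ precisely when $I_{B_n}<n/m$. Since the smallest Young function dominating $B$ near zero is $B$ itself, the candidate obtained by gluing is $\bar B_n$, and $W^{m,\bar B_n}(\rn) \to L^B(\rn)$ then follows immediately from the reduction principle, because $\bar B_n$ satisfies both (a) and (b). Conversely, any $A$ for which \eqref{sobrn} holds must, by the reduction principle and Theorem \ref{thm:domainJohn}, dominate $B_n$ near infinity and $B$ near zero, hence $\bar B_n$ globally; \eqref{inclusionsobrn} then yields $W^{m,A}(\rn)\to W^{m,\bar B_n}(\rn)$, establishing the asserted optimality.

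For part (i) with $m \geq n$ the Hardy reduction is not applicable since $m/n \geq 1$, so I would argue directly. To prove the embedding $W^{m,\bar B}(\rn) \to L^B(\rn)$, I would cover $\rn$ by unit balls with bounded overlap and use, on each ball $B$, the inclusion $W^{m,\bar B}(B) \to W^{m,1}(B)$ granted by \eqref{inclusionsob} (since $\bar B$ is equivalent to $t$ near infinity and $|B|<\infty$) followed by the classical Sobolev--Morrey embedding $W^{m,1}(B) \to L^\infty(B)$, available for $m\ge n$. Taking suprema yields a uniform pointwise bound $\|u\|_{L^\infty(\rn)} \leq C \|u\|_{W^{m,\bar B}(\rn)}$. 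Combined with $u\in L^{\bar B}(\rn)$ and the identity $\bar B = B$ near zero, a threshold decomposition then gives $u\in L^B(\rn)$ with control on the norm. For optimality, I would observe that any nontrivial Young function dominates $t$ near infinity (since $A(s)/s$ is non-decreasing), and a test-function argument using cut-offs $\phi_k(x) = t_k\psi(x/R_k)$ with $R_k\to\infty$, combined with \eqref{chiE}, forces $A^{-1}(s)\leq C B^{-1}(s)$ for small $s$; hence $A$ dominates $B$ near zero, and $\bar B$ globally.

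The main obstacle will be the non-existence assertion in part (ii), when $I_{B_n}\geq n/m$. Here I would take any $A$ for which \eqref{sobrn} holds, apply the non-existence conclusion of Theorem \ref{thm:domainJohn} to the near-infinity behavior to produce a Young function $\tilde A$ strictly larger than $A$ near infinity for which (a) still holds, and then define $A'$ to coincide with $\tilde A$ near infinity and with $B$ near zero, glued through a linear segment on a bounded interval so as to remain a Young function. This $A'$ satisfies both (a) and (b), and hence yields a strictly larger Orlicz-Sobolev space for which \eqref{sobrn} is still valid; strictness follows from \eqref{inclusionsobrn} because $A$ does not dominate $A'$ globally. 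Finally, the asymptotic formula for $\bar B_n^{-1}$ in the last assertion is inherited directly from the corresponding relation \eqref{oct2} in Theorem \ref{thm:W0}, together with $\bar B_n = B$ near zero.
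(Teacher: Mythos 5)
Your proposal is correct, and for part (ii) it is essentially the paper's argument: both invoke the $\rn$-reduction principle to decouple the problem into a near-infinity piece (the Hardy inequality \eqref{hardy0}) and a near-zero piece (the domination condition \eqref{dic21}), identify $B_n$ as the optimal Young function near infinity, glue with $B$ near zero to produce $\bar B_n$, and deduce optimality and the non-existence assertion via \eqref{inclusionsobrn}. The only cosmetic difference is that you route the near-infinity optimality through Theorem~\ref{thm:domainJohn}, whereas the paper goes directly to Lemma~\ref{lemm:OptimalityH}; since the former is derived from the latter, this changes nothing of substance. Your expanded discussion of the gluing in the non-existence step (ensuring the modified function $A'$ is a Young function that satisfies both (a) and (b)) spells out a detail the paper leaves implicit, which is a useful clarification rather than a gap.

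For part (i), however, you take a genuinely different route. The paper treats $m\ge n$ through the same reduction principle, observing that when $m/n\ge 1$ the Hardy inequality \eqref{hardy0} trivially holds with the largest Orlicz domain $L^1(0,1)$ (the kernel $r^{m/n-1}$ being bounded on $(0,1)$), so that the optimal global Young function is determined entirely by the near-zero domination condition. You instead argue directly: cover $\rn$ by congruent cubes with bounded overlap, apply the uniform embedding $W^{m,\bar B}(Q)\to W^{m,1}(Q)\to L^\infty(Q)$ on each, and then use a threshold decomposition exploiting $\bar B\simeq B$ near zero to land in $L^B(\rn)$; for optimality you note that every nontrivial Young function automatically dominates $t$ near infinity and use scaled cut-offs $\phi_k=t_k\psi(\cdot/R_k)$ together with \eqref{chiE} to force $A^{-1}(s)\le CB^{-1}(s)$ for small $s$. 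This is more elementary and self-contained, and it sidesteps the mild awkwardness that the Hardy operator $H_{\alpha,\beta}$ is introduced in Section~\ref{back} only under the constraint $0<\alpha<1$. Both routes reach the stated conclusion; the paper's is shorter because it reuses the same reduction machinery across all cases, while yours makes part (i) independent of that machinery.
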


Our last main results concern  the Orlicz-Sobolev  embedding \eqref{sobtrace} with a measure $\mu$ satisfying \eqref{E:sup} and \eqref{E:inf}. By the same reason as for \eqref{sob}, the optimal Orlicz-Sobolev domain space in these embeddings is $W^{m,1}(\Omega)$, provided that~$m \geq n$.

If, instead, $1\leq m <n$, the optimal Orlicz-Sobolev domain in \eqref{sobtrace}, when it exists, is built upon the Young function $B_{\gamma}$ defined, for  $\gamma \in [n-m, n]$, as
\begin{equation}
	\label{Atrace}
	B_{\gamma}(t) = \int _0^t \frac {G_{\gamma}^{-1}(s)}{s} \d s
    	\quad \text{for $t \geq 0$,}
\end{equation}
where $G_{\gamma}\colon [0, \infty ) \to [0, \infty)$ is given by
\begin{equation*}
	G_{\gamma} (t) = t \inf_{1\leq s\leq t} B^{-1}\bigl( s^{\frac \gamma n} \bigr) \,s^{\frac mn - 1}
		\quad \text{for $t \geq 1$,}
\end{equation*}
and $ G_{\gamma}(t) = t B^{-1}(1)$ for $t \in [0, 1)$. In particular, if $\mu$ is Lebesgue measure, then conditions \eqref{E:sup} and \eqref{E:inf} hold with $\gamma =n$, and $B_\gamma$ agrees with the function $B_n$ given by \eqref{A}.
%\\ Note that, in particular, $B_{n}=B_D$. 

\begin{theorem} [Optimal Orlicz-Sobolev domain  for  embeddings with measure] \label{thm:Traces}
Let  $n\ge 2$, and let $1 \leq m < n$.  Assume that $\Omega$ is a bounded Lipschitz domain in $\rn$, and let $\mu$ be a Borel measure satisfying conditions \eqref{E:sup} and \eqref{E:inf} for some $\gamma \in [n-m, n]$.  Let $B$ be a Young function, and let $B_{\gamma}$ be the Young function defined by \eqref{Atrace}. If
\begin{equation}
    \label{eq:Traceindex}
    I_{B_{\gamma}}< \frac nm,
\end{equation}
then
\begin{equation}\label{traceemb}
      W^{m, B_{\gamma}}(\Omega) \to L^B(\bar\Omega , \mu)\,,
\end{equation}
and $W^{m,B_{\gamma}}(\Omega)$ is the optimal Orlicz-Sobolev domain space in \eqref{traceemb}.
\\
Conversely, if \eqref{eq:Traceindex} fails, then no optimal Orlicz-Sobolev domain space  exists in \eqref{sobtrace}, in the sense that  any Orlicz-Sobolev space $W^{m,A}(\Omega)$ for which embedding \eqref{sobtrace} holds can be replaced with   a strictly larger Orlicz-Sobolev space for which \eqref{sobtrace} is still true.
\\
In particular, if $i_B> \textstyle\frac{\gamma}{n-m}$, then condition \eqref{eq:Traceindex} is equivalent to $I_B < \infty$, and
\begin{equation*}
    B_{\gamma}^{-1}(t)
        \simeq B^{-1}\bigl(t^{\frac \gamma n}\bigr)\,t^{\frac mn}
            \quad\text{near infinity.}
\end{equation*}
\end{theorem}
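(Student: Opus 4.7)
The plan is to reduce embedding \eqref{sobtrace} to the one-dimensional Hardy inequality \eqref{hardytrace} via the reduction principle of \cite{CPStrace}, and then to solve the optimal Orlicz-domain problem for the Hardy operator $H_{m/n, n/\gamma}$. Concretely, for any Young function $A$, the validity of \eqref{sobtrace} is equivalent to the inequality $\|H_{m/n, n/\gamma} f\|_{L^B(0,1)} \leq C \|f\|_{L^A(0,1)}$ for every nonnegative $f \in L^A(0,1)$; by \eqref{inclusionsob}, the search for the largest Orlicz-Sobolev domain reduces to the search for the largest Orlicz space $L^A(0,1)$ making this Hardy inequality true.

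The function $B_\gamma$ defined by \eqref{Atrace} is the natural candidate. Its construction mimics that of $B_n$ in \eqref{A}, with $B^{-1}(s)\,s^{m/n-1}$ replaced by $B^{-1}(s^{\gamma/n})\,s^{m/n-1}$; the modified exponent $\gamma/n$ accounts for the scaling of $H_{m/n,n/\gamma}$ compared to $H_{m/n, 1}$. To establish \eqref{traceemb}, I would pass to the associate inequality via the H\"older inequality \eqref{holder} and show the boundedness of the dual Hardy operator from $L^{\widetilde B}(0,1)$ to $L^{\widetilde{B_\gamma}}(0,1)$; the infimum defining $G_\gamma$ is designed precisely to provide the required pointwise bound on $B_\gamma^{-1}$. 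The condition $I_{B_\gamma}<n/m$ is what allows the relevant Calder\'on-type operator to be bounded, through a Boyd-interpolation argument as in Section~\ref{indices}.

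The optimality of $W^{m,B_\gamma}(\Omega)$ is proved by testing. Suppose \eqref{hardytrace} holds for some Young function $A$. Plugging in $f = \chi_{(0,s)}$, or suitable weighted characteristic functions, and comparing norms through \eqref{chiE}, one extracts a pointwise bound of the form $A^{-1}(1/s) \gtrsim B^{-1}(s^{-\gamma/n})\, s^{-m/n}$ near zero; taking the appropriate infimum over the admissible range of $s$, this translates into $A^{-1} \gtrsim G_\gamma^{-1}$, hence, by Remark~\ref{rem:AG}, $A$ dominates $B_\gamma$ near infinity. By \eqref{inclusionsob} this yields $W^{m,A}(\Omega) \to W^{m,B_\gamma}(\Omega)$, which is the desired optimality.

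The hardest step will be the converse: assuming $I_{B_\gamma}\geq n/m$, showing that no optimal Orlicz-Sobolev domain exists in \eqref{sobtrace}. Given any Young function $A$ for which the embedding holds, one must construct a strictly larger Young function $\widehat A$, in the sense that $A$ fails to dominate $\widehat A$ near infinity, such that \eqref{sobtrace} still holds with $\widehat A$. The idea is to exploit the failure of the upper Boyd-index bound by choosing a sequence $t_k\to\infty$ along which $B_\gamma$ grows at least as fast as $t^{n/m}$, and then enlarging $A$ on a corresponding sequence of dilated intervals while preserving the Hardy inequality \eqref{hardytrace}; this requires delicate bookkeeping analogous to the one underpinning Theorem~\ref{thm:W0}. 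Finally, when $i_B>\gamma/(n-m)$, the function $s \mapsto B^{-1}(s^{\gamma/n})\,s^{m/n-1}$ is eventually decreasing, so the infimum defining $G_\gamma(t)$ is attained at $s=t$, which yields $B_\gamma^{-1}(t)\simeq B^{-1}(t^{\gamma/n})\,t^{m/n}$ near infinity; translating Boyd indices through this relation shows that $I_{B_\gamma}<n/m$ is equivalent to $I_B<\infty$.
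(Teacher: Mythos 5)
Your skeleton --- reduce \eqref{sobtrace} to the Hardy inequality \eqref{hardytrace} via the reduction principle of \cite{CPStrace}, solve the optimal Orlicz-domain problem for $H_{\frac mn,\frac n\gamma}$, and transfer back through \eqref{inclusionsob} --- is exactly the paper's strategy: the paper simply invokes Lemma~\ref{lemm:OptimalityH} with $\alpha=\frac mn$, $\beta=\frac n\gamma$. However, the two steps where the actual work sits are not carried out, and as sketched they would not go through. First, the boundedness $H_{\frac mn,\frac n\gamma}\colon L^{B_\gamma}(0,1)\to L^B(0,1)$ under \eqref{eq:Traceindex} cannot be obtained from ``a Boyd-interpolation argument'': Boyd-type interpolation controls a Calder\'on operator on a \emph{single} rearrangement-invariant space through that space's own indices, while here an off-diagonal estimate between two different Orlicz spaces is needed, and the pointwise relation $B_\gamma^{-1}(t)\simeq G_\gamma(t)$ does not by itself yield the norm inequality. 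In the paper this step comes from the weak-type result of \cite{Musil} (equivalently Proposition~\ref{prop:GWE}) combined with the weak-to-strong upgrade of Lemma~\ref{lemm:HOrlOrl} and Corollary~\ref{cor:HOrlOrl}, whose discretization argument is precisely the content missing from your outline. Second, the converse (no optimal domain when $I_{B_\gamma}\ge \frac nm$) is only announced: you gesture at enlarging $A$ along a sequence $t_k\to\infty$ ``with delicate bookkeeping'', but no construction is given, and this is the crux; the paper gets it from \cite[Theorem~A]{Musil} together with Corollary~\ref{cor:HOrlOrl} and the index characterization in Proposition~\ref{prop:Bconditions}.

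Two smaller points. In the optimality-by-testing paragraph your inequalities are reversed: testing \eqref{hardytrace} with $f=\chi_{(0,s)}$ gives $A^{-1}(1/s)\lesssim B^{-1}\bigl(s^{-\gamma/n}\bigr)\,s^{-m/n}$ for small $s$, and then the monotonicity of $A^{-1}(t)/t$ absorbs the infimum and yields $A^{-1}\lesssim G_\gamma\simeq B_\gamma^{-1}$ near infinity, i.e.\ $A$ dominates $B_\gamma$; your ``$A^{-1}\gtrsim G_\gamma^{-1}$'' both points the wrong way and compares the wrong functions, although the stated conclusion is the right one. Finally, $i_B>\frac{\gamma}{n-m}$ does not force $s\mapsto B^{-1}\bigl(s^{\gamma/n}\bigr)s^{\frac mn-1}$ to be eventually decreasing; it only makes it equivalent, up to constants, to a non-increasing function, which is what Lemma~\ref{lemm:infout} actually proves, and the equivalence of $I_{B_\gamma}<\frac nm$ with $I_B<\infty$ then requires the index computation of Lemma~\ref{lemm:indices}, not just the asymptotic formula for $B_\gamma^{-1}$.
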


An integral version of embedding \eqref{traceemb} holds under the assumption that 
\begin{equation} \label{Bzt}
	\inf _{0<t<1} \frac {B(t)}{ t^\frac{\gamma}{n-m}} >0\,.
\end{equation}
It involves a modified version of the function $B_\gamma$ given by 
\begin{equation} \label{Atraceglob}
	B_{\gamma}^\infty (t) = \int _0^t \frac {{ G_{\gamma}^\infty}^{-1}(s)}{s} \d s
    	\quad \text{for $t \geq 0$,}
\end{equation}
where $G_{\gamma}^\infty\colon [0, \infty ) \to [0, \infty)$ is defined by
\begin{equation*}
	G_{\gamma}^\infty (t) = t \inf_{0<s\leq t} B^{-1}\bigl( s^{\frac \gamma n} \bigr) \,s^{\frac mn - 1}
		\quad \text{for $t \geq 0$.}
\end{equation*}

\begin{corollary} \label{cor:Traces}
Let $n$, $m$, $\gamma$, $\Omega$ and $\mu$ be as in Theorem \ref{thm:Traces}.
 Let $B$ be a Young function
satisfying \eqref{Bzt}, and let
$B_{\gamma}^\infty$ be the Young function defined by \eqref{Atraceglob}.
If
\begin{equation}
    \label{eq:Traceindexglobal}
    I_{B_{\gamma}^\infty}^\infty < \frac nm,
\end{equation}
then there exists a constant~$C$ such that
\begin{equation}
    \int_{\bar \Omega} B\Bigg(
        \frac{|u(x)|}
            {C\bigl(\sum _{k=0}^m\int _\Omega  B_{\gamma}^\infty(|\nabla ^k u|)\,\d y\bigr)^{m/n}}
        \Bigg)\,\d\mu (x) \leq 
    \bigg(\sum_{k=0}^m\int_\Omega  B_{\gamma}^\infty(|\nabla ^k u|)\,\d x\bigg)^{\frac \gamma n}
    \label{oct16}
\end{equation}
for every $u \in W^{m, B_{\gamma}^\infty}(\Omega)$.
\\
In particular, if $i_B^\infty > \textstyle\frac{\gamma}{n-m}$, then condition \eqref{eq:Traceindexglobal}
is equivalent to $I_B^\infty < \infty$, and
\begin{equation*}
     {B_{\gamma}^\infty}^{-1}(t)
        \simeq B^{-1}\bigl(t^{\frac \gamma n}\bigr)\,t^{\frac mn}
            \quad\text{for $t\ge 0$.}
\end{equation*}
\end{corollary}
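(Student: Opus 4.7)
To deduce Corollary~\ref{cor:Traces} from Theorem~\ref{thm:Traces}, I would first show that the functions $B_\gamma$ and $B_\gamma^\infty$ are equivalent near infinity, so that the global-index hypothesis $I_{B_\gamma^\infty}^\infty < n/m$ implies the local condition $I_{B_\gamma} < n/m$ of Theorem~\ref{thm:Traces}. These two Young functions differ only in the range of the infimum defining their auxiliary $G$-functions, and under~\eqref{Bzt} the contribution of $s\in(0,1]$ to $G_\gamma^\infty(t)$ is controlled so that, for large $t$, $G_\gamma^\infty(t) \simeq G_\gamma(t)$; hence $B_\gamma^\infty\simeq B_\gamma$ near infinity and $I_{B_\gamma} = I_{B_\gamma^\infty} \leq I_{B_\gamma^\infty}^\infty < n/m$. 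Theorem~\ref{thm:Traces} then yields $W^{m, B_\gamma}(\Omega)\to L^B(\bar\Omega,\mu)$, which by~\eqref{inclusionsob} and $|\Omega|<\infty$ coincides with $W^{m, B_\gamma^\infty}(\Omega)\to L^B(\bar\Omega,\mu)$, supplying a constant $C_0$ with
\begin{equation*}
    \|u\|_{L^B(\bar\Omega,\mu)} \leq C_0\, \|u\|_{W^{m, B_\gamma^\infty}(\Omega)}
\end{equation*}
for every $u \in W^{m, B_\gamma^\infty}(\Omega)$.

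Next I would upgrade this norm inequality to the modular form~\eqref{oct16} by a scaling argument. The global hypothesis $I_{B_\gamma^\infty}^\infty < n/m$ supplies, via~\eqref{pre:BoydIndex}, an exponent $q\in(I_{B_\gamma^\infty}^\infty,n/m)$ and a constant $c$ with $B_\gamma^\infty(\tau t) \leq c\,\tau^q B_\gamma^\infty(t)$ for every $\tau\geq 1$ and $t\geq 0$; the decay assumption~\eqref{Bzt} supplies the matching control near zero. Given $u$ with $\Lambda := \sum_{k=0}^m \int_\Omega B_\gamma^\infty(|\nabla^k u|)\,\d x > 0$, I would consider the rescaling $v:=u/\lambda$ with $\lambda$ proportional to $\Lambda^{m/n}$. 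The growth estimates above, combined with the Luxemburg-norm relation $\int B(|v|/\|v\|_{L^B})\,\d\mu\leq 1$ and with the Sobolev inequality from the previous step, produce~\eqref{oct16}; the exponent $\gamma/n$ on the right side reflects the $\gamma$-dimensional scaling of the Frostman measure $\mu$, encoded in the parameter $\beta=n/\gamma$ of the Hardy operator $H_{m/n, n/\gamma}$ in the reduction principle~\eqref{hardytrace}, and is consistent with the classical trace Sobolev exponent in the power-function model $B(t) = t^{q_B}$, $B_\gamma^\infty(t)=t^{p}$ with $q_B = p\gamma/(n-mp)$.

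For the concluding \emph{in particular} assertion, the hypothesis $i_B^\infty > \gamma/(n-m)$ forces the function $s\mapsto B^{-1}(s^{\gamma/n})\,s^{m/n-1}$ to be essentially non-decreasing on $(0,\infty)$, so the infimum defining $G_\gamma^\infty(t)$ is attained at $s=t$; this yields $G_\gamma^\infty(t) \simeq t^{m/n} B^{-1}(t^{\gamma/n})$ globally and ${B_\gamma^\infty}^{-1}(t) \simeq B^{-1}(t^{\gamma/n})\,t^{m/n}$ for every $t\geq 0$. Substituting into the definition of the global upper Boyd index of $B_\gamma^\infty$ reduces~\eqref{eq:Traceindexglobal} to $I_B^\infty<\infty$, mirroring with global indices the analogous step in Theorem~\ref{thm:Traces}. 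I expect the main obstacle to be the scaling step of the second paragraph: the precise matching of the exponent $\gamma/n$ on the right of~\eqref{oct16} requires careful bookkeeping of the Boyd-index relations between $B$ and $B_\gamma^\infty$, and the small-$\Lambda$ regime hinges essentially on~\eqref{Bzt} to control $B_\gamma^\infty$ near zero.
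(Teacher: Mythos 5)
The concluding \emph{in particular} assertion is handled correctly (it is exactly Lemmas~\ref{lemm:infout} and~\ref{lemm:indices} with $\alpha=m/n$, $\beta=n/\gamma$), but the central step of your argument -- upgrading a single norm inequality for the fixed pair $(B,B_\gamma^\infty)$ to the modular inequality \eqref{oct16} by rescaling $u$ -- has a genuine gap. The norm inequality $\|u\|_{L^B(\bar\Omega,\mu)}\le C_0\|u\|_{W^{m,B_\gamma^\infty}(\Omega)}$ is invariant under $u\mapsto u/\lambda$, so rescaling the function produces no new information; everything must come from converting modulars into Luxemburg norms and back, and for non-homogeneous Young functions this conversion loses exactly the powers of $\Lambda=\sum_k\int_\Omega B_\gamma^\infty(|\nabla^k u|)\,\d x$ that \eqref{oct16} requires. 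Concretely, when $\Lambda\gg1$ the only general estimate available is $\|\nabla^k u\|_{L^{B_\gamma^\infty}}\lesssim\Lambda$ (convexity gives $B_\gamma^\infty(t/\Lambda)\le B_\gamma^\infty(t)/\Lambda$, and the upper-index bound $B_\gamma^\infty(\tau t)\le c\tau^qB_\gamma^\infty(t)$, $\tau\ge1$, goes in the wrong direction -- one would need a lower-index estimate $B_\gamma^\infty(\tau t)\ge c\tau^rB_\gamma^\infty(t)$ with $r>1$, which is not assumed). The norm inequality then yields only $\int_{\bar\Omega}B\bigl(|u|/(c\Lambda)\bigr)\,\d\mu\le1$, and passing from the denominator $\Lambda$ to $\Lambda^{m/n}$ at the price of the factor $\Lambda^{\gamma/n}$ on the right would require a global growth bound $B(\tau t)\lesssim\tau^{\gamma/(n-m)}B(t)$, i.e.\ $I_B^\infty\le\gamma/(n-m)$, which is neither assumed nor true in the basic admissible examples (e.g.\ $B(t)=t^q$ with $q>\gamma/(n-m)$). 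Your power-function consistency check succeeds only because powers are homogeneous, which is precisely what makes modular and norm forms interchangeable in that special case.

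The paper closes this gap by rescaling the Young functions rather than the function $u$: as in the proof of Corollary~\ref{cor:W0}, one sets $B_N(t)=N^{-\gamma/n}B(tN^{-m/n})$, observes that $(B_N)_\gamma^\infty=B_\gamma^\infty/N$ and that the integral characterization of \eqref{eq:Traceindexglobal} furnished by Proposition~\ref{prop:Bconditions} holds for the rescaled pair with the \emph{same} constant, and then invokes Proposition~\ref{prop:GWE} (whose constants depend only on $\alpha$), Lemma~\ref{lemm:HOrlOrl} (whose constants are independent of the Young functions), and the reduction principle \eqref{traceineq}$\Leftrightarrow$\eqref{hardytrace} to obtain a family of norm inequalities $\|u\|_{L^{B_N}(\bar\Omega,\mu)}\le C\|u\|_{W^{m,(B_N)_\gamma^\infty}(\Omega)}$ with $C$ independent of $N$; choosing $N=\sum_{k=0}^m\int_\Omega B_\gamma^\infty(|\nabla^k u|)\,\d x$ gives \eqref{oct16} at once. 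The exponent $\gamma/n$ is thus encoded in the uniformity in $N$ of this family, not recoverable from the single $N=1$ instance, so your plan would need to be replaced by (or supplemented with) this uniform family argument. Your first step, routing through Theorem~\ref{thm:Traces} via an equivalence of $B_\gamma$ and $B_\gamma^\infty$ near infinity, is also not how the paper proceeds and is not needed once the family argument is in place.
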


\begin{example}
Assume  that $L^B(\Omega)= L^q (\log L)^\alpha(\Omega)$, the same Zygmund space as in Example~\ref{ex:LlogL}, where either $q\in(1,\infty)$ and $\alpha \in\R$, or $q=1$ and $\alpha \geq 0$. Assume that $1 \leq m< n$, the case when $m \geq n$ being trivial.  Let $\Omega$ be a bounded Lipschitz domain in $\mathbb R^n$, and let  $\mu$ be a Borel measure fulfilling conditions \eqref{E:sup} and \eqref{E:inf}.
Then
\begin{equation*}
	B_{\gamma}(t) \,\,\text{is equivalent to}\,
    \begin{cases}
        t^{\frac{nq}{\gamma+mq}}\, (\log t)^{\frac{n\alpha}{\gamma+mq}}
        & \text{if  $q>\frac{\gamma}{n-m}$, $\alpha \in\R$,} \\
        t\, (\log t)^{\frac{\alpha (n-m)}{\gamma}}
        & \text{if $q=\frac{\gamma}{n-m}$, $\alpha > 0$,} \\
        t
        & \text{otherwise,}
    \end{cases}
\end{equation*}
near infinity.
Hence,
\begin{equation*}
	I_{B_{\gamma}} = 
    \begin{cases}
    	\frac{nq}{\gamma+mq}
        & \text{if  $q>\frac{\gamma}{n-m}$, $\alpha \in\R$,} \\
        1
        & \text{otherwise.}
    \end{cases}
\end{equation*}
Since $I_{B_{\gamma}} < n/m$,  Theorem~\ref{thm:Traces} tells us that
\begin{equation*}
\begin{rcases}
        \text{if   $q>\frac{\gamma}{n-m}$, $\alpha \in\R$,}  \quad \quad \quad &   W^{m}   L^{\frac{nq}{\gamma+mq}}\, (\log L)^{\frac{n\alpha}{\gamma+mq}}   (\Omega) \\
        \text{if   $q=\frac{\gamma}{n-m}$, $\alpha > 0$,} &   W^{m} L (\log L)^{\frac{\alpha (n-m)}{\gamma}}(\Omega)
        \\
          \text{otherwise,} 
		 &   W^{m,1}(\Omega) 
    \end{rcases} \to L^q (\log L)^\alpha(\bar \Omega , \mu)\,,
\end{equation*}	
the domain spaces being optimal among all Orlicz-Sobolev spaces.
\end{example}

The optimal Orlicz-Sobolev domain space in \eqref{sobboundary} agrees with that in \eqref{sobtrace}, with $\gamma =n-1$. Namely, it is
built upon the Young function $B_{n-1}$ defined as in \eqref{Atrace}, with $\gamma = n-1$. This is the content of Corollary~\ref{cor:boundary} below, and follows from Theorem~\ref{thm:Traces}, and from the fact that, 
if $\Omega$ is a bounded Lipschitz domain, then the measure $\mu = \mathcal H^{n-1}|_{\partial \Omega}$ fulfills conditions  \eqref{E:sup} and \eqref{E:inf} with $\gamma = n-1$.

\begin{corollary}[Optimal Orlicz-Sobolev domain for boundary traces]\label{cor:boundary}
Let  $n\ge 2$ and $1 \leq m < n$. Assume that $\Omega$ is a bounded Lipschitz domain in $\rn$. Let $B$ be a Young function, and let $B_{n-1}$ be the Young function defined by \eqref{Atrace}, with $\gamma = n-1$.  If
\begin{equation}
    \label{eq:boundaryindex}
    I_{B_{n-1}} < \frac nm,
\end{equation}
then
\begin{equation}
    \label{boundaryemb}
      W^{m, B_{n-1}}(\Omega) \to L^B(\partial \Omega),
\end{equation}
and $W^{m,B_{n-1}}(\Omega)$ is the optimal Orlicz-Sobolev domain space in \eqref{boundaryemb}.
\\
Conversely, if \eqref{eq:boundaryindex} fails, then no optimal Orlicz-Sobolev domain space  exists in \eqref{sobboundary}, in the sense that  any Orlicz-Sobolev space $W^{m,A}(\Omega)$ for which embedding \eqref{sobboundary} holds can be replaced with   a strictly larger Orlicz-Sobolev space for which \eqref{sobboundary} is still true.
\\
In particular, if $i_B > \textstyle\frac{n-1}{n-m}$, then condition
\eqref{eq:boundaryindex} is equivalent to $I_B < \infty$, and
\begin{equation*}
    B_{n-1}^{-1}(t)
        \simeq B^{-1}\bigl(t^{\frac {n-1}n}\bigr)\,t^{\frac mn}
        \quad \text{near infinity.}
\end{equation*}
\end{corollary}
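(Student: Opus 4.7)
The plan is to deduce Corollary~\ref{cor:boundary} as a direct specialization of Theorem~\ref{thm:Traces}, taking $\mu = \mathcal H^{n-1}|_{\partial\Omega}$ and $\gamma = n-1$. Once the measure $\mu$ is shown to satisfy conditions \eqref{E:sup} and \eqref{E:inf} with $\gamma = n-1$, every conclusion of the corollary, including the existence, the optimality, the non-existence part when \eqref{eq:boundaryindex} fails, and the asymptotic formula for $B_{n-1}^{-1}$ under the assumption $i_B > (n-1)/(n-m)$, is obtained by reading off the corresponding conclusion from Theorem~\ref{thm:Traces} verbatim.

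The only thing requiring verification is therefore that the surface measure of a bounded Lipschitz boundary is Ahlfors $(n-1)$-regular. Since $\Omega$ is a bounded Lipschitz domain, there is a finite open cover $U_1, \ldots, U_N$ of $\partial\Omega$ in each of which, after a rigid motion, $\partial\Omega \cap U_j$ is the graph of a Lipschitz function $\varphi_j \colon \R^{n-1} \to \R$ with Lipschitz constant $L_j$. To check \eqref{E:sup}, fix $x\in\rn$ and $r>0$; if $B_r(x)\cap\partial\Omega$ is nonempty, then it is covered by finitely many graph pieces, and on each such piece the area formula yields
\begin{equation*}
\mathcal H^{n-1}\bigl(B_r(x)\cap\partial\Omega\cap U_j\bigr)
\leq \sqrt{1+L_j^2}\,\mathcal H^{n-1}\bigl(\pi_j(B_r(x))\bigr)
\leq C r^{n-1},
\end{equation*}
where $\pi_j$ denotes the projection onto the relevant hyperplane. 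Summing over $j$ gives \eqref{E:sup} with $\gamma = n-1$. For the matching lower bound \eqref{E:inf}, pick any $x_0 \in \partial\Omega$ lying in some $U_j$. For sufficiently small $r$, the graph of $\varphi_j$ over the $(n-1)$-dimensional disk of radius $r/\sqrt{1+L_j^2}$ centered at $\pi_j(x_0)$ lies inside $B_r(x_0)\cap\partial\Omega$, and its $\mathcal H^{n-1}$-measure is at least $c r^{n-1}$ for a constant $c = c(n,L_j)>0$. This yields \eqref{E:inf} with $\gamma = n-1$.

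With the Frostman conditions in place, Theorem~\ref{thm:Traces} applies with $\gamma = n-1$ and produces the embedding \eqref{boundaryemb}, the optimality of $W^{m,B_{n-1}}(\Omega)$ among all Orlicz-Sobolev domain spaces, the failure of existence of an optimal Orlicz-Sobolev domain whenever $I_{B_{n-1}} \ge n/m$, and, when $i_B > (n-1)/(n-m)$, the equivalence of \eqref{eq:boundaryindex} to $I_B < \infty$ together with the asymptotic behavior $B_{n-1}^{-1}(t) \simeq B^{-1}(t^{(n-1)/n})\,t^{m/n}$ near infinity. The main obstacle, such as it is, is purely bookkeeping: matching the hypothesis $i_B > \gamma/(n-m)$ of Theorem~\ref{thm:Traces} to the present $i_B > (n-1)/(n-m)$, and verifying the Ahlfors regularity estimate above in sufficient generality so that the constants $C$ and $c$ do not depend on the specific point $x$ or $x_0$ but only on the Lipschitz character of $\Omega$.
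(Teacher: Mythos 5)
Your proposal is correct and follows exactly the route the paper takes: the corollary is obtained by specializing Theorem~\ref{thm:Traces} to $\mu = \mathcal H^{n-1}|_{\partial\Omega}$ and $\gamma = n-1$, after checking that this measure satisfies the Frostman conditions \eqref{E:sup} and \eqref{E:inf}. The paper merely asserts the Ahlfors $(n-1)$-regularity of the Lipschitz boundary without proof, whereas you supply the standard graph-patch verification, but this is the same argument with the details filled in.
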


We conclude this section by specializing Theorem~\ref{thm:Traces} to embeddings of the form \eqref{subspaces} into Orlicz spaces defined on the intersection of $\Omega$ with $d$-dimensional compact submanifolds  ${\mathcal N} _d$ of $\rn$. Since the measure $\mu = \mathcal H^d|_{\Omega \cap {\mathcal N}_d}$ satisfies conditions \eqref{E:sup} and \eqref{E:inf}, with $\gamma = d$, from Theorem~\ref{thm:Traces} we infer the following corollary.

\begin{corollary}[Optimal Orlicz-Sobolev domain for traces on submanifolds]\label{cor:subspaces}
Let  $n\ge 2$, $1 \leq m < n$, and let $d \in \mathbb N$, with $n-m \leq d \leq n$. Assume that $\Omega$ is a bounded  Lipschitz domain in $\rn$, and let ${\mathcal N}_d$ be a $d$-dimensional compact submanifold  of $\rn$ such that $\Omega \cap {\mathcal N}_d \neq \emptyset$.
Let $B$ be a Young function, and let $B_{d}$ be the Young function defined by \eqref{Atrace}, with $\gamma = d$.  If
\begin{equation}
    \label{eq:subspaceindex}
    I_{B_{d}} < \frac nm,
\end{equation}
then
\begin{equation}
    \label{subspacesemb}
      W^{m, B_{d}}(\Omega) \to L^B(\Omega \cap {\mathcal N}_d)\,,
\end{equation}
and $W^{m,B_{d}}(\Omega)$ is the optimal Orlicz-Sobolev domain space in \eqref{subspacesemb}.
\\
Conversely, if \eqref{eq:subspaceindex} fails, then no optimal Orlicz-Sobolev domain space  exists in \eqref{subspaces}, in the sense that  any Orlicz-Sobolev space $W^{m,A}(\Omega)$ for which embedding \eqref{subspaces} holds can be replaced with   a strictly larger Orlicz-Sobolev space for which \eqref{subspaces} is still true.
\\
In particular, if $i_B > \textstyle\frac{d}{n-m}$, then condition
\eqref{eq:subspaceindex} is equivalent to $I_B < \infty$, and
\begin{equation*}
    B_{d}^{-1}(t)
        \simeq B^{-1}\bigl(t^{\frac {d}n}\bigr)\,t^{\frac mn}
        \quad \text{near infinity.}
\end{equation*}
\end{corollary}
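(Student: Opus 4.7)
The plan is to derive every claim from Theorem~\ref{thm:Traces} applied to the Borel measure $\mu = \mathcal H^d|_{\Omega\cap\mathcal N_d}$ on $\bar\Omega$, with the parameter $\gamma$ there set to $d$. Since $\mu$ is supported in $\Omega\cap\mathcal N_d$, one has $L^B(\bar\Omega,\mu)=L^B(\Omega\cap\mathcal N_d)$ as an equality of Banach spaces, so \eqref{subspacesemb} literally coincides with \eqref{traceemb}. Once the Frostman bounds \eqref{E:sup} and \eqref{E:inf} have been verified for this $\mu$ with $\gamma=d$, every conclusion of the corollary---existence of an optimal domain when \eqref{eq:subspaceindex} holds, non-existence when it fails, and the equivalence of $I_{B_d}<n/m$ with $I_B<\infty$ under the assumption $i_B>d/(n-m)$---is a direct transcription of the corresponding statement in Theorem~\ref{thm:Traces}.

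To verify \eqref{E:sup}, I would cover the compact $C^1$-submanifold $\mathcal N_d$ by finitely many bi-Lipschitz images of open subsets of $\R^d$. On each chart, the area formula gives $\mathcal H^d(B_r(x)\cap\mathcal N_d)\le C r^d$ for $x\in\rn$ and $r>0$ smaller than a threshold depending only on $\mathcal N_d$; for large $r$ the bound is trivial from the compactness of $\mathcal N_d$. Summing over the finitely many charts and using $\mu(B_r(x)\cap\bar\Omega)\le\mathcal H^d(B_r(x)\cap\mathcal N_d)$ produces \eqref{E:sup}.

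For \eqref{E:inf}, I invoke the hypothesis $\Omega\cap\mathcal N_d\ne\emptyset$ to pick some $x_0\in\Omega\cap\mathcal N_d$, and I choose $R>0$ so small that $B_R(x_0)\subset\Omega$ and the portion of $\mathcal N_d$ lying inside $B_R(x_0)$ is the graph of a $C^1$-function over a disk in the tangent space $T_{x_0}\mathcal N_d\cong\R^d$. A local bi-Lipschitz parametrization around $x_0$ then yields $\mu(B_r(x_0)\cap\bar\Omega)=\mathcal H^d(B_r(x_0)\cap\mathcal N_d)\ge c r^d$ for $0<r<R$, which is exactly \eqref{E:inf}.

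The only delicate point is the uniform-in-$x$ character of \eqref{E:sup}: the bound must hold with a single constant whatever the position of the center $x\in\rn$, not only for $x$ near $\mathcal N_d$. This is handled by the finiteness of the chart cover or, equivalently, by the compactness of $\mathcal N_d$; no Frostman-type estimate on $\partial\Omega$ is needed, since the measure $\mu$ is supported away from the boundary. With the two density bounds in hand, the full conclusion of the corollary follows from Theorem~\ref{thm:Traces} with no further work.
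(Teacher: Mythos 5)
Your proposal follows the paper's route exactly: Corollary~\ref{cor:subspaces} is obtained by applying Theorem~\ref{thm:Traces} with $\mu = \mathcal H^d|_{\Omega\cap\mathcal N_d}$ and $\gamma = d$, once one observes that $L^B(\bar\Omega,\mu)=L^B(\Omega\cap\mathcal N_d)$ and that this $\mu$ satisfies the Frostman bounds \eqref{E:sup} and \eqref{E:inf}. The paper merely asserts the latter fact; your chart-covering/area-formula verification of \eqref{E:sup} and the local graph argument for \eqref{E:inf} supply a correct justification of what the paper leaves implicit.
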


\section{Boyd indices and optimal Orlicz domains } \label{indices}

This section is devoted to the analysis of certain properties of Young functions in connection with their Boyd indices. We begin with the following proposition, that collects various  characterizations  of pointwise and integral growth conditions   of a  Young function, and of its conjugate, in terms of their Boyd indices.

\begin{proposition} 
Let $E$ be a finite-valued Young function, and let $0<\alpha<1$. The following conditions are equivalent.
% if E attains infinity then (i),(ii) and (iii) hold but (iv), (v), (vi) don't.
    \begin{enumerate}[\rm(i)]
    \item There exists a constant $k>1$ such that
    \begin{equation*}
        \int_t^\infty \frac{ E(s)}{s^{{1/\alpha}+1}}\,\d s
            \le \frac{ E(kt)}{t^{1/\alpha}}
                \quad\text{globally \quad [near infinity]}.
    \end{equation*}
    \item There exists a constant $k>1$ such that
    \begin{equation*}
        \int_0^t \frac{\widetilde{E}(s)}{s^{{1/(1-\alpha)}+1}}\,\d s
            \le \frac{\widetilde{E}(kt)}{t^{1/(1-\alpha)}}
                \quad\text{globally} \qquad \biggl[\int_1^t \frac{\widetilde{E}(s)}{s^{{1/(1-\alpha)}+1}}\,\d s
            \le \frac{\widetilde{E}(kt)}{t^{1/(1-\alpha)}}
                \quad\text{near infinity} \biggr].
    \end{equation*}
    \item There exist constants $\sigma>1$ and $c \in (0,1)$ such that
    \begin{equation*}
        E(\sigma t) \le c \sigma^{\frac{1}{\alpha}} E(t)
						\quad\text{globally \quad [near infinity]}.
    \end{equation*}
    \item There exist constants $\sigma>1$ and $c>1$ such that
    \begin{equation*}
        \widetilde{E}(\sigma t) \ge c\sigma^{\frac{1}{1-\alpha}} \widetilde{E}(t)
            \quad\text{globally \quad [near infinity]}.
    \end{equation*}
    \item The global [local] upper Boyd index of $E$ satisfies
    \begin{equation*}
        I_E ^\infty< 1/\alpha
				\qquad
        \bigl[I_E < 1/\alpha\bigr].
    \end{equation*}
    \item The global [local] lower Boyd index of $\widetilde{E}$ satisfies\textit{}
    \begin{equation*}
        i_{\widetilde E}^\infty > 1/(1-\alpha)
				\qquad
        \bigl[i_{\widetilde E} > 1/(1-\alpha)\bigr].
    \end{equation*}
    \end{enumerate}
\label{prop:Bconditions}
\end{proposition}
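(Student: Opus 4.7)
The six conditions form two dual triads: (i), (iii), (v) govern $E$, while (ii), (iv), (vi) are the parallel statements for $\widetilde E$. My plan is to prove the internal cycle (v) $\Leftrightarrow$ (iii) $\Leftrightarrow$ (i), the analogous cycle (vi) $\Leftrightarrow$ (iv) $\Leftrightarrow$ (ii), and to link the two triads via the Boyd-index duality (v) $\Leftrightarrow$ (vi).

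\textbf{Pointwise $\Leftrightarrow$ Boyd-index forms.} For (iii) $\Leftrightarrow$ (v), I would recast (iii) as the bound $\widehat h_E^\infty(\sigma) \leq c\,\sigma^{1/\alpha}$ globally, or $\widehat h_E(\sigma)\leq c\,\sigma^{1/\alpha}$ near infinity, for some $\sigma>1$ and $c\in(0,1)$. Taking logarithms and dividing by $\log\sigma>0$, this is equivalent to the existence of $\sigma$ at which $\log\widehat h_E^\infty(\sigma)/\log\sigma<1/\alpha$, which by \eqref{pre:BoydIndex} and \eqref{march1}, together with the fact that the relevant infima coincide with limits as $\sigma\to\infty$, is precisely $I_E^\infty<1/\alpha$ (respectively $I_E<1/\alpha$). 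The companion equivalence (iv) $\Leftrightarrow$ (vi) is entirely parallel, applied to the lower Boyd index: (iv) says that the corresponding dilation quotient of $\widetilde E$ is bounded below by $c\,\sigma^{1/(1-\alpha)}$ with $c>1$, matching $i_{\widetilde E}>1/(1-\alpha)$. The cross-triad link (v) $\Leftrightarrow$ (vi) then follows from the Young-conjugacy identity $1/I_E^\infty+1/i_{\widetilde E}^\infty=1$ (locally $1/I_E+1/i_{\widetilde E}=1$), obtained directly from the inverse estimate \eqref{eq:YoungCompl}.

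\textbf{Integral $\Leftrightarrow$ pointwise forms.} For (iii) $\Rightarrow$ (i), I would split the integral dyadically in the $\sigma$-scale, $\int_t^\infty E(s)\,s^{-1/\alpha-1}\,\d s=\sum_{k\ge 0}\int_{\sigma^k t}^{\sigma^{k+1}t}E(s)\,s^{-1/\alpha-1}\,\d s$. On the $k$th piece, iterating (iii) yields $E(\sigma^{k+1}t)\leq c^{k+1}\sigma^{(k+1)/\alpha}E(t)$, and combining with $\int_{\sigma^k t}^{\sigma^{k+1}t}s^{-1/\alpha-1}\,\d s=\alpha(\sigma^k t)^{-1/\alpha}(1-\sigma^{-1/\alpha})$ produces a geometric series with ratio $c<1$; summation gives a constant multiple of $E(t)\,t^{-1/\alpha}$, which \eqref{kt} replaces by $E(kt)\,t^{-1/\alpha}$ once $k$ is enlarged. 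For the converse (i) $\Rightarrow$ (iii), I would subtract the instance of (i) at $\sigma t$ from that at $t$ and use monotonicity of $E$ to bound $\int_{\sigma t}^\infty E(s)\,s^{-1/\alpha-1}\,\d s\geq E(\sigma t)\,\alpha(\sigma t)^{-1/\alpha}$; this gives a crude estimate $E(\sigma t)\leq C_0\,\sigma^{1/\alpha}E(kt)$ with $C_0$ independent of $\sigma$, and an $N$-fold iteration in $\sigma$, absorbing the extra $k^N$ by choosing $\sigma$ large enough, forces the overall multiplicative constant below $1$ and delivers (iii). The companion (ii) $\Leftrightarrow$ (iv) follows the same dyadic pattern for $\widetilde E$, with the exponent $1/(1-\alpha)$ in place of $1/\alpha$, integrals extending up to $t$ (from $0$ globally and from $1$ near infinity), and the direction of the multiplicative estimate reversed.

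\textbf{Main obstacle.} The subtlest step is the converse (i) $\Rightarrow$ (iii) and its $\widetilde E$-counterpart: the initial crude estimate produces a constant that need not lie below $1$, so an iterative improvement is required, and one must verify that the iterates $\sigma^N t$ and the shifted arguments $k^N t$ remain in the range $[t_0,\infty)$ on which the hypothesis is active — a bookkeeping matter that nonetheless requires care when combining the \emph{near infinity} proviso with multi-step iteration, and that in the case of (ii) $\Leftrightarrow$ (iv) governs the change of lower integration limit from $0$ to $1$ on passing from the global to the near-infinity form.
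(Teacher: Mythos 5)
Your overall plan — proving the cycle (v) $\Leftrightarrow$ (iii) $\Leftrightarrow$ (i) and its conjugate analogue, then bridging via the Boyd-index duality $1/I_E^\infty + 1/i_{\widetilde E}^\infty = 1$ — is a legitimate alternative to the paper's route, which instead bridges via the pointwise Young-conjugacy identity (iii) $\Leftrightarrow$ (iv) and cites Str\"omberg for (ii) $\Leftrightarrow$ (iv). The duality identity you invoke does follow from \eqref{eq:YoungCompl} through $h_{\widetilde E}^\infty(\sigma)\simeq\sigma\,h_E^\infty(1/\sigma)$, and your direct reading of (iii) $\Leftrightarrow$ (v) from \eqref{pre:BoydIndex} and \eqref{march1} is fine; your dyadic argument for (iii) $\Rightarrow$ (i) matches the paper's.

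There is, however, a genuine gap in your proposed proof of (i) $\Rightarrow$ (iii). Your crude estimate is $E(\sigma t)\le \alpha^{-1}\sigma^{1/\alpha}E(kt)$, which is correct. But the iteration does not close. Substituting $t\mapsto t/k$ gives $E(\tau t)\le \alpha^{-1}k^{1/\alpha}\,\tau^{1/\alpha}E(t)$ with $\tau=\sigma/k$, and iterating $N$ times produces
\begin{equation*}
E(\tau^N t)\le \bigl(\alpha^{-1}k^{1/\alpha}\bigr)^{N}\,(\tau^N)^{1/\alpha}\,E(t).
\end{equation*}
Since $\alpha^{-1}>1$ and $k>1$, the factor $\alpha^{-1}k^{1/\alpha}$ strictly exceeds $1$, so the multiplicative constant $(\alpha^{-1}k^{1/\alpha})^N$ grows with $N$ and is independent of $\sigma$; no choice of $\sigma$ or $N$ can push it below $1$. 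The difficulty is structural, not the bookkeeping on $[t_0,\infty)$ that you flag: shrinking the domain of integration and using only monotonicity of $E$ loses too much. The paper's proof of (i) $\Rightarrow$ (iii) instead uses an ``infimum over an interval'' device: one takes $\rho\in[1,\sigma]$ realizing $\inf_{t\le r\le\sigma t}E(r)\,r^{-1/\alpha}$, integrates (i) only over $[\rho t,\sigma t]$ where $E(r)\,r^{-1/\alpha}$ is at least its minimum, and deduces $k^{1/\alpha}\ge\log(\sigma/\rho)$, hence $\rho\ge e^{-k^{1/\alpha}}\sigma$. This pins the minimizing point near the right endpoint, and, together with the $\Delta_2$-condition (which is established first, by a contradiction argument of the same kind), yields (iii) by contradiction: failure of (iii) would force $k^{1/\alpha}>c_1\log\sigma$ for arbitrarily large $\sigma$. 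Your proposed iteration cannot replace this, and the same obstruction would afflict your from-scratch proof of (ii) $\Rightarrow$ (iv).
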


\begin{proof}
We shall prove the statement in the form \lq\lq near infinity". The proof of the global version is analogous - even simpler in fact - and will be omitted.
 \\
 \textit{(i) is equivalent to (iii)} This equivalence is stated in \citep[Lemma 2.3.~(ii)]{Stromberg:Indiana},  without proof.  We  provide a proof here, for
    completeness.
		Assume that there exist $k>1$ and $t_0> 0$ such that inequality (i) is fulfilled for  every $t>t_0$.
    Fix $\sigma>1$ and $t>t_0 k$, and let $\rho   \in [1, \sigma]$ be such that
    \begin{equation}
        E(\rho t)\, (\rho t)^{-\frac{1}{\alpha}}
            = \inf_{t\le r\le\sigma t}
            E(r)\, r^{-\frac{1}{\alpha}}.
            \label{defrho}
    \end{equation}
    We claim that
    \begin{equation}
        \sigma \ge \rho \ge e^{-k^{\frac{1}{\alpha}}} \sigma.
        \label{sigmarho}
    \end{equation}
    The former inequality is part of the definition of $\rho$. As for the latter, we have that
    \begin{equation*}
        E(\rho t)\, (\rho t)^{-\frac{1}{\alpha}} k^\frac{1}{\alpha}
            \ge \int_{\rho t/ k}^{\infty} \frac{E(s)}{s^{1/\alpha+1}} \d s
            \ge \int_{\rho t}^{\sigma t} \frac{E(s)}{s^{1/\alpha+1}} \d s
            \ge E(\rho t)\, (\rho t)^{-\frac{1}{\alpha}}  \log \Bigl( \frac{\sigma}{\rho} \Bigr),
    \end{equation*}
    whence the claim follows.
\\  Next, we show that $E$ satisfies the $\Delta_2$-condition near infinity. Suppose, by contradiction, that for every $j\in\N$ there exists  $t>t_0 k$ such that $E(2t) > j E(t)$. Choosing $\sigma = 2e^{k^{\frac{1}{\alpha}}}$, and $\rho$ defined by \eqref{defrho}, ensures that 
    \begin{equation}
        E(t) t^{-\frac{1}{\alpha}} k^\frac{1}{\alpha}
            \ge \int_{t/ k}^{\infty} \frac{E(s)}{s^{1/\alpha+1}} \d s
            \ge \int_{t}^{\sigma t} \frac{E(s)}{s^{1/\alpha+1}} \d s
            \ge E(\rho t)\, (\rho t)^{-\frac{1}{\alpha}}  \log\sigma\,.
            \label{lblog}
    \end{equation}
    Hence,
    \begin{equation*}
        E(2t)\, k^\frac{1}{\alpha}
            \ge j E(\rho t)\, \rho^{-\frac{1}{\alpha}} \log\sigma
            \ge j E(2t)\, \sigma^{-\frac{1}{\alpha}} \log\sigma,
    \end{equation*}
    since $\sigma\ge\rho\ge 2$, by \eqref{sigmarho}.  Therefore,
    \begin{equation*}
        k^\frac{1}{\alpha} \ge j\sigma^{-\frac{1}{\alpha}} \log\sigma
    \end{equation*}
    for all $j\in\N$, which is impossible.
\\ Now suppose that (iii) does not hold. Thus, for every $\sigma>1$ and   $c \in (0,1)$ there exists a sequence $\{t_j\}$ such that  $t_j \to  \infty$,  and
    \begin{equation}\label{dic1}
        E(\sigma t_j) > c \sigma^\frac{1}{\alpha} E(t_j)
    \end{equation}
    for $j \in \mathbb N$. Let $\rho$ be as in \eqref{defrho}.  By \eqref{dic1} and  \eqref{lblog},
    \begin{equation}\label{dic2}
        E(\sigma t_j)\, (\sigma t_j)^{-\frac{1}{\alpha}} k^\frac{1}{\alpha}
            > c E(t_j)\, t_j^{-\frac{1}{\alpha}} k^\frac{1}{\alpha}
            > c E(\rho t_j)\, (\rho t_j)^{-\frac{1}{\alpha}} \log\sigma.
    \end{equation}
    From \eqref{dic2}, \eqref{sigmarho} and the $\Delta_2$-condition near infinity for $E$, we conclude
    that there exists a positive constant $c_1$ such that
    \begin{equation*}
        E(\sigma t_j)\, k^\frac{1}{\alpha}
            > c E\bigl( e^{-k^\frac{1}{\alpha}} \sigma t_j\bigr) \log\sigma
            > c_1 E( \sigma t_j) \log\sigma
    \end{equation*} 
    for sufficiently large $j$. 
   Hence,
    \begin{equation*}
        k^\frac{1}{\alpha} > c_1 \log\sigma
    \end{equation*}
    for arbitrarily large $\sigma$,  a contradiction.
\\
		\textit{(iii) implies (i)}. Let $t_0>0$ be such that inequality (iii) holds for $t \geq t_0$. Let $j\in \N$. An iterative use of assumption (iii)  ensures that 
		\begin{equation}
			E(s) \le c^j \sigma^\frac{j}{\alpha} E(s\sigma^{-j}) 
				\quad \hbox{for $s\ge \sigma^j t_0$.}
				\label{eq:iiimtimes}
		\end{equation}
		 By \eqref{eq:iiimtimes}, if   $t\ge t_0$, then
		\begin{align*}
			\int_t^\infty \frac{ E(s)}{s^{{1/\alpha}+1}}\,\d s
				& = \sum_{j=0}^\infty
					\int_{t\sigma^j}^{t\sigma^{j+1}} \frac{ E(s)}{s^{{1/\alpha}+1}}\,\d s
				 \le \sum_{j=0}^\infty c^j
					\int_{t\sigma^j}^{t\sigma^{j+1}}
                    	\sigma^{j/\alpha} \frac{E(s\sigma^{-j})}{s^{1/\alpha+1}}\d s
					 \\ \noindent & = \sum_{j=0}^\infty c^j
					\int_{t}^{\sigma t} \frac{ E(r)}{r^{{1/\alpha}+1}}\,\d r
					\le \frac{1}{1-c}
					E(\sigma t)
					\int_{t}^{\sigma t} \frac{\d r}{r^{{1/\alpha}+1}}
                = \alpha \frac{1-\sigma^{1/\alpha}}{1-c}
					 \frac{ E(\sigma t)}{t^{1/\alpha}}\,.
		\end{align*}
		Hence, (i) follows via property \eqref{kt}.
\\	 \textit{(iii) implies (v)}. Assume that (v) does not hold, i.e.\ $I_E\ge 1/\alpha$.
		By equation \eqref{march1},  
		\begin{equation*}
			\frac{1}{\alpha} \le
				\inf_{1 <\sigma<\infty} \frac{\log \hat{h}_E(\sigma)}{\log\sigma},
		\end{equation*}
		and hence $\sigma^{1/\alpha} \le \hat{h}_E(\sigma)$ for every $\sigma\geq 1$.
		Owing to  the very definition of   $\hat{h}_E$,  
		\begin{equation*}
			\sigma^{\frac{1}{\alpha}}
				\le \limsup_{t\to\infty} \frac{E(\sigma t)}{E(t)}.
		\end{equation*}
		Hence, for every $c\in (0,1)$ and   $t_0>0$, there exists   $t>t_0$ such that
		\begin{equation*}
			c\sigma^{\frac{1}{\alpha}} < \frac{E(\sigma t)}{E(t)},
		\end{equation*}
		and this contradicts (iii).
\\  \textit{(v) implies (iii)}. Assume, by contradiction, that (iii) fails. Thereby, for every $\sigma>1$ and  $c \in (0,1)$ there exists a sequence
		$t_j \to \infty$ satisfying
		\begin{equation*}
			c\sigma^{\frac{1}{\alpha}} E(t_j) < E(\sigma t_j)
            	\quad \text{for $j \in \mathbb N$.}
		\end{equation*}
		Thus
		\begin{equation*}
			c\sigma^{\frac{1}{\alpha}}
				\leq  \limsup_{j\to\infty} \frac{E(\sigma t_j)}{E(t_j)}
				\le \limsup_{t\to\infty} \frac{E(\sigma t)}{E(t)}
				= \hat{h}_E (\sigma)\,,
		\end{equation*}
		whence
		\begin{equation*}
			\frac{\log \bigl( c\sigma^{1/\alpha} \bigr)}{\log \sigma}
				\leq \frac{\log \hat{h}_E(\sigma)}{\log \sigma}\,.
		\end{equation*}
		Thanks to \eqref{march1}, passing to the limit as $\sigma\to\infty$ yields $1/\alpha \le I_E$,
		thus contradicting  (v).
\\ \textit{(iii) is equivalent to (iv)}. 
Condition (iii) is equivalent to
    \begin{equation} \label{eqiii}
        E(\sigma t) \le (c \sigma)^{\frac{1}{\alpha}} E(t)
    \end{equation}
    for some constants $c \in (0,1)$ and $\sigma >1$,  and for sufficiently large $t$. Taking the Young conjugate of both sides, and making use of \eqref{YoungCompl} tell us that \eqref{eqiii} is in turn equivalent to
    \begin{equation}\label{40a}
        \widetilde{E}\bigl(t\sigma^{-1} \bigr)
            \ge (c \sigma)^{\frac{1}{\alpha}} \widetilde{E}\bigl( t(c\sigma)^{-\frac{1}{\alpha}} \bigr)
    \end{equation}
    for large $t$. Setting $\varrho = c^\frac{1}{\alpha}\sigma^{\frac{1}{\alpha}-1}$, and changing variables, equation \eqref{40a} reads
  \iffalse
  
    Changing the variable $t\mapsto t(c\sigma)^\frac{1}{\alpha}$ leads to
    \begin{equation*}
        \widetilde{E}\bigl(tc^\frac{1}{\alpha}\sigma^{\frac{1}{\alpha}-1} \bigr)
            \ge (c \sigma)^{\frac{1}{\alpha}} \widetilde{E}( t),
            \quad t \ge \widetilde{t}_0,
    \end{equation*}
    %where $\widetilde{t}_0 = t_0(c\sigma)^\frac{1}{\alpha}$.
    for suitable $\widetilde{t}_0$.
    Denoting $\varrho = c^\frac{1}{\alpha}\sigma^{\frac{1}{\alpha}-1}$,
    we obtain

    \fi
   \begin{equation*}
        \widetilde{E}(\varrho t)
            \ge c^{\frac{1}{\alpha-1}}\varrho^{\frac{1}{1-\alpha}} \widetilde{E}(t)
    \end{equation*}
    for large $t$.
   Thus, it suffices to show that $\varrho>1$.  Combining \eqref{kt} and  \eqref{eqiii} yields
	\begin{equation*}
		\sigma E(t)
        	\le E(\sigma t)
        	\le (c \sigma)^{\frac{1}{\alpha}} E(t)
	\end{equation*}
    for large $t$, whence $\varrho\ge 1$. If $\varrho >1$ we are done. On the other hand,  $\varrho=1$ if and only if $E(t)= t$ for large $t$, and the latter condition implies that  $\widetilde{E}=\infty$ near infinity, so that (iv) is  trivially satisfied.
 	\\ The proof of the reverse implication is similar.
  \par\noindent
\textit{(ii) is equivalent to (iv)}.  This is established  in \citep[Lemma 2.3~(i)]{Stromberg:Indiana}.
 \par\noindent
\textit{(iv) is equivalent to (vi)}. The proof of this fact follows along the same lines as that of the equivalence of  (iii) and (v), and will be omitted, for brevity.
\end{proof}

\def\Binf{B_{\alpha,\beta}^{\infty}}
\def\Beinf#1{{#1}_{\alpha,\beta}^{\infty}}
\def\Ginf{G_{\alpha,\beta}^{\infty}}

\def\B{B_{\alpha,\beta}}
\def\Be#1{{#1}_{\alpha,\beta}}
\def\G{G_{\alpha,\beta}}

\def\NI{\text{near infinity}}
\def\GL{\text{for $t> 0$}}

We next analyze connections between the Boyd indices of a  Young function $B$, and those of the Young function $\B$ defined, for $0<\alpha<1$, $\beta>0$  and
$\alpha+1/\beta\ge 1$, as
		\begin{equation}
			\B(t) = \int_{0}^{t} \frac{\G^{-1}(s)}{s}\d s
				\quad \text{ for $t\ge 0$,}
			\label{Bab}
		\end{equation}
where $\G\colon [0, \infty) \to [0,\infty)$ is given  by
    \begin{equation}\label{dic12}
        G_{\alpha , \beta}(t)
            = \begin{cases}
				tB^{-1}(1) &\text{if $0 \leq t \leq 1$,}
            \\
            	t \displaystyle\inf_{1\le s\le t}
            	B^{-1}\bigl(s^{1/\beta }\bigr) s^{\alpha-1}
                &\text{if $t > 1$.}
			\end{cases} 
	\end{equation} 
Note that, by the same  argument as in Remark~\ref{rem:AG}, $\B$ is actually a Young function,  and
\begin{equation*}
	\B^{-1}(t) \simeq \G(t)
		\quad \text{for $t> 0$.}
\end{equation*}
Let us also observe that
\begin{equation*}
	1\le I_{\B} \le \frac{1}{\alpha}
\end{equation*}
for every $B$.
This follows from the fact that
\begin{equation}\label{dic11}
\B^{-1}(t)\,t^{-\alpha}
	\simeq \inf_{1\le s<\infty} B^{-1}(s)
    	\max\bigl\{ 1,  t/s \bigr\}^{1-\alpha}
		\quad \text{ for $t\geq 1$,}
\end{equation}
and that the right-hand side of \eqref{dic11} is a non-decreasing function.

Under the additional assumption  that 
\begin{equation} \label{Bzero}
	\inf_{0<t<1} \frac {B(t)}{ t^\frac{1}{\beta(1-\alpha)}}>0,
\end{equation}
 we also define
\begin{equation} \label{Babinf}
	\Binf(t) = \int_{0}^{t} \frac{\Ginf{}^{-1}(s)}{s}\d s
		\quad \text{ for $t\ge 0$,}
\end{equation}
where $\Ginf \colon [0, \infty) \to [0,\infty)$ is given  by
\begin{equation} 
	\label{apr1}
	\Ginf (t)
		 = t \inf_{0 < s\le t}
			B^{-1}\bigl(s^{1/\beta }\bigr) s^{\alpha-1}
			\quad\GL.
\end{equation} 
Note that \eqref{Bzero} guarantees that $\Ginf$ is positive on $(0,\infty)$. Furthermore,
 by an   argument similar to that of   Remark~\ref{rem:AG},
$\Binf$ is a Young function,  and
\begin{equation}
	\label{apr2}
	\Binf{}^{-1}(t)
		\simeq \Ginf (t)
		\quad \GL.
\end{equation}

The next lemma tells us that, under a suitable lower bound for the lower Boyd index of $B$, the infimum on the right-hand side of equations \eqref{dic12} and \eqref{apr1} can be disregarded.

\begin{lemma} \label{lemm:infout} Let $B$ be  a Young function, and let $0<\alpha<1$,  $\beta>0$  and
$\alpha+1/\beta\ge 1$. 
\\ {\rm(i)} Assume that 
\begin{equation}\label{march2}
	i_B > \frac{1}{\beta(1-\alpha)}.
\end{equation}
Then 
\begin{equation}
\label{march3}
	\inf_{1\le s\le t} B^{-1} \bigl( s^{1/\beta} \bigr)\, s^{\alpha-1}
		\simeq B^{-1} \bigl( t^{1/\beta} \bigr)\, t^{\alpha-1}
		\quad\NI.
\end{equation}
Hence, 
%if $\B$ is the function defined by \eqref{Bab}, then
\begin{equation*}
	\B^{-1}(t)
		\simeq B^{-1}\bigl( t^{1/\beta} \bigr)\,t^{\alpha}
		\quad\NI.
\end{equation*}
	Conversely if \eqref{march3} holds, then 
$i_B \ge \textstyle\frac{1}{\beta(1-\alpha)}$.
\\{\rm (ii)} Assume  in addition that \eqref{Bzero} holds. If
\begin{equation}
	\label{eq:lowerindexbig}
	i_B^\infty> \frac{1}{\beta(1-\alpha)},
\end{equation}
then
\begin{equation}
\label{eq:infout}
	\inf_{0< s\le t} B^{-1} \bigl( s^{1/\beta} \bigr)\, s^{\alpha-1}
		\simeq B^{-1} \bigl( t^{1/\beta} \bigr)\, t^{\alpha-1}
		\quad\GL.
\end{equation}
Hence, 
%if $\Binf$ is the function defined by \eqref{Babinf}, then
\begin{equation}\label{dic14}
	{\Binf}^{-1}(t)
		\simeq B^{-1}\bigl( t^{1/\beta} \bigr)\,t^{\alpha}
		\quad\GL.
\end{equation}
	Conversely if \eqref{eq:infout} holds, then 
$i_B^\infty \ge \textstyle\frac{1}{\beta(1-\alpha)}$.
\end{lemma}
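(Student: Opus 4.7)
My plan is to reduce both parts to a single Boyd-index statement: that $B^{-1}(u)\,u^{-1/p}$, where $p=\tfrac{1}{\beta(1-\alpha)}$, is equivalent up to multiplicative constants to a non-increasing function---near infinity in part~(i) and globally in part~(ii). The change of variable $u=s^{1/\beta}$ turns the expression under the infimum into exactly $B^{-1}(u)\,u^{-1/p}$, since $B^{-1}(s^{1/\beta})\,s^{\alpha-1}=B^{-1}(u)\,u^{\beta(\alpha-1)}=B^{-1}(u)\,u^{-1/p}$. Once essential monotonicity of this expression is in place, \eqref{march3} follows immediately, and feeding \eqref{march3} into the definition \eqref{dic12} of $\G$ together with the noted equivalence $\B^{-1}\simeq\G$ delivers the stated expression for~$\B^{-1}$.

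For the direct implication in~(i), I would start from $i_B>p$ and use the limit formulation in \eqref{BIprop2} to extract $\epsilon>0$ with $h_B(t)\le t^{1/(p+\epsilon)}$ for all large~$t$; submultiplicativity of $h_B$ then propagates this bound (up to a constant) to every $t\ge 1$. Unwinding the definition $h_B(\lambda)=\limsup_{s\to\infty}B^{-1}(\lambda s)/B^{-1}(s)$ produces constants $C$ and $u_0$ such that
\begin{equation*}
\frac{B^{-1}(\lambda u)}{B^{-1}(u)}\le C\,\lambda^{1/(p+\epsilon)}\quad\text{for every $\lambda\ge 1$ and $u\ge u_0$,}
\end{equation*}
which rearranges to $B^{-1}(v)/v^{1/p}\le C\,B^{-1}(u)/u^{1/p}$ for all $v\ge u\ge u_0$. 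The upper bound in \eqref{march3} is trivial from the choice $s=t$, while the matching lower bound combines this essentially non-increasing property on $[u_0^\beta,t]$ with the boundedness from below (by a positive constant) of $B^{-1}(s^{1/\beta})\,s^{\alpha-1}$ on the compact interval $[1,u_0^\beta]$, valid once $t$ is sufficiently large.

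The converse in~(i) I would read directly off \eqref{march3}: it produces a constant~$C$ with $B^{-1}(t^{1/\beta})\,t^{\alpha-1}\le C\,B^{-1}(s^{1/\beta})\,s^{\alpha-1}$ for $1\le s\le t$ large, and reverting the substitution gives $B^{-1}(v)/v^{1/p}\le C\,B^{-1}(u)/u^{1/p}$ for large $u\le v$. Hence $B^{-1}(u)\,u^{-1/p}$ is equivalent near infinity to a non-increasing function, and the observation at the end of Section~\ref{back} forces $i_B\ge p=\tfrac{1}{\beta(1-\alpha)}$. Part~(ii) is handled by the identical chain of arguments run globally: assumption~\eqref{Bzero} keeps $\Ginf$ positive on all of $(0,\infty)$ so that $\Binf$ and its inverse are well defined, and one replaces $h_B$, $i_B$, and the near-infinity bookkeeping by their global counterparts $h_B^\infty$, $i_B^\infty$, with the infimum now running over $s\in(0,t]$ and the essentially non-increasing estimate valid for all $u\le v$ in $(0,\infty)$.

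The hard part will be the very first step of the direct implication: the paper only records that essentially non-increasing behaviour of $A^{-1}(t)\,t^{-\sigma}$ implies $i_A\ge 1/\sigma$, so the reverse passage from the strict Boyd-index hypothesis $i_B>p$ (respectively $i_B^\infty>p$) back to essential monotonicity of $B^{-1}(u)\,u^{-1/p}$ must be carried out by hand, via $\limsup$ manipulation, submultiplicativity of $h_B$, and the strict inequality supplying the margin $\epsilon>0$. Once that monotonicity step is in place, the remainder of the argument is a mechanical change of variable and an application of the already-noted identification $\B^{-1}\simeq\G$ (respectively~\eqref{apr2}).
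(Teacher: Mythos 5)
Your argument is sound in outline but follows a genuinely different route from the paper's. The paper proves part (ii) by conjugation: using \eqref{eq:YoungCompl} it rewrites \eqref{eq:infout} as the equivalent statement $\inf_{0<s\le t}\widetilde B(s)s^{-\eta}\simeq \widetilde B(t)t^{-\eta}$ with $\eta=\frac{1}{\beta(\alpha-1)+1}$, converts the hypothesis $i_B^\infty>\frac{1}{\beta(1-\alpha)}$ through Proposition~\ref{prop:Bconditions} into the integral inequality $\int_t^\infty\widetilde B(s)s^{-\eta-1}\,\d s\le\widetilde B(kt)t^{-\eta}$, and then shows that a minimizer $\rho t$ of $\widetilde B(s)s^{-\eta}$ on $(0,t]$ must satisfy $\rho\ge e^{-k^\eta}$, concluding with the $\Delta_2$-property of $\widetilde B$ obtained inside that proposition; part (i) is declared a minor modification. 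You instead stay entirely on the side of $B^{-1}$: the index hypothesis is upgraded to the statement that $B^{-1}(u)\,u^{-1/p}$, $p=\frac{1}{\beta(1-\alpha)}$, is equivalent to a non-increasing function near infinity (resp.\ globally), after which \eqref{march3} and \eqref{eq:infout}, and then the formulas for $B_{\alpha,\beta}^{-1}$ and ${B_{\alpha,\beta}^{\infty}}^{-1}$, follow from the substitution $u=s^{1/\beta}$ together with $B_{\alpha,\beta}^{-1}\simeq G_{\alpha,\beta}$ and \eqref{apr2}. This is more elementary (no conjugation, no appeal to Proposition~\ref{prop:Bconditions}) and makes the converse, which you and the paper handle identically via the criterion that equivalence of $B^{-1}(t)t^{-\sigma}$ to a non-increasing function forces $i_B\ge 1/\sigma$, symmetric with the direct implication.

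The one step you leave as a sketch is exactly where care is needed, and as written it is a leap: in part (i), $h_B(\lambda)$ is a $\limsup$ in $s$, so the bound $h_B(\lambda)\le C\lambda^{1/(p+\epsilon)}$ only yields, for each fixed $\lambda$, a threshold $u_0(\lambda)$ beyond which $B^{-1}(\lambda u)\le C'\lambda^{1/(p+\epsilon)}B^{-1}(u)$; ``unwinding the definition'' does not by itself give a threshold uniform in $\lambda\ge 1$, which is what your monotonicity statement requires. The standard repair is to work with a single ratio: by \eqref{BIprop2} choose $\lambda_0>1$ large with $h_B(\lambda_0)\le\lambda_0^{1/(p+\epsilon)}$, pick $u_0$ and $\epsilon'\in(0,\epsilon)$ with $B^{-1}(\lambda_0 u)\le\lambda_0^{1/(p+\epsilon')}B^{-1}(u)$ for $u\ge u_0$, iterate over powers $\lambda_0^k$ (the threshold persists because $\lambda_0^k u\ge u_0$), and treat intermediate $\lambda\in[\lambda_0^k,\lambda_0^{k+1})$ by monotonicity of $B^{-1}$, at the price of one fixed factor $\lambda_0^{1/(p+\epsilon')}$. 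This yields the uniform two-variable bound and hence the essential monotonicity of $B^{-1}(u)u^{-1/p}$ near infinity. In part (ii) no such argument is needed, since $h_B^\infty$ is a genuine supremum over all $s>0$, so the bound is uniform at once; there the only extra point is \eqref{Bzero}, which you correctly invoke so that $G_{\alpha,\beta}^{\infty}$ is positive and ${B_{\alpha,\beta}^{\infty}}$, \eqref{apr2} make sense. With the iteration written out, the remaining steps -- the trivial upper bound at $s=t$, the splitting of $[1,t]$ at $u_0^{\beta}$ for the lower bound (using that $B^{-1}(t^{1/\beta})t^{\alpha-1}$ stays bounded above for large $t$), and the converse -- are all correct.
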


\begin{proof}We limit ourselves to proving Part (ii). The proof of Part (i) requires minor modifications.
If $B$ is infinite for large values of its argument, then the its generalized inverse $B^{-1}$ is constant near infinity, and equation \eqref{eq:infout} holds trivially. 
\\ In the remaining part of this proof,  we may thus assume that the function $B$ is finite-valued.
 Equation \eqref{eq:infout} is equivalent to
	\begin{equation}
		\inf_{0< s\le t} \widetilde{B}(s)\, s^{\frac{1}{\beta(1-\alpha )-1}}
			\simeq \widetilde{B}(t)\, t^{\frac{1}{\beta(1-\alpha )-1}}
			\quad\GL.
		\label{eq:infouttildaB}
	\end{equation}
	Indeed, owing to \eqref{eq:YoungCompl}, condition \eqref{eq:infout} is equivalent to
	\begin{equation}
		\inf_{0< s\le t} \frac{s^{\beta(\alpha-1)+1}}{\widetilde{B}^{-1} (s)}\, 
			\simeq \frac{t^{\beta(\alpha-1)+1}}{\widetilde{B}^{-1} (t)}
            \quad\GL,
			\label{eq:infout2}
	\end{equation}
    and equation \eqref{eq:infout2} is in turn equivalent to \eqref{eq:infouttildaB}. 
   On the other hand,  by Proposition~\ref{prop:Bconditions}, condition \eqref{eq:lowerindexbig} is equivalent to 
	\begin{equation}
		I_{\widetilde{B}}^\infty < {\eta}\,,
			\label{eq:tildaindexcond}
	\end{equation}
	where we have set  ${\eta}=\frac 1{\beta(\alpha-1)+1}$. The same proposition ensures that condition  \eqref{eq:tildaindexcond} is  equivalent to the inequality
	\begin{equation}
		\int_{t}^{\infty} \widetilde{B}(s)\,s^{-{\eta}-1}\,\d s
			\le \widetilde{B}(kt)\,t^{-{\eta}}
				\quad\GL,
		\label{eq:inttildacond}
	\end{equation}
	for some constant $k>1$.
    Hence it suffices to show that \eqref{eq:inttildacond} implies \eqref{eq:infouttildaB}. To this purpose, denote by $\rho \in [0, t]$ a number satisfying
	\begin{equation*}
		\inf_{0< s\le t} \widetilde{B}(s)\, s^{-{\eta}}
			= \widetilde{B}(\rho t)\, (\rho t)^{-{\eta}} 
			\quad\GL.
	\end{equation*}
    By the same argument as in the derivation (iii) from    (i)   in
	Proposition~\ref{prop:Bconditions},   one has that
	\begin{equation*}
			\widetilde{B}(\rho t)\, (\rho t)^{-\eta} k^{\eta}
					\ge \int_{\rho t/ k}^{\infty} \widetilde{B}(s)\,s^{-\eta-1} \d s
					\ge \int_{\rho t}^{t} \widetilde{B}(s)\,s^{-\eta-1} \d s
					\ge \widetilde{B}(\rho t)\, (\rho t)^{-\eta}  \log\frac{1}{\rho}
					\quad\GL,
	\end{equation*}
 whence
	$k^{\eta} \ge \log\frac{1}{\rho}$, and
	\begin{equation*}
		\rho \ge e^{-k^\eta} > 0
			\quad\GL.
	\end{equation*}
	In the  proof of Proposition~\ref{prop:Bconditions} it is also shown that $\widetilde{B}$ satisfies the $\Delta_2$-condition. % near infinity.
	Hence, there exists  a positive constant $c$ such that
	\begin{equation*}
		\widetilde{B}(\rho t )
			\ge \widetilde{B}\bigl(t e^{-k\eta}\bigr)
			\ge c \widetilde{B}(t)
			\quad\GL.
	\end{equation*}
Consequently,
	\begin{equation*}
		\inf_{0< s\le t} \widetilde{B}(s)\, s^{-\eta}
			= \widetilde{B}(\rho t)\, (\rho t)^{-\eta}
			\ge c\rho^{-\eta} \widetilde{B}(t)\, t^{-\eta}
			\quad\GL,
	\end{equation*}
	 whence \eqref{eq:infouttildaB} follows. 
\\
Finally, if \eqref{eq:infout} is in force, then $B^{-1}(t)\,t^{\beta(\alpha-1)}$ is equivalent to a non-increasing function, and therefore $i_B^\infty \ge \textstyle\frac{1}{\beta(1-\alpha)}$.
\end{proof}

We conclude this section by showing that, under assumption \eqref{march2} or \eqref{eq:lowerindexbig},  the upper Boyd indices of $B$ and $\B$,  or of $B$ and  $\Binf$ are determined by each other.
In what follows,  we adopt  the convention that $\frac{1}{\infty}=0$.

\begin{lemma}
\label{lemm:indices}
Let $B$ be a Young function, and let $\alpha$ and $\beta$ be as in Lemma~\ref{lemm:infout}.
\\{\rm(i)} Assume that condition \eqref{march2} holds. 
Then 
\begin{equation*}
	\frac{1}{I_{\B}} = \alpha + \frac{1}{\beta I_{B}}.
\end{equation*}
In particular, $I_{\B} < 1/\alpha$ if and only if $I_B < \infty$.
\\{\rm(ii)} Assume, in addition, that $B$ satisfies condition \eqref{Bzero}. If
\eqref{eq:lowerindexbig} holds,
then 
\begin{equation}\label{dic15}
	\frac{1}{I_{\Binf}^\infty} = \alpha + \frac{1}{\beta I_{B}^\infty}.
\end{equation}
In particular, $I_{\Binf}^\infty < 1/\alpha$ if and only if $I_B^\infty < \infty$.
\end{lemma}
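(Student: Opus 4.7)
The plan is to reduce both parts of the lemma to a direct computation of the Boyd index of the transformed function from its inverse, using Lemma~\ref{lemm:infout} to bypass the infimum in the definitions of $\G$ and $\Ginf$. Under the hypothesis \eqref{march2} (resp.\ \eqref{eq:lowerindexbig}), Lemma~\ref{lemm:infout} yields the equivalences $\B^{-1}(t) \simeq B^{-1}(t^{1/\beta})\,t^\alpha$ near infinity (resp.\ $\Binf{}^{-1}(t) \simeq B^{-1}(t^{1/\beta})\,t^\alpha$ for all $t>0$). This is the essential input: it factors the inverse of $\B$ (resp.\ $\Binf$) into a power of $t$ times a composition of $B^{-1}$ with a power of $t$.

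For part (ii), I would plug this factorization into the definition of $h_{\Binf}^\infty$, and then perform the change of variables $u = s^{1/\beta}$, which sends $(0,\infty)$ to itself bijectively, to obtain
\begin{equation*}
    h_{\Binf}^\infty(t) \simeq t^\alpha \sup_{u>0} \frac{B^{-1}(u\,t^{1/\beta})}{B^{-1}(u)} = t^\alpha\, h_B^\infty(t^{1/\beta}) \quad \text{for $t > 0$.}
\end{equation*}
For part (i), the same change of variables applied to $\limsup_{s\to\infty}$ gives $h_{\B}(t) \simeq t^\alpha h_B(t^{1/\beta})$ near every $t>0$, since $s\to\infty$ corresponds to $u\to\infty$, and the equivalence from Lemma~\ref{lemm:infout}(i) holds for the relevant (large) arguments.

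Next, I would use the limit characterization of Boyd indices in \eqref{BIprop2} and its analogue for $I_A$. Taking logarithms and using that a multiplicative constant $C$ contributes $\log C/\log t \to 0$ as $t\to 0^+$, one has
\begin{equation*}
    \frac{\log t}{\log h_{\Binf}^\infty(t)} = \frac{1}{\dfrac{\log C}{\log t} + \alpha + \dfrac{1}{\beta}\cdot\dfrac{\log h_B^\infty(t^{1/\beta})}{\log t^{1/\beta}}}\,,
\end{equation*}
and as $t\to 0^+$ the right-hand side converges to $1/(\alpha + 1/(\beta I_B^\infty))$, under the convention $1/\infty = 0$ when $I_B^\infty = \infty$. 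This gives \eqref{dic15}, and the analogous computation with $h_B$ in place of $h_B^\infty$ and $t\to 0^+$ settles part (i). Finally, the ``in particular'' statements follow at once, since $I_{\Binf}^\infty < 1/\alpha$ is equivalent to $\alpha + 1/(\beta I_B^\infty) > \alpha$, i.e.\ to $I_B^\infty < \infty$, and similarly for part (i).

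The main technical obstacle is a small one: ensuring that the multiplicative constants from the equivalence in Lemma~\ref{lemm:infout} do not corrupt the limit. This is handled by the observation that $\log C/\log t\to 0$; but one must also check that the factorization \emph{commutes} with the supremum (resp.\ $\limsup$), which follows immediately because both numerator and denominator of the ratio $\B^{-1}(st)/\B^{-1}(s)$ are controlled, above and below, by fixed constants times the corresponding ratio of the factorized expression.
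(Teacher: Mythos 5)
Your proof is correct and follows essentially the same route as the paper: both invoke Lemma~\ref{lemm:infout} to get $\Binf{}^{-1}(t)\simeq B^{-1}(t^{1/\beta})\,t^{\alpha}$ (resp.\ the near-infinity version for $\B$), deduce $h_{\Binf}^{\infty}(t)\simeq t^{\alpha}\,h_{B}^{\infty}(t^{1/\beta})$ by the same change of variables in the supremum, and then read off \eqref{dic15} from the definition of the upper Boyd index. Your extra care with the multiplicative constants and with the $\limsup$ in part (i) only spells out what the paper leaves implicit.
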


\begin{proof} As in Lemma \ref{lemm:infout}, we only prove Part (ii). By Lemma \ref{lemm:infout}, assumption \eqref{eq:lowerindexbig} implies equation
 \eqref{dic14}. Thereby,
\begin{align*}
    h_{\Binf} ^\infty(t)
        & \simeq \sup_{s>0}
            \frac{{\Binf}^{-1}(st)}
                {{\Binf}^{-1}(s)}
            \simeq t\, \sup_{s>0}
            \frac{ B^{-1} \bigl( (st)^{1/\beta} \bigr)\, (st)^{\alpha-1}}
                { B^{-1} \bigl( s^{1/\beta} \bigr)\, s^{\alpha-1}}
            \simeq t^\alpha\, h_B^\infty \bigl( t^{1/\beta} \bigr) \quad \hbox{for $t>0$.}
\end{align*}
Equation \eqref{dic15} is therefore a consequence of the definition of global upper Boyd index.
\end{proof}

\section{Proof of the main results} \label{proofs main}

A key step in the proof of our main results is the solution of the optimal Orlicz domain space $L^A(0,1)$ for the boundedness of 
 the Hardy type operator  $H_{\alpha, \beta}$ in 
\begin{equation}\label{boundH}
	H_{\alpha, \beta}\colon L^A(0,1) \to L^B(0,1)\,,
\end{equation}
for a given space $L^B(0,1)$. 
Indeed, its boundedness properties characterize, via appropriate reduction principles, the Sobolev type embeddings considered in the present paper. This is the objective of  the following lemma.

\begin{lemma}\label{lemm:OptimalityH}
Let $0<\alpha<1$, $\beta>0$, and $\alpha+1/\beta\ge 1$. Suppose that $B$ is a Young function and let $\B$ be the Young function defined by \eqref{Bab}. If
\begin{equation}
    I_{\B} < \frac{1}{\alpha},
        \label{eq:indexcond}
\end{equation}
then
\begin{equation}
    H_{\alpha, \beta}\colon L^{\B}(0,1) \to L^B(0,1)\,,
\label{eq:HLBDtoLB}
\end{equation}
and $L^{\B}(0,1)$ is the optimal (i.e.\ largest) Orlicz domain space that renders  \eqref{eq:HLBDtoLB} true.
\\
Conversely, if \eqref{eq:indexcond} is not satisfied, then no optimal Orlicz domain space exists in \eqref{eq:HLBDtoLB}, in the sense that any Orlicz space  $L^A(0,1)$ which makes  \eqref{boundH} true can be replaced with a strictly larger Orlicz space from which the operator $H_{\alpha, \beta}$  is still bounded into $L^B(0,1)$.
\end{lemma}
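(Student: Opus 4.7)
The proof splits naturally into three parts: sufficiency of $I_{\B}<1/\alpha$, optimality of $L^{\B}(0,1)$ when that condition holds, and non-existence of an optimal Orlicz domain otherwise. The main technical ingredients are the various reformulations of the upper Boyd index collected in Proposition~\ref{prop:Bconditions}, the equivalence $\B^{-1}\simeq\G$ from Remark~\ref{rem:AG}, the norm formula \eqref{chiE}, and the elementary fact that $A^{-1}(u)/u$ is non-increasing for every Young function~$A$.

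For the sufficiency, I would translate \eqref{eq:indexcond} via Proposition~\ref{prop:Bconditions}(i) applied to $E=\B$ and exponent $1/\alpha$ into the integral growth condition
\begin{equation*}
	\int_t^\infty \frac{\B(s)}{s^{1/\alpha+1}}\,\d s \leq \frac{\B(kt)}{t^{1/\alpha}}\quad \text{near infinity,}
\end{equation*}
for some $k>1$. Combined with the equivalence $\B^{-1}\simeq\G$ and a standard decreasing-rearrangement estimate for $H_{\alpha,\beta}$, this reduces the norm bound $\|H_{\alpha,\beta}f\|_{L^B(0,1)}\le C\|f\|_{L^{\B}(0,1)}$ to a one-dimensional weighted Hardy inequality on $(0,1)$, whose validity is a direct computation once the integral condition above is available.

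For the optimality, assume $L^A(0,1)$ is any Orlicz space rendering \eqref{boundH} true, and test the Hardy inequality with the extremal step functions $f_\sigma = A^{-1}(1/\sigma)\,\chi_{(0,\sigma)}$, for which \eqref{chiE} gives $\|f_\sigma\|_{L^A(0,1)}=1$. Bounding $H_{\alpha,\beta}f_\sigma(s)$ from below on the set $\{s^\beta \le \sigma/2\}$ and applying \eqref{chiE} once more in $L^B(0,1)$ produces, after setting $u=1/\sigma$, the necessary inequality
\begin{equation*}
	A^{-1}(u) \le C\, u^\alpha B^{-1}\bigl(u^{1/\beta}\bigr)\quad \text{for $u$ large.}
\end{equation*}
Since $A^{-1}(u)/u$ is non-increasing for any Young function, I would then upgrade this diagonal bound: for $1\le u\le t$, $A^{-1}(t) \le (t/u)A^{-1}(u) \le C\,t\,u^{\alpha-1}B^{-1}(u^{1/\beta})$, and taking the infimum over $u\in[1,t]$ yields $A^{-1}(t)\le C\,\G(t)\simeq C\,\B^{-1}(t)$ near infinity. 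By \eqref{eq:YoungCompl} this is equivalent to $\B$ being dominated by $A$ near infinity, and hence by \eqref{inclusion} to the embedding $L^A(0,1)\to L^{\B}(0,1)$. Thus $L^{\B}(0,1)$ is the largest admissible Orlicz domain.

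For the non-existence under $I_{\B}\ge 1/\alpha$, the integral condition above fails for $\B$; since the Hardy inequality characterization underlying the sufficiency step is known to be both necessary and sufficient, the operator $H_{\alpha,\beta}$ is \emph{not} bounded from $L^{\B}(0,1)$ into $L^B(0,1)$. Consequently any admissible $L^A(0,1)$ must satisfy $L^A(0,1)\subsetneq L^{\B}(0,1)$ strictly, which by the optimality step means $A^{-1}\le C\,\B^{-1}$ near infinity, with an asymptotic gap. I would then construct a Young function $\widehat A$ with $\widehat A\le A$ near infinity but not equivalent to $A$, by flattening $A$ affinely on a sequence of intervals chosen along the gap directions; maintaining the inequality $\widehat A^{-1}\le C'\,\G$ near infinity will preserve the Hardy boundedness via the sufficiency criterion, while $\widehat A \lneq A$ strictly enlarges the Orlicz space. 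The principal obstacle is precisely this perturbation: achieving simultaneously the strict Orlicz enlargement $L^A\subsetneq L^{\widehat A}$, the convexity of $\widehat A$, and the preservation of the Hardy condition requires a careful explicit construction exploiting the quantitative gap forced by $I_{\B}\ge 1/\alpha$.
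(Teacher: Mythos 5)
Your high-level decomposition into sufficiency, optimality, and non-existence is the right shape, and your optimality argument (testing on $f_\sigma = A^{-1}(1/\sigma)\chi_{(0,\sigma)}$, using \eqref{chiE} to extract the diagonal bound $A^{-1}(u)\le Cu^{\alpha}B^{-1}(u^{1/\beta})$, then upgrading via the monotonicity of $A^{-1}(u)/u$ to $A^{-1}(t)\lesssim\G(t)\simeq\B^{-1}(t)$ near infinity) is a clean and correct self-contained derivation. The paper instead outsources this to \citep[Theorem A]{Musil}, so that piece of your proposal is a genuinely different and perfectly workable route.

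However, there are two genuine gaps. First, the sufficiency step is not a ``direct computation.'' Translating $I_{\B}<1/\alpha$ into the integral condition via Proposition~\ref{prop:Bconditions} tells you something about $\B$ alone; it does not by itself hand you the norm bound $\|H_{\alpha,\beta}f\|_{L^B(0,1)}\le C\|f\|_{L^{\B}(0,1)}$. The paper's route is to first obtain a weak-type estimate $H_{\alpha,\beta}\colon L^{\B}(0,1)\to M^B(0,1)$ (via \citep[Theorem A]{Musil}), and then upgrade it to the strong estimate \eqref{eq:HLBDtoLB} by a nontrivial weak-to-strong argument: the scaling of the pair $(A_N,B_N)$ in \eqref{ABL} and the dyadic discretization of $Hf$ in Lemma~\ref{lemm:HOrlOrl}, followed by Corollary~\ref{cor:HOrlOrl}, which also handles the reduction from $(0,\infty)$ to $(0,1)$. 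This is the technical heart of the lemma and cannot be compressed to a one-line Hardy computation; you also need to treat separately the degenerate case \eqref{eq:Btriv}, where $\B(t)\simeq t$ and boundedness follows from the classical inclusion $L^1(0,1)\to L^{1/(\beta(1-\alpha))}(0,1)\to L^B(0,1)$ rather than from the general theory.

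Second, your non-existence step has a real hole, which you acknowledge yourself. Showing that condition \eqref{dic9} (equivalently $I_{\B}<1/\alpha$) fails tells you only that $\B$ itself is not an admissible domain Young function; the actual claim is that for \emph{every} admissible $A$ there is a strictly larger admissible $\widehat A$, and the ``flattening $A$ affinely on a sequence of intervals while preserving the Hardy condition and convexity'' is precisely the construction that needs to be carried out. The paper does not rediscover it either: it cites \citep[Theorem A]{Musil}, which contains this construction for the weak-type problem, and then uses Corollary~\ref{cor:HOrlOrl} to show that the weak and strong problems have the same optimal domains, hence the same non-existence phenomenon. Without either reproducing that construction or invoking the weak-type result together with the weak-to-strong equivalence, your non-existence part is not a proof.
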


A proof of Lemma~\ref{lemm:OptimalityH} in turn combines  \cite[Theorem B]{Musil}, dealing with weak type estimates for the operator $H_{\alpha , \beta}$ in Orlicz spaces, with a result, contained in Lemma \ref{lemm:HOrlOrl} below  and its Corollary \ref{cor:HOrlOrl},   showing that any weak type estimate for  $H_{\alpha , \beta}$ in Orlicz spaces  is  equivalent to  a corresponding  strong  estimate.  In fact, we also need a variant of \cite[Theorem B]{Musil}, which is the object of the next proposition, where the Hardy-type operator $H_{\alpha, \beta}$ is replaced with the operator $H_{\alpha, \beta}^\infty$, acting on spaces defined in the entire half-line $(0, \infty)$, and defined as 
\begin{equation}\label{Habinf}
    H_{\alpha, \beta}^\infty f (s)
        = \int_{s^\beta}^{\infty} f(r)\,r^{\alpha - 1}\,\d r
        \quad \text{for $s>0$,}
\end{equation}
for any function $f\in \mathcal M(0,\infty)$ whenever the integral in \eqref{Habinf} is defined. Importantly, it is also necessary  to keep track of the dependence of the  constants in the inequalities involving $H_{\alpha, \beta}^\infty$.

\def\LA{{L^A(0,\infty)}}
\def\MB{{M^B(0,\infty)}}
\begin{proposition} \label{prop:GWE}
Let $0<\alpha<1$, $\beta>0$ and $\alpha+1/\beta\ge 1$,
and let $A$ and $B$ be Young functions. Assume 
 that $B$ fulfills condition \eqref{Bzero}.
Then the following two assertions are equivalent.
\begin{enumerate}[\rm (i)]
\item There exists a constant $C_1>0$ such that
\begin{equation} \label{eq:HLAtoMEinf}
	\| H_{\alpha, \beta}^\infty f \|_\MB
		\le C_1 \|f\|_\LA
\end{equation}
for every $f\in \LA$.
\item
There exists a constant $C_2$ such that
\begin{equation} \label{intcondinf}
	\int_0^t \frac{\widetilde{A}(s)}{s^{{1/(1-\alpha)}+1}}\,\d s
		\le \frac{\widetilde{\Binf}(C_2t)}{t^{1/(1-\alpha)}}
			\quad \text{for $t>0$,}
\end{equation}
where the $\Binf$ is the Young function defined in \eqref{Babinf}.
\end{enumerate}
Moreover the constants $C_1$ and $C_2$ only depend on each other and on $\alpha$.
\end{proposition}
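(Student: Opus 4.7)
The plan is to parallel the proof of \cite[Theorem B]{Musil}, which handles the analogous equivalence on $(0,1)$ (weak-type estimate for $H_{\alpha,\beta}$ into $M^B(0,1)$ iff an integral condition on $\widetilde A$ and $\widetilde{B_{\alpha,\beta}}$ near infinity), and to adapt it to the half-line. Three new features must be addressed: first, the integration in $H_{\alpha,\beta}^\infty$ now ranges to $+\infty$, so the small-$r$ behavior of the involved Young functions enters through the global integral condition \eqref{intcondinf} (integration from $0$); second, assumption \eqref{Bzero} is precisely what guarantees that $G_{\alpha,\beta}^\infty$ is strictly positive on $(0,\infty)$ and hence, by \eqref{apr2}, that $B_{\alpha,\beta}^\infty$ is a genuine Young function with ${B_{\alpha,\beta}^\infty}^{-1}\simeq G_{\alpha,\beta}^\infty$ throughout $(0,\infty)$; third, the constants $C_1$ and $C_2$ must be shown to depend only on each other and on $\alpha$.

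For the implication (i) $\Rightarrow$ (ii), the plan is to test \eqref{eq:HLAtoMEinf} on the one-parameter family
\[
f_t(r)=\frac{\widetilde A^{-1}(1/r)}{r}\,\chi_{(0,t^\beta)}(r),\qquad t>0,
\]
(or on simple rescalings thereof). The $L^A(0,\infty)$-norm of $f_t$ is controlled, through \eqref{eq:YoungCompl}, by a multiple of $\bigl(\int_0^{t^\beta}\widetilde A(s)/s^2\,ds\bigr)^{1-\alpha}$ or similar, while evaluating the $M^B$-norm of $H_{\alpha,\beta}^\infty f_t$ at the level $t$ picks up the expression $\tfrac{1}{t}\int_0^t (H_{\alpha,\beta}^\infty f_t)(s)\,ds/B^{-1}(1/t)$. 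A change of variables $s=r^\beta$ in the inner integral and the elementary identity $\int\widetilde A(s)s^{-1/(1-\alpha)-1}\,ds$ appearing from Fubini convert this bound into \eqref{intcondinf}, with $C_2$ proportional to $C_1$ by a factor depending only on $\alpha$.

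For the implication (ii) $\Rightarrow$ (i), the plan is to use duality in Orlicz spaces together with the characterization of ${B_{\alpha,\beta}^\infty}^{-1}$. Without loss of generality $f\ge 0$ and non-increasing; then $H_{\alpha,\beta}^\infty f$ is non-increasing, so $(H_{\alpha,\beta}^\infty f)^*(t)=H_{\alpha,\beta}^\infty f(t^{1/\beta})$ and the $M^B$-norm reduces to controlling $H_{\alpha,\beta}^\infty f(t^{1/\beta})/B^{-1}(1/t)$ uniformly in $t$. By the H\"older-type inequality \eqref{holder},
\[
H_{\alpha,\beta}^\infty f(t^{1/\beta})\le 2\,\|f\|_{L^A(0,\infty)}\,\bigl\|r^{\alpha-1}\chi_{(t,\infty)}(r)\bigr\|_{L^{\widetilde A}(0,\infty)}.
\]
A standard integration-by-parts / Fubini manipulation shows that the Luxemburg norm on the right is equivalent, up to a constant depending only on $\alpha$, to the quantity $\bigl(\int_0^t \widetilde A(s)\,s^{-1/(1-\alpha)-1}\,ds\bigr)^{1-\alpha}$ multiplied by $t^{\alpha}$. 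Condition \eqref{intcondinf} bounds this by $C_2\,\bigl(\widetilde{B_{\alpha,\beta}^\infty}(C_2 t)\bigr)^{1-\alpha}/t^{1-\alpha}\cdot t^\alpha$, which, after applying \eqref{eq:YoungCompl} to $B_{\alpha,\beta}^\infty$ and using \eqref{apr2} together with the identity $B^{-1}(1/t)\simeq{B_{\alpha,\beta}^\infty}^{-1}(1/t)\,t^{-\alpha/\alpha}$ implicit in \eqref{dic14} (valid under our running hypotheses), collapses to a constant multiple of $B^{-1}(1/t)$.

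The main obstacle is the careful bookkeeping of constants and of the role of \eqref{Bzero}: one must verify that every equivalence above ($\simeq$ symbols coming from \eqref{eq:YoungCompl}, \eqref{apr2}, the passage between $f^*$ and $f^{**}$ via \eqref{f*f**}, and the rearrangement of the dual norm of the weight $r^{\alpha-1}$) absorbs only constants depending on $\alpha$, not on $\beta$, $A$, or $B$. Once this tracking is performed, the equivalence of (i) and (ii) with the asserted dependence of $C_1$ and $C_2$ follows, and the result is poised to be combined with the reduction principle for \eqref{sobrn} and with Lemmas \ref{lemm:infout}--\ref{lemm:indices} in the proof of Theorem \ref{thm:domainRn}.
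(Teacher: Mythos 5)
Your sufficiency direction (ii) $\Rightarrow$ (i) contains two breaking errors. First, the claimed formula for the dual norm is false: the Luxemburg norm $\|r^{\alpha-1}\chi_{(t,\infty)}\|_{L^{\widetilde A}(0,\infty)}$ can be computed exactly and equals $t^{\alpha-1}/F^{-1}(1/t)$, where $F(\tau)=\frac{1}{1-\alpha}\,\tau^{1/(1-\alpha)}\int_0^\tau \widetilde A(s)\,s^{-1/(1-\alpha)-1}\,\d s$; this is \emph{not} equivalent, with constants depending only on $\alpha$, to $t^{\alpha}\bigl(\int_0^t \widetilde A(s)\,s^{-1/(1-\alpha)-1}\,\d s\bigr)^{1-\alpha}$. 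Already for $\widetilde A(s)=s^q$ with $q>1/(1-\alpha)$ the two quantities are powers of $t$ with different exponents ($\alpha-1+1/q$ versus $\alpha-1+q(1-\alpha)$), so no integration-by-parts or Fubini manipulation can produce your equivalence; the inverse function $F^{-1}$ cannot be eliminated, and it is precisely through $F^{-1}$, combined with \eqref{eq:YoungCompl} and \eqref{apr2}, that condition \eqref{intcondinf} enters. Second, you invoke \eqref{dic14}, but that two-sided relation is proved only under the additional index assumption \eqref{eq:lowerindexbig}, i.e.\ $i_B^\infty>1/(\beta(1-\alpha))$, which is not among the hypotheses of the proposition (only \eqref{Bzero} is assumed). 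What is available unconditionally, and in fact suffices here, is the one-sided bound ${B_{\alpha,\beta}^{\infty}}^{-1}(t)\le c\,B^{-1}(t^{1/\beta})\,t^{\alpha}$ coming from \eqref{apr2} and the definition of $G_{\alpha,\beta}^{\infty}$ as an infimum; as written, your argument rests on an unavailable equivalence.

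There is also a structural gap in the same direction: the norm in $M^B(0,\infty)$ is defined through $(H_{\alpha,\beta}^\infty f)^{**}$, whereas your plan only controls the sup of $(H_{\alpha,\beta}^\infty f)^{*}$ against $B^{-1}(1/t)$ (and note that for $f\ge 0$ the function $H_{\alpha,\beta}^\infty f$ is automatically non-increasing, so its rearrangement at $t$ is its value at $t$, not at $t^{1/\beta}$). A bound on the $f^{*}$-type supremum does not imply a bound on the $f^{**}$-type supremum in general (the case $B(t)=t$, i.e.\ weak-$L^1$, shows the failure), so the pointwise H\"older estimate does not close the argument; one must either estimate $(H_{\alpha,\beta}^\infty f)^{**}$ directly, which after Fubini brings in a different weight whose $L^{\widetilde A}$-norm must be computed, or, as the paper does, dualize \eqref{eq:HLAtoMEinf} via \cite[Lemma 8.1]{CPS} together with \eqref{holder}, reduce the dual inequality to characteristic functions by \cite[Proposition 3.4]{Musil}, and compute both sides exactly. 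Finally, in your necessity direction (i) $\Rightarrow$ (ii), testing at a single level only yields an inequality involving $B^{-1}$ at one point; to reach $\widetilde{B_{\alpha,\beta}^{\infty}}$ in \eqref{intcondinf} you still need the self-improvement to the infimum defining $G_{\alpha,\beta}^{\infty}$ (exploiting that $1/F^{-1}$ is non-increasing) before conjugating, and this step is absent from your plan.
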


\begin{proof}
\def\LTA{{L^{\widetilde{A}}(0,\infty)}}
\def\MBD{{(M^B)'(0,\infty)}}
A duality argument (see e.g. \cite[Lemma 8.1]{CPS}), combined with equation \eqref{holder}, tells us that inequality  \eqref{eq:HLAtoMEinf} is equivalent to
\begin{equation} \label{upr1}
	\biggl\| t^{{\alpha}-1}\int_0^{t^{\frac{1}{\beta}}} f(s)\,\d s \biggr\|_\LTA
		\le C_1 \|f\|_\MBD
\end{equation}
for every $f\in \MBD$, where $\MBD$ is defined as in \eqref{MB'}. We claim that inequality \eqref{upr1} is in turn  equivalent to
\begin{equation} \label{upr2}
	\biggl\| t^{{\alpha}-1}\int_0^{t^{\frac{1}{\beta}}} f^*(s)\,\d s \biggr\|_\LTA
		\le C_1 \|f^*\|_\MBD
\end{equation}
for every $f\in \MBD$. Indeed, the fact that \eqref{upr1} implies \eqref{upr2} is trivial, whereas the reverse implication 
 follows from a basic property of rearrangements \cite[Lemma 2.1, Chapter 2]{BS}.
Next, by \citep[Proposition 3.4]{Musil}, inequality \eqref{upr2} is equivalent to
the same inequality restricted just to characteristic functions of the
sets of finite measure, namely to the inequality  
\begin{equation} \label{upr3}
	\biggl\| t^{{\alpha}-1}\int_0^{t^{\frac{1}{\beta}}} \chi_{(0,r)}(s)\,\d s \biggr\|_\LTA
		\le C_1 \|\chi_{(0,r)}\|_\MBD
		\quad\text{for $r>0$}.
\end{equation}
Owing to the equality
$$ \|\chi_{(0,r)}\|_{M^B(0, \infty)}\,\|\chi_{(0,r)}\|_\MBD =r \quad \hbox{for $r>0$}$$
(see \cite[Theorem 5.2, Chapter 2]{BS}), and to equation
\eqref{phiM} with $A$ replaced by $B$, 
\begin{equation} \label{upr4}
	\|\chi_{(0,r)}\|_\MBD
	 \simeq r\, B^{-1}(1/r)  \quad \hbox{for $r>0$,}
\end{equation}
up to absolute equivalence constants.
On the other hand, computations show that
\begin{equation}\label{april1}
	\biggl\| t^{{\alpha}-1} \int_0^{t^\frac{1}{\beta}} \chi_{(0,r)}(s)\,\d s \biggr\|_\LTA
		\simeq  r\, \|t^{{\alpha}-1} \chi_{(r^\beta,\infty)}(t) \|_\LTA
			\quad\text{for $r>0$,}
\end{equation}
up to equivalence constants depending on $\alpha$. %where in the lower bound the constant equals to 1
%and in the upper bound the constant depends only on $\alpha$.
%Next, by \citep[Lemma 3.1]{Musil}, we have that
The right-hand side of \eqref{april1} is finite if and only if the integral on the left-hand side of \eqref{intcondinf} converges. Moreover, if this is the case, then
\begin{equation} \label{upr5}
	\|t^{{\alpha}-1} \chi_{(r^\beta,\infty)}(t) \|_\LTA
		= \frac{r^{\beta(\alpha-1)}}{F^{-1}(r^{-\beta})}
		\quad\text{for $r>0$,}
\end{equation}
where $F\colon (0, \infty) \to [0, \infty)$ is the (increasing) function  defined by
\begin{equation*}
	F(t)=\frac{1}{1-\alpha}\, t^{\frac{1}{1-\alpha}}
		\int_0^t \frac{\widetilde{A}(s)}{s^{{1/(1-\alpha)}+1}}\,\d s
			\quad\text{for $t>0$}.
\end{equation*}
%Note, that \eqref{upr5} holds also when $A$ or $\widetilde{A}$
%attains infinity, although these cases are not considered in the
%mentioned reference.
Combining \eqref{upr4}, \eqref{upr5} and \eqref{upr3} tells us that  \eqref{upr2}, and hence \eqref{eq:HLAtoMEinf}, is equivalent to
the existence of a constant $C_3$, depending on $\alpha$, such that  
%\begin{equation} \label{upr6}
	%\frac{1}{F^{-1}(r^{-\beta})}
	%	\le C_3 B^{-1} \left( \frac{1}{r} \right)\, r^{\beta(1-\alpha)} \quad \hbox{for $r>0$,}
%\end{equation}
%or, equivalently, 
\begin{equation} \label{upr7}
	\frac{1}{F^{-1}(t)}
		\le C_3 B^{-1} (t^{1/\beta})\, t^{\alpha-1}
		\quad\text{for  $t>0$}.
\end{equation}
Since $F$ is increasing, inequality \eqref{upr7} is equivalent to
%\begin{equation} \label{upr9}
	%\frac{1}{F^{-1}(t)}
		%\le C_3 \inf_{0<s<t} B^{-1} (s^{1/\beta})\, s^{\alpha-1}
		%\quad\text{for  $t>0$},
%\end{equation}
%which is, by the definition of $\G$,
%nothing than
\begin{equation*}
	\frac{1}{F^{-1}(t)}
		\le C_3 \frac{\Ginf (t)}{t}
		\quad\text{for  $t>0$.}
\end{equation*}
Finally, by equations \eqref{apr2} and \eqref{eq:YoungCompl}, 
inequality  \eqref{eq:HLAtoMEinf} is equivalent to 
\begin{equation}\label{march10}
	\widetilde{\B}^{-1}(t)
		\le C_4 {F^{-1}(t)}
		\quad\text{for  $t>0$,}
\end{equation}
for some constant $C_4$ depending on $\alpha$. Hence, the conclusion follows, on taking inverses of both sides of \eqref{march10}.
\end{proof}

\begin{lemma} 
\label{lemm:HOrlOrl}
Let $\alpha$, $\beta$, $A$ and $B$ be as in Proposition \ref{prop:GWE}.
If
\begin{equation}
    H_{\alpha, \beta}^\infty \colon L^A(0,\infty) \to M^B(0,\infty),
        \label{eq:HLAtoME}
\end{equation}
then
\begin{equation}
    H_{\alpha, \beta}^\infty \colon L^A(0,\infty) \to L^B(0,\infty).
        \label{eq:HLAtoLE2}
\end{equation}
Moreover, the norms of the operator $H_{\alpha, \beta}^\infty$ in \eqref{eq:HLAtoME} and \eqref{eq:HLAtoLE2}  are equivalent, up to multiplicative constants independent of $A$ and $B$.
\end{lemma}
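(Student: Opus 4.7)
The plan is to argue in two stages: first translate the weak-type hypothesis into the integral condition \eqref{intcondinf} via Proposition~\ref{prop:GWE}, and then show that this same integral condition already forces the strong-type bound. Since the embedding $L^B(0,\infty)\hookrightarrow M^B(0,\infty)$ holds with an absolute operator norm for every Young function $B$, one direction of the asserted norm equivalence is trivial; the content of the lemma is the reverse implication with constants depending only on $\alpha$ and $\beta$.

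For the first stage, I would apply Proposition~\ref{prop:GWE} directly to \eqref{eq:HLAtoME}: this produces \eqref{intcondinf} with a constant $C_2$ depending only on the weak-type operator norm $C_1$ and on $\alpha$. For the second stage, I would pass to the dual formulation via the H\"older-type inequality \eqref{holder}: the strong-type bound $\|H_{\alpha,\beta}^\infty f\|_{L^B(0,\infty)}\le C\|f\|_{L^A(0,\infty)}$ is equivalent, up to an absolute factor of~$2$, to the adjoint inequality $\|(H_{\alpha,\beta}^\infty)^{*} g\|_{L^{\widetilde A}(0,\infty)}\le C\|g\|_{L^{\widetilde B}(0,\infty)}$ for the formal adjoint $(H_{\alpha,\beta}^\infty)^{*}g(r)=r^{\alpha-1}\int_0^{r^{1/\beta}} g(s)\,\d s$. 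The adjoint is itself a Hardy-type operator, and I would verify that its Orlicz boundedness between the conjugate spaces is also characterized by \eqref{intcondinf}, by reducing to nonincreasing $g$ (using positivity and the Hardy–Littlewood inequality), exploiting that $(H_{\alpha,\beta}^\infty)^{*} g$ is then nondecreasing in $r$, and performing a dyadic splitting of the integration range whose convergence is governed by item (iv) of Proposition~\ref{prop:Bconditions} applied to $\widetilde A$ and $\widetilde{\Binf}$.

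The principal obstacle is the uniformity of constants in this second stage: the round-trip ``weak bound $\Rightarrow$ \eqref{intcondinf} $\Rightarrow$ strong bound'' must blow up the operator norm by no more than a factor depending only on $\alpha$ and $\beta$. This is ultimately guaranteed by the fact that each implication in Propositions~\ref{prop:Bconditions} and~\ref{prop:GWE} is quantitative, with constants tracked explicitly in terms of $\alpha$, $\beta$, $C_1$, $C_2$ alone; in particular, the $\Delta_2$-type constants implicit in the pointwise Young-function inequality \`{a} la item (iii) of Proposition~\ref{prop:Bconditions} depend on $\alpha$ only, which ensures that the dyadic summation producing the strong modular estimate yields a geometric series whose ratio is likewise $\alpha$-dependent only.
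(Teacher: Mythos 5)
Your first stage coincides with the paper's starting point, but your second stage -- which is the entire content of the lemma -- has a genuine gap. Asserting that the dual strong-type bound $\|(H_{\alpha,\beta}^\infty)^*g\|_{L^{\widetilde A}(0,\infty)}\le C\|g\|_{L^{\widetilde B}(0,\infty)}$ is ``also characterized by \eqref{intcondinf}'' is just a dual restatement of what has to be proved: the reduction to characteristic functions that underlies the characterization of the \emph{weak}-type bound works because the dual norm there is the associate norm of $M^B(0,\infty)$ (cf.\ \eqref{upr3} and \citep[Proposition 3.4]{Musil}); once the dual norm is $\|g\|_{L^{\widetilde B}}$, no such reduction is available, and you offer no substitute. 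The one concrete mechanism you name -- a geometric series driven by item (iv) of Proposition~\ref{prop:Bconditions} applied to $\widetilde A$ and $\widetilde{B_{\alpha,\beta}^{\infty}}$ -- presupposes a Boyd-index condition of the type $I<1/\alpha$ (equivalently $i_{\widetilde{\phantom{B}}}>1/(1-\alpha)$) that is \emph{not} among the hypotheses: the lemma must hold, with absolute constants, even when $I_{B_{\alpha,\beta}}=1/\alpha$, which is exactly the regime of Lemma~\ref{lemm:OptimalityH} where no optimal Orlicz domain exists and yet weak (hence strong) bounds hold for many non-optimal $A$; condition \eqref{intcondinf} is a relation between two different Young functions and does not self-improve into any such $\Delta_2$/index property. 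A further slip: $(H_{\alpha,\beta}^\infty)^*g^*$ need not be nondecreasing -- for $g^*=\chi_{(0,1)}$ it equals $r^{\alpha-1+1/\beta}$ for $r\le1$ but $r^{\alpha-1}$, a decreasing function, for $r\ge1$ -- so the monotonicity you plan to exploit fails.

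For comparison, the paper's proof is duality-free and avoids all index conditions by a scaling-plus-decomposition argument, and this is the idea your proposal is missing. One rescales, setting $A_N=A/N$ and $B_N(t)=N^{-1/\beta}B(tN^{-\alpha})$, and observes through Proposition~\ref{prop:GWE} that \eqref{intcondinf} is invariant under this scaling, so the weak-type operator norm is uniform in $N$; choosing $N=\int_0^\infty A(f(r))\,\d r\le1$ turns this into the modular weak-type inequality $|\{H_{\alpha,\beta}^\infty f>t\}|\,B(t/C)\le\bigl(\int_0^\infty A(f)\,\d r\bigr)^{\alpha+1/\beta}$. Then $f$ is split into blocks $f_k=f\chi_{[s_k^\beta,s_{k-1}^\beta)}$ determined by the level points $H_{\alpha,\beta}^\infty f(s_k)=2^k$, on which the operator acts essentially independently ($H_{\alpha,\beta}^\infty f_k\ge 2^{k-1}$ on $[s_{k+1},s_k)$), and the modular weak inequality is applied to each block; summing, and using $\alpha+1/\beta\ge1$ to bound the sum of the powers by the power of the sum, yields $\int_0^\infty B\bigl(H_{\alpha,\beta}^\infty f/(4C)\bigr)\,\d s\le\bigl(\int_0^\infty A(f)\,\d r\bigr)^{\alpha+1/\beta}\le 1$, which is \eqref{eq:HLAtoLE2} with constants independent of $A$ and $B$. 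Without an argument of this kind (or a complete alternative), your second stage remains an assertion of the statement rather than a proof of it.
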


\begin{proof}
 Throughout  this proof, we adopt the abridged notation $H$ for  $H_{\alpha, \beta}^\infty$. 
\\Given $N>0$, define the  Young functions $A_N$ and $B_N$ as 
\begin{equation}\label{ABL}
	A_N(t) = \frac{A(t)}{N}
		\quad\text{and}\quad
	B_N(t) = \frac{1}{N^{{1/\beta}}} B\bigl(tN^{-\alpha}\bigr) \quad \hbox{for $t \geq 0$.}
\end{equation}
We claim that  equation \eqref{eq:HLAtoME} implies that
	\begin{equation}
			H\colon L^{A_N}(0,\infty) \to M^{B_N}(0,\infty)\,,
					\label{eq:HLALtoMEL}
	\end{equation}
	with  operator norm independent of $N$. To prove this claim,  we make use of Proposition~\ref{prop:GWE}, which tells us that  \eqref{eq:HLAtoME}
	is equivalent to the existence of a positive constant $C$ such that
	\begin{equation}
			\int_0^t \frac{\widetilde{A}(s)}{s^{{1/(1-\alpha)}+1}}\,\d s
					\le \frac{\widetilde{\Binf}(Ct)}{t^{1/(1-\alpha)}}
							\quad \text{for $t>0$,}
					\label{eq:redCond}
	\end{equation}
	where $\Binf$ is the Young function defined by \eqref{Babinf}.
One can verify that the function $\Beinf{(B_N)}$, associated with $B_N$ as in \eqref{Babinf}, satisfies
    \begin{equation*}
        \Beinf{(B_N)} = \frac{\Binf}{N}\,,
    \end{equation*}
		and that inequality \eqref{eq:redCond} holds with $A$ and $\Binf$ replaced by
		$A_N$ and $\Beinf{(B_N)}$, respectively, with the same constant $C$. 
 Proposition~\ref{prop:GWE}
		again tells us that \eqref{eq:HLALtoMEL} holds, with operator norm independent
		of $N$.
	\\
    Now, given any  function  $f\in \mathcal M_+ (0,\infty)$ such that
     \begin{equation}
        0<\int_{0}^{\infty} A\bigl( f(r) \bigr)\,\d r \le 1\,,
            \label{eq:Lle1}
    \end{equation}
    set
  \begin{equation*}
        N = \int_{0}^{\infty} A\bigl( f(r) \bigr)\,\d r.
    \end{equation*}
    Thanks to \eqref{eq:HLALtoMEL}, we have that
    \begin{equation}\label{dic3}
        \| Hf \|_{M^{B_N}(0,\infty)}
            \le C \|f\|_{L^{A_N}(0,\infty)}
            \le C\,,
    \end{equation}
    for some constant $C$ independent of $N$ and $f$, since, by the very definition of Luxemburg norm in Orlicz spaces,
    \begin{equation}\label{dic4}
        \|f\|_{L^{A_N}(0,\infty)} \leq 1.
    \end{equation}
   Equations \eqref{dic3}--\eqref{dic4},  inequality \eqref{f*f**} and  equation \eqref{ABL} tell us
    \begin{equation*}
         C\geq \| Hf \|_{M^{B_N}(0,\infty)}
            \ge \sup_{0<t<\infty} \frac{t}{B_N^{-1 } \bigl({\textstyle \frac{1}{|\{ Hf > t \} |}} \bigr)}
            = \sup_{0<t<\infty} \frac{t}{N^{\alpha} B^{-1}\Bigl({\textstyle \frac{N^{1/\beta}}{|\{ Hf > t \} |}} \Bigr)} \quad \hbox{for $t>0$,}
    \end{equation*}
    namely
    \begin{equation}
        | \{ Hf > t \}|
            \,B \Biggl( \frac{t}{C \bigl( \int_{0}^{\infty} A\bigl( f(r) \bigr)\,\d r\bigr)^{\alpha}} \Biggr)
                \leq \biggl( \int_{0}^{\infty} A\bigl(f(r)\bigr)\,\d r \biggr)^{\frac1\beta}
            \quad \hbox{for $t>0$.}
        \label{eq:weakIneq}
    \end{equation}
   From inequality \eqref{eq:weakIneq}, via assumption  \eqref{eq:Lle1} and property \eqref{kt} applied to $B$, one can deduce that
    \begin{equation}
        | \{ Hf > t \}| \,B \Bigl( \frac tC \Bigr)
            \leq \biggl( \int_{0}^{\infty} A\bigl(f(r)\bigr)\,\d r \biggr)^{\alpha+\frac1\beta}
            \quad \hbox{for $t>0$.}
        \label{eq:weakIneq2}
    \end{equation}
    Clearly, inequality \eqref{eq:weakIneq2} continues to hold even if  the integral on the right-hand side vanishes.
   \\ Our next task is  to derive a strong type
    inequality from the weak type inequality \eqref{eq:weakIneq2}. This will be accomplished via  a discretization argument.
    If the (nonnegative) function $Hf$ is unbounded, denote by $\{ s_k \}_{k\in \mathbb Z}$  a~sequence in $(0,\infty)$ such
    that
    \begin{equation}
        Hf (s_k)=2^k 
        \quad \hbox{for $k\in\Z$.}
        \label{24}
    \end{equation}
    In the case when $Hf$ is bounded, we define  the sequence $\{ s_k \}$ similarly, save that now the index $k$ ranges from $-\infty$ to the smallest   $K \in \mathbb Z$  such  that $Hf (0)\leq 2^K$. We then set  $s_K=0$, and define $s_k$ again by 
    \eqref{24}  for $k \leq K-1$.  In what follows, we shall treat these two cases simultaneously, and  $K$ will denote either $\infty$, or an integer, according to whether $Hf$ is unbounded or bounded, respectively. 
\\
Notice that $s_k$ is non-increasing, since  $Hf$ is non-increasing.
    Define
    \begin{equation*}
        f_k=f\chi_{ {\textstyle\lbrack} s_k^\beta,s_{k-1}^\beta {\textstyle )}} \quad \hbox{for $k <K$.}
    \end{equation*}
    If $k<K$, then
    \begin{equation*}
        Hf(s)\leq Hf(s_k) = 2^k
        \quad\text{for}\ s\in(s_k,s_{k-1})\,.
    \end{equation*}
    Hence,
    \begin{align}\label{eq:25}
        \int_{0}^{\infty} B
            \biggl(
                \frac{Hf(s)} {4C }
            \biggr)\,\d s
            & = \sum_{k<K} \int_{s_{k+1}}^{s_{k}} B
            \biggl(
                \frac{Hf(s)} {4C }
            \biggr)\,\d s
            \\
            &\leq \sum_{k<K}\int_{s_{k+1}}^{s_{k}} B
            \biggl(
                \frac{ 2^{k+1} }{4C}
            \biggr)\,\d s
           = \sum_{k<K} (s_{k}-s_{k+1}) B
            \biggl(
                \frac{ 2^{k-1} } {C}
            \biggr).
           \nonumber
    \end{align}
    Given any $k<K$, 
    \begin{align*}
        Hf_{k}(s)
            &\geq \int_{s_{k}^\beta}^{\infty}
                            f_{k}(r)\, r^{\alpha-1}\,\d r
            =     \int_{s_{k}^\beta}^{\infty}
                            f(r)\chi_{\bigl[ s_{k}^\beta,s_{k-1}^\beta \bigr)}(r)\, r^{\alpha-1}\,\d r   =     \int_{s_{k}^\beta}^{s_{k-1}^\beta}
                            f(r)\, r^{\alpha-1}\,\d r
                \\
            & = Hf(s_{k}) - Hf(s_{k-1})
                = 2^{k-1} \quad \hbox{for $s\in[s_{k+1},s_{k})$.}
    \end{align*}
    Consequently,
    \begin{equation}
        [s_{k+1},s_{k})
            \subset \bigl\{ Hf_{k} \geq 2^{k-1}\bigr\}\quad \hbox{for $k<K$.}
        \label{eq:inclusion}
    \end{equation}
    From inclusion~\eqref{eq:inclusion} and inequality \eqref{eq:weakIneq2} we obtain that
    \begin{align}\label{eq:28}
        (s_{k}-s_{k+1}) B
            \biggl(
                \frac{ 2^{k-1} }{C}
            \biggr)
         \leq
            \bigl| \bigl\{ Hf_{k} \geq 2^{k-1}\bigr\} \bigl| B
            \biggl(
                \frac{ 2^{k-1} }{C}
            \biggr)
         \leq \biggl( \int_{0}^{\infty}A \bigl( f_{k}(r) \bigr)\,\d r \biggr)^{\alpha+\frac1\beta}
    \end{align}
     for $k<K$.
    Coupling~\eqref{eq:25} with \eqref{eq:28},
    and exploiting  the fact that $\alpha+1/\beta \ge 1$
    yield
    \begin{align}\label{eq:29}
        \int_{0}^{\infty} B
            \biggl(
                \frac{Hf(s)}{4C}
            \biggr)\,\d s
            & \leq \sum_{k<K}
                \biggl(
                    \int_{0}^{\infty} A \bigl( f_{k}(r) \bigr)\,\d r
                \biggr)^{\alpha+\frac1\beta}
			\\ \nonumber  & \leq
                \biggl(
                    \sum_{k<K} \int_{0}^{\infty} A \bigl( f_k(r) \bigr)\,\d r
                \biggr)^{\alpha+\frac1\beta}
            \leq
                \biggl(
                    \int_{0}^{\infty} A \bigl( f(r) \bigr)\,\d r
                \biggr)^{\alpha+\frac1\beta},\nonumber
    \end{align}
    for every  function $f\in \mathcal M_+(0,\infty)$ satisfying the second inequality in \eqref{eq:Lle1}.
    Inequality \eqref{eq:29} implies 
equation \eqref{eq:HLAtoLE2}.    
\end{proof}

\begin{corollary} 
\label{cor:HOrlOrl}
Let $0<\alpha<1$, $\beta>0$ and $\alpha+1/\beta\ge 1$. 
Let $H_{\alpha , \beta}$  be the Hardy type operator defined by \eqref{Hab}. 
Assume  that $A$ and $B$ are Young functions such that
\begin{equation}
    H_{\alpha , \beta}\colon L^A(0,1) \to M^B(0,1).
        \label{eq:HLAtoME01}
\end{equation}
Then
\begin{equation}
    H_{\alpha , \beta} \colon L^A(0,1) \to L^B(0,1).
        \label{eq:HLAtoLE201}
\end{equation}
In particular,  the  space $L^A(0,1)$ is the optimal Orlicz domain  in \eqref{eq:HLAtoME01} if and only if it is  the optimal Orlicz domain  in  \eqref{eq:HLAtoLE201}. 
\end{corollary}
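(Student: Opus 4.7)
Since $L^B(0,1)\hookrightarrow M^B(0,1)$, the direction \eqref{eq:HLAtoLE201}$\,\Rightarrow\,$\eqref{eq:HLAtoME01} is automatic, so only the converse requires proof. The plan is to reduce to Lemma~\ref{lemm:HOrlOrl} via extension to $(0,\infty)$. I would define Young functions $\tilde A$, $\tilde B$ on $[0,\infty)$ that coincide with $A$ and $B$ respectively near infinity, and prescribe convenient behaviour near zero; in particular I would choose $\tilde B(t)\simeq t^{1/(\beta(1-\alpha))}$ for small $t$ so that $\tilde B$ satisfies condition \eqref{Bzero}, while $\tilde A$ would be taken linear near zero. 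Since $(0,1)$ has finite measure, \eqref{inclusion} ensures $L^A(0,1)=L^{\tilde A}(0,1)$, $M^B(0,1)=M^{\tilde B}(0,1)$, and $L^B(0,1)=L^{\tilde B}(0,1)$ up to equivalent norms.

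The crux is to establish $H_{\alpha,\beta}^\infty : L^{\tilde A}(0,\infty)\to M^{\tilde B}(0,\infty)$. By Proposition~\ref{prop:GWE} this is equivalent to the global integral inequality \eqref{intcondinf} for $\widetilde{\tilde A}$ and $\widetilde{\tilde B_{\alpha,\beta}^\infty}$. The near-infinity portion is a restatement of \eqref{eq:HLAtoME01}: rerunning the duality argument in the proof of Proposition~\ref{prop:GWE} on $(0,1)$ instead of $(0,\infty)$ (using the analogues of \eqref{upr4} and \eqref{upr5} on the unit interval, restricted to characteristic functions of subintervals) yields precisely the corresponding near-infinity version. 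The near-zero portion is a direct computation: the power-law choices of $\tilde A$ and $\tilde B$ near zero render both sides of \eqref{intcondinf} explicit, and the exponent $1/(\beta(1-\alpha))$ is exactly what balances them.

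Once weak-type boundedness on $(0,\infty)$ is in hand, Lemma~\ref{lemm:HOrlOrl} upgrades it to $H_{\alpha,\beta}^\infty : L^{\tilde A}(0,\infty)\to L^{\tilde B}(0,\infty)$. Restricting to functions $f$ supported in $(0,1)$ -- for which $H_{\alpha,\beta}^\infty f$ coincides with $H_{\alpha,\beta}f$ on $(0,1)$ and vanishes on $[1,\infty)$ -- delivers \eqref{eq:HLAtoLE201}. The optimality equivalence then follows mechanically: any Orlicz space $L^{\hat A}(0,1)$ strictly larger than $L^A(0,1)$ that satisfies one of \eqref{eq:HLAtoME01} or \eqref{eq:HLAtoLE201} satisfies the other as well, either via $L^B\hookrightarrow M^B$ or via the implication just proved, contradicting optimality of $L^A(0,1)$ in that other relation.

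The main obstacle is computing $\widetilde{\tilde B_{\alpha,\beta}^\infty}$ near zero, which requires handling the infimum in the definition of $G_{\alpha,\beta}^\infty$. Lemma~\ref{lemm:infout} provides the cleanest handle once the lower Boyd index of $\tilde B$ exceeds $1/(\beta(1-\alpha))$; the power-law choice of $\tilde B$ near zero is precisely engineered to guarantee this, so the infimum can be removed and the remaining calculation reduces to elementary manipulations with the explicit models for $\tilde A$ and $\tilde B$.
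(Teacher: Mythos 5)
Your route is essentially the paper's: extend $A$ and $B$ to the half-line by modifying them near zero, obtain the weak-type bound for $H^\infty_{\alpha,\beta}$ via Proposition~\ref{prop:GWE}, upgrade it to strong type by Lemma~\ref{lemm:HOrlOrl}, and restrict to functions supported in $(0,1)$; where you propose to rerun the duality computation on $(0,1)$, the paper simply quotes \cite[Theorem~B]{Musil}. Two points need repair, though. First, the paper disposes at the outset of the degenerate cases in which $L^A(0,1)$ or $L^B(0,1)$ equals $L^\infty(0,1)$, or in which \eqref{infinite} fails (there $B$ is dominated near infinity by $t^{1/(\beta(1-\alpha))}$, so $\widetilde{B_{\alpha,\beta}}$ is infinite near infinity and \eqref{eq:HLAtoLE201} follows at once from $H_{\alpha,\beta}\colon L^1(0,1)\to L^{1/(\beta(1-\alpha))}(0,1)$). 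Your derivation of the near-infinity integral condition by duality tacitly assumes $A$ and $B$ finite-valued and \eqref{infinite}, exactly as Musil's characterization does, so this case analysis cannot be skipped.

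Second, the claim that choosing $\tilde B(t)\simeq t^{1/(\beta(1-\alpha))}$ near zero guarantees that the lower Boyd index of $\tilde B$ exceeds $1/(\beta(1-\alpha))$ is false. Taking $s=1/t$ in the supremum defining $h^\infty_{\tilde B}$ gives $h^\infty_{\tilde B}(t)\ge \tilde B^{-1}(1)/\tilde B^{-1}(1/t)\ge c\,t^{\beta(1-\alpha)}$ for large $t$, whence $i^\infty_{\tilde B}\le 1/(\beta(1-\alpha))$; moreover the global lower index is also capped by the uncontrolled behavior of $B$ near infinity, so hypothesis \eqref{eq:lowerindexbig} may fail badly and Lemma~\ref{lemm:infout}~(ii) is not available. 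Fortunately it is also not needed: with your choice, $\tilde B^{-1}\bigl(s^{1/\beta}\bigr)s^{\alpha-1}\simeq 1$ for $0<s\le 1$, so the infimum defining $G^\infty_{\alpha,\beta}$ is comparable to a constant on $(0,1]$ and, for $t\ge 1$, is already controlled by its value at $s=1$; hence $G^\infty_{\alpha,\beta}(t)\simeq t$ near zero and $G^\infty_{\alpha,\beta}\simeq G_{\alpha,\beta}$ near infinity by inspection. The near-zero part of \eqref{intcondinf} then amounts to absorbing the finite constant $\int_0^1\widetilde{\tilde A}(s)\,s^{-1/(1-\alpha)-1}\,\d s$ (finite because $\widetilde{\tilde A}$ vanishes near zero, $\tilde A$ being linear there), which is possible after enlarging $C_2$, since $G_{\alpha,\beta}(t)\ge B^{-1}(1)\,t^{\alpha}$ for $t\ge 1$ yields, via \eqref{eq:YoungCompl}, that $\widetilde{B_{\alpha,\beta}}(s)\ge c\,s^{1/(1-\alpha)}$ near infinity (or $\widetilde{B_{\alpha,\beta}}=\infty$ there in the degenerate case). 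With these two repairs your argument coincides with the paper's proof.
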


\begin{proof}
Suppose that $A$ and $B$ are Young functions such that 
\eqref{eq:HLAtoME01} holds. 
Let us make some preliminary reduction. To begin with, we may suppose that both $A$ and $B$ are finite-valued, or, equivalently, that neither $L^A(0,1)$ nor $L^B(0,1)$ agrees with $L^\infty (0,1)$. Indeed, if $L^A(0,1)=L^\infty (0,1)$, then \eqref{eq:HLAtoLE201} holds trivially, since
$$H_{\alpha, \beta}\colon L^\infty (0,1) \to L^\infty (0,1),$$
and $L^\infty (0,1) \to L^B(0,1)$ for every Young function $B$. On the other hand, if $L^B(0,1) = L^\infty (0,1)$, then $M^B(0,1) = L^B(0,1)$, and hence \eqref{eq:HLAtoLE201} is nothing but \eqref{eq:HLAtoME01}. \\ Next, we may assume, without loss of generality, that 
\begin{equation}\label{infinite}
	\sup_{1 \leq t <\infty}
		\frac{t^{1-\alpha}}{B^{-1}(t^{1/\beta})}=\infty\,.
\end{equation}
Actually, if the supremum in \eqref{infinite} is finite, then $t^{\frac{1}{\beta (1-\alpha)}}$ dominates $B(t)$ near infinity, and hence 
$$L^{\frac{1}{\beta (1-\alpha)}}(0,1) \to L^B(0,1).$$
Since $L^A(0,1)\to L^1(0,1)$ for every Young function $A$ and 
\begin{equation}\label{march14}
H_{\alpha, \beta}\colon L^1(0,1) \to L^{\frac{1}{\beta (1-\alpha)}}(0,1),
\end{equation}
equation \eqref{eq:HLAtoLE201} holds also in this case.
\\ We may thus assume that the Young functions $A$ and $B$  are finite-valued, and that \eqref{infinite} is in force. Under these assumptions,  \cite[Theorem B]{Musil} tells us that  \eqref{eq:HLAtoME01} implies that 
\begin{equation*}
	\int _1^t \frac{\widetilde A (s)}{s^{1/(1-\alpha) +1}}\d s
		\leq \frac{\widetilde{\B}(Ct)}{t^{1/(1-\alpha)}}
		\quad \text{for $t \geq t_0$,}
\end{equation*}
for some   constants $C>0$ and $t_0>1$. 
Let us denote by $\widehat A$ and $\widehat B$ two Young functions which agree with
$A$ and $B$ near infinity, and are modified near zero in such a way
 that $\widehat B$ satisfies condition \eqref{Bzero}, and condition \eqref{intcondinf} holds with $A$ and $\Binf$ replaced by $\widehat A$ and  $\Beinf{\widehat B}$. Hence, by Proposition \eqref{prop:GWE}, 
\begin{equation*}
    H_{\alpha , \beta}^\infty \colon L^{\widehat A}(0,\infty) \to M^{\widehat B}(0,\infty)\,,
\end{equation*}
thus, by Lemma~\ref{lemm:HOrlOrl}, 
\begin{equation}\label{march12}
     H_{\alpha , \beta}^\infty \colon L^{\widehat A}(0,\infty) \to L^{\widehat B}(0,\infty)\,.
\end{equation}
Equation \eqref{march12} implies \eqref{eq:HLAtoLE201} since $L^{\widehat A}(0,1)=L^A(0,1)$, and
 $L^{\widehat B}(0,1)=L^B(0,1)$, up to equivalent norms.
\end{proof}

\begin{proof}[Proof of Lemma~\ref{lemm:OptimalityH}] Here, we make use of the simplified notation $H$ for the operator $H_{\alpha , \beta}$. 
The argument of Remark~\ref{rem:AG}, applied with $G_n$ replaced by $G_{\alpha , \beta}$, ensures that  $\B$ is actually a Young function, and
	\begin{equation*}
	\B{}^{-1}(t) \simeq t \inf_{1\le s\le t} B^{-1} \bigl( s^{1/\beta} \bigr)\,s^{\alpha-1} 
		\quad \text{for $t\geq 1$.}
	\end{equation*}
	Let us begin by considering the case when 
	\begin{equation}
		\inf_{1\le s<\infty} B^{-1} \bigl( s^{1/\beta} \bigr)\,s^{\alpha-1} > 0.
			\label{eq:Btriv}
	\end{equation}
    Under assumption \eqref{eq:Btriv} the function $B^{-1}$ cannot be constant near infinity, and hence $B$ is certainly finite-valued.
	Also,  the function $\G (t)$ is equivalent to $t$, and hence $\B(t)$ is equivalent to $t$.  Thereby $I_{\B}=1$, and 
    \begin{equation*}
     L^{\B}(0,1) = L^1(0,1),
    \end{equation*}
    up to equivalent norms.  In order to prove \eqref{eq:HLBDtoLB}, it remains 
     to show  that 
     \begin{equation}\label{march4}
     H\colon L^1(0,1) \to L^B(0,1)\,.
     \end{equation}
      To verify \eqref{march4}, note that
 condition \eqref{eq:Btriv} is equivalent to
	\begin{equation*}
		\sup_{1\le t<\infty} \frac{B(t)}{t^{1/\beta(1-\alpha)}} < \infty\,,
	\end{equation*}
	whence
	\begin{equation*}
		L^{\frac{1}{\beta(1-\alpha)}}(0,1) \to L^B(0,1).
	\end{equation*}
	This piece of information, combined  with \eqref{march14}, yields \eqref{march4}.
    Note that the domain space is trivially optimal in \eqref{march4}, since  $L^1(0,1)$ is the largest Orlicz space on $(0,1)$.
 
 Let us next focus on  the case when 
 \eqref{eq:Btriv} fails, namely \eqref{infinite} holds. 
    Thanks to Lemma~\ref{lemm:HOrlOrl},  an optimal 
     Orlicz domain $L^A(0,1)$ exists in 
    \begin{equation}\label{dic10'}
		H\colon L^A(0,1) \to L^B(0,1)
	\end{equation}
    if and only if it exists in
	\begin{equation}\label{dic8}
		H\colon L^A(0,1) \to M^B(0,1)\,.
	\end{equation}
    On the other hand, since we are assuming that \eqref{infinite} is in force, by  \citep[Theorem A]{Musil} an optimal Orlicz domain $L^A(0,1)$ in \eqref{dic8}
	exists 
	if and only if 
	\begin{equation}\label{dic9}
		\int_1^t \frac{\widetilde{\B}(s)}{s^{{1/(1-\alpha)}+1}}\,\d s
			\le \frac{\widetilde{\B}(Ct)}{t^{1/(1-\alpha)}}
				\quad\text{near infinity,}
	\end{equation}
    for some constant $C>1$.
	Condition \eqref{dic9}  is in turn equivalent to \eqref{eq:indexcond}, thanks to Proposition~\ref{prop:Bconditions}. The fact that,   in case of existence,  the optimal Orlicz space $L^A(0,1)$ in \eqref{dic8}, and hence in \eqref{dic10'}, is actually  $L^{\B}(0,1)$ is proved in   \citep[Theorem A]{Musil} again.
The proof is complete.
\end{proof}

The characterizations \eqref{inclusionsob} and \eqref{inclusionsobrn} of the inclusion relations between Orlicz-Sobolev spaces are established in the following proposition.

\begin{proposition}\label{P:inclusions}
Assume that $m , n \in \N$. Let $A$ and $B$ be Young functions.
\begin{enumerate}[\rm (i)]
\item If $\Omega$ is an open set in $\mathbb R^n$ such that $|\Omega|< \infty$, then 
\begin{equation}\label{P:inclusions1}
W^{m,A}(\Omega) \to W^{m,B}(\Omega)
	\quad \text{if and only if $A$ dominates $B$ near infinity,}
\end{equation}
and 
\begin{equation}\label{P:inclusions2}
W^{m,A}_0(\Omega) \to W^{m,B}_0(\Omega)
	\quad \text{if and only if $A$ dominates $B$ near infinity.}
\end{equation}
\item
\begin{equation}\label{P:inclusions3}
W^{m,A}(\rn) \to W^{m,B}(\rn)
	\quad \text{if and only if $A$ dominates $B$ globally.}
\end{equation}
\end{enumerate}
\end{proposition}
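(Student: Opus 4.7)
My plan is to derive the forward (``if'') direction of all three statements as a direct consequence of \eqref{inclusion}: applying that Orlicz-space inclusion to each $\nabla^k u$, $k = 0,1,\ldots,m$, and summing yields the Orlicz-Sobolev embeddings; for the $W_0$-version~\eqref{P:inclusions2}, one additionally observes that extension by zero is an isometric embedding of $W^{m,A}_0(\Omega)$ into $W^{m,A}(\rn)$, so the same argument applies.

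For the reverse directions, I plan a single scaled-bump argument that handles all three statements. Fix a $\phi \in C_c^\infty(\rn)$ with $\phi \equiv 1$ on $B_{1/2}(0)$, $\mathrm{supp}\,\phi \subset B_1(0)$, and, crucially, chosen so that $|\nabla^m \phi|$ admits a positive pointwise lower bound on some subset of $B_1(0)$ of positive measure (this is generic, since $\nabla^m\phi$ is continuous and not identically zero). Fix a ball $B_{r_0}(x_0) \subset \Omega$ ($r_0$ arbitrary when $\Omega = \rn$) and, for $r>0$, let $\phi_r(x) = \phi((x-x_0)/r)$, which lies in $C_c^\infty(B_r(x_0))$ whenever $r \leq r_0$ and hence belongs to $W^{m,A}_0(\Omega) \subset W^{m,A}(\Omega)$. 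A change of variable $y = (x-x_0)/r$ in the Luxemburg integral, combined with the pointwise bounds on $|\nabla^k \phi|$ (from above by $M_k$ and from below by $c_k$ on sets of positive measure) and the formula~\eqref{chiE} for $\|\chi_E\|_{L^A}$, yields the equivalences
\[
    \|\nabla^k \phi_r\|_{L^A(\Omega)}
        \simeq \frac{r^{-k}}{A^{-1}(1/|B_r|)},
        \qquad k = 0, 1, \ldots, m,
\]
with implicit constants depending only on $\phi$ and $n$; the analogous equivalences hold with $A$ replaced by $B$.

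With these equivalences in hand, I would conclude as follows. For part~(i), restrict to $r \in (0, r_0]$ and (by shrinking $r_0$ if needed) to $r \leq 1$, so that $\|\phi_r\|_{W^{m,A}(\Omega)} \simeq r^{-m}/A^{-1}(1/|B_r|)$ while $\|\phi_r\|_{W^{m,B}(\Omega)} \geq \|\nabla^m \phi_r\|_{L^B(\Omega)} \simeq r^{-m}/B^{-1}(1/|B_r|)$. Inserting this into the assumed embedding forces $A^{-1}(t) \leq C B^{-1}(t)$ for every $t \geq 1/|B_{r_0}|$, which (via the generalized inverses) is equivalent to $A$ dominating $B$ near infinity, and proves \eqref{P:inclusions1} and \eqref{P:inclusions2} simultaneously, since $\phi_r$ lies in both spaces. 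For part~(ii), let $r$ range over all of $(0,\infty)$: in the regime $r \leq 1$ the derivative terms dominate on both sides of the embedding and yield $A^{-1}(t) \leq C B^{-1}(t)$ for large $t$, while in the regime $r \geq 1$ the undifferentiated terms dominate and yield the same inequality for small $t$. Combining the two regimes gives global domination, proving \eqref{P:inclusions3}.

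The principal technical point I expect to be delicate is the matching lower bound $\|\nabla^m \phi_r\|_{L^B(\Omega)} \gtrsim r^{-m}/B^{-1}(1/|B_r|)$. Without it (for example, if one settled for the trivial bound $\|\phi_r\|_{W^{m,B}} \geq \|\phi_r\|_{L^B}$), the test function $\phi_r$ would only deliver the strictly weaker Sobolev-type inequality $A^{-1}(t) \lesssim t^{m/n} B^{-1}(t)$, which is insufficient to recover the inclusion relation between the two Orlicz-Sobolev spaces. Securing this lower bound is precisely what makes the careful choice of $\phi$ (with $\nabla^m \phi$ genuinely nonzero on a positive-measure set) an essential, though elementary, ingredient of the argument.
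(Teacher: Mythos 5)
Your proposal is correct, but for the nontrivial (``only if'') directions it follows a genuinely different route from the paper. The paper's proof takes an \emph{arbitrary} $f\in L^A(-\delta,\delta)$, builds a trial function by $m$-fold integration in the $x_1$-variable on a small cube, applies a bounded extension operator and a cutoff, and concludes the set inclusion $L^A(-\delta,\delta)\subset L^B(-\delta,\delta)$, upgraded to a continuous embedding via \cite[Theorem 1.8, Chapter 1]{BS}, hence to domination near infinity by \eqref{inclusion}; for the near-zero information in part (ii) it invokes the reduction principle \eqref{dic21} from \cite{ACPS}. You instead test the embedding on the one-parameter family of scaled bumps $\phi_r$, compute all the norms through \eqref{chiE} (using $A^{-1}(\lambda t)\le\lambda A^{-1}(t)$ for $\lambda\ge1$ to absorb the measure of the level set where $|\nabla^m\phi|$ is bounded below), and read off $A^{-1}(t)\le C B^{-1}(t)$ for $t\ge 1/|B_{r_0}|$ (small $r$), respectively for all $t>0$ when $\Omega=\rn$ (letting $r$ range over $(0,\infty)$), which is equivalent to domination near infinity, respectively globally. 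Your computations check out: the upper bounds need only $\sup|\nabla^k\phi|$, the lower bounds need only $k=m$ (small $r$) and $k=0$ (large $r$), and $\nabla^m\phi\not\equiv 0$ since $\phi$ is not a polynomial; moreover the two regimes on $\rn$ meet at $r=1$, so no patching issue arises in passing to global domination. What each approach buys: yours is more elementary and self-contained — no extension operator, no closed-graph-type theorem, no appeal to \cite{ACPS} — handles all three assertions uniformly, and is quantitative in the constants; the paper's argument avoids any lower-bound analysis of test functions (the delicate point you correctly single out) and any use of the standard but unproved-here equivalence between domination and comparison of generalized inverses, reducing everything to the Orlicz inclusion criterion \eqref{inclusion}.
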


\begin{proof} The \lq\lq if" parts of assertions \eqref{P:inclusions1}--\eqref{P:inclusions3} are  straightforward consequences of \eqref{inclusion}.
\\ The reverse implications in  \eqref{P:inclusions1} and \eqref{P:inclusions2} can be verified as follows. Assume that $|\Omega|<\infty$, and
\begin{equation}\label{P:inclusions4}
W^{m,A}(\Omega) \to W^{m,B}(\Omega) \quad \hbox{or} \quad W^{m,A}_0(\Omega) \to W^{m,B}_0(\Omega)\,.
\end{equation}
Suppose, without loss of generality, that $0 \in \Omega$. Let $\delta >0$ be so small that the cube~$Q$ centered at~$0$, whose sides are parallel to the coordinates axes and have length $2\delta$, is contained in~$\Omega$. Given any  function $f\in L^{A}(-\delta , \delta)$, define the function $v\colon Q \to \mathbb R$ as 
\begin{equation*}
v(x) = \int _0^{x_1} \int _0^{s_1} \dots \int _0^{s_{m-1}} f(s_m)\, \d s_m\, \d s_{m-1} \dots \d s_1 \quad \hbox{for $x \in Q$,}
\end{equation*}
where we have adopted the notation $x=(x_1, x_2, \dots , x_n)$. The function $v$ is $m$-times weakly differentiable in~$Q$. Moreover, $\tfrac {\partial ^k v}{\partial x_1^k} \in L^\infty (Q)$ if $1\le k\leq m-1$, $\tfrac {\partial ^m v}{\partial x_1^m}(x)= f(x_1)$ for $x \in Q$, and any other derivative vanishes identically.  Hence,   $v\in W^{m,A}(Q)$.
By \cite[Theorem 4.1]{CianchiRandolfi},  there exists a bounded linear  extension operator $\mathcal E\colon W^{m,A}(Q) \to W^{m,A}(\rn)$. Fix any function $\eta \in C^\infty _0(\Omega)$  such that $\eta = 1$ in~$Q$. Define 
$u\colon \Omega \to \mathbb R$ as 
\begin{equation}\label{trial}
u = \eta  \, \mathcal E (v)\,.
\end{equation}
Then $u \in W^{m,A}(\Omega)$, and, in fact, $u \in W^{m,A}_0(\Omega)$. By either of embeddings \eqref{P:inclusions4},   $u\in W^{m,B}(\Omega)$ as well, and hence $f \in  L^{B}(-\delta , \delta)$. Owing to the arbitrariness of $f$,  this implies that $L^{A}(-\delta , \delta)\subset L^{B}(-\delta , \delta)$, and by \cite[Theorem 1.8, Chapter 1]{BS}, in fact $L^{A}(-\delta , \delta)\to L^{B}(-\delta , \delta)$. Hence, by \eqref{inclusion}, $A$ dominates $B$ near infinity. 
\par
As far as the \lq\lq only if" part of assertion \eqref{P:inclusions3} is concerned, the  choice of trial functions $u$ as in~\eqref{trial} implies that $A$ dominates $B$ near infinity also when $\Omega = \mathbb R^n$. On the other hand, if  embedding~\eqref{P:inclusions3} is in force, then, in particular, 
$$W^{m,A}(\rn) \to L^{B}(\rn),$$
whence $A$ dominates $B$ also near zero, by \eqref{dic21}. Therefore, 
 $A$ dominates $B$ globally.
\end{proof}

We are now in a position to accomplish the proofs of our main results.

\begin{proof}[Proof of Theorem~\ref{thm:W0}]
% Optimal Orlicz-Sobolev domain under vanishing boundary conditions
The fact that an optimal Orlicz domain space in  \eqref{sobineq0} exists if and
only if \eqref{eq:Johnindex} holds, and that, in the affirmative case, it
agrees with $W^{m, B_n}_0(\Omega)$, follows from Lemma~\ref{lemm:OptimalityH},
via  the equivalence of the Sobolev inequality \eqref{sobineq0} and of the Hardy
type inequality \eqref{hardy0}. Property \eqref{inclusionsob} also plays a role here.
\\
The assertion about the validity of equation \eqref{oct2}   is a consequence of Lemma~\ref{lemm:indices}.
\end{proof}

\begin{proof}[Proof of Corollary~\ref{cor:W0}]
Fix $u\in W^{m, B_n^\infty}_0(\Omega)$, and  assume, without loss of generality, that $\int_\Omega B_n^\infty(|\nabla^m u|)\,\d y < \infty$, otherwise \eqref{oct11} is trivially satisfied. By Proposition~\ref{prop:Bconditions},  assumption \eqref{eq:Johnindexglobal} is 
equivalent to the existence of a constant $C_1>0$, such that
\begin{equation} \label{BDind}
	\int_0^t \frac{\widetilde{B_n^\infty}(s)}{s^{{n/(n-m)}+1}}\,\d s
		\le \frac{\widetilde{B_n^\infty}(C_1t)}{t^{n/(n-m)}}
		\quad\text{for $t>0$}.
\end{equation}
Given $N>0$, let  $B_N$ be the Young function defined as
\begin{equation*}
	B_N(t) = \frac{B\bigl( t N^{-\frac{m}{n}} \bigr)}{N}
		\quad \text{for $t\ge 0$.}
\end{equation*}
Then, the Young function $(B_N)_n^\infty$ associated with $B_N$ as in \eqref{Aglob} satisfies
\begin{equation*}
	(B_N)_n^\infty = \frac{B_n^\infty}{N}.
\end{equation*}
One can thus verify that inequality 
\eqref{BDind} continues to hold with $B_n^\infty$ replaced by $(B_N)_n^\infty$, and
with the same constant $C_1$, whatever $N$ is.
Hence, by Proposition~\ref{prop:GWE} and
Lemma~\ref{lemm:HOrlOrl}, 
\begin{equation*}
	\|H_{\frac mn, 1}^\infty f\|_{L^{B_N}(0,\infty)}
		\leq C_2  \|f\|_{L^{(B_N)_n^\infty}(0,\infty)}
\end{equation*}
for every $f\in L^{(B_N)_n^\infty}(0,\infty)$, for some constant 
for some $C_2$ independent of $N$. In particular, 
\begin{equation*}
	\|H_{\frac mn, 1}f\|_{L^{B_N}(0,1)}\leq C_2  \|f\|_{L^{(B_N)_n^\infty}(0,1)}
\end{equation*}
for every $f\in L^{(B_N)_n^\infty}(0,1)$. Therefore, owing to the equivalence of inequalities \eqref{sobineq0} and \eqref{hardy0},
\begin{equation} \label{eq:nov1}
	\|u\|_{L^{B_N}(\Omega)} \le C \|\nabla^m u\|_{L^{(B_N)_n^\infty}(\Omega)}
\end{equation}
for every $u\in W_0^{m,(B_N)_n^\infty}(\Omega)$, where the  constant $C$ is independent of $N$.
On choosing
\begin{equation*}
	N = \int_{\Omega} B_n^\infty(|\nabla^m u|)\,\d y\,,
\end{equation*}
and observing that $\|\nabla^m u\|_{L^{(B_N)_n^\infty}(\Omega)}\le 1$ with this choice of $N$, inequality  \eqref{eq:nov1} yields $\|u\|_{L^{B_N}(\Omega)}\le C$. Therefore
\begin{equation*}
	\int_\Omega B_N\biggl(\frac{|u(x)|}{C} \biggr)\,\d x \le 1\,,
\end{equation*}
whence, by the definition of $B_N$,
\begin{equation*}
	\int_\Omega B\biggl(\frac{|u(x)|}{C N^{m/n}} \biggr)\,\d x \le N\,,
\end{equation*}
namely \eqref{oct11}.
\end{proof}

\begin{proof}[Proof of Theorem~\ref{thm:domainJohn}]
The proof follows along  the same lines as that of Theorem~\ref{thm:W0}. Here, the equivalence 
of   inequalities \eqref{sobineqjohn} and \eqref{hardy0} comes into play.
\end{proof}

\begin{proof}[Proof of Theorem~\ref{thm:domainRn}]
The reduction principle for  inequality \eqref{sobineqrn} is relevant in this proof. Recall that such a principle  asserts that this inequality is equivalent to the simultaneous validity of  inequality \eqref{hardy0} and of property \eqref{dic21}. Now, assume that condition \eqref{eq:Johnindex} holds. Then, by Lemma~\ref{lemm:OptimalityH}, inequality \eqref{hardy0} holds with either $L^A(0,1)=L^{1}(0,1)$, or $L^A(0,1)=L^{B_n}(0,1)$,   according to whether $m \geq n$ or $1\leq m <n$. On the other hand, 
\eqref{dic21} trivially holds by the very definition of $\bar B$ and $\bar{B}_n$. Thus, inequality \eqref{sobineqrn}, with $A=\bar B$ or $A=\bar{B}_n$, holds, and
embedding \eqref{easyrn} or \eqref{embrn}, respectively, follows. Moreover, $W^{m, \bar{B}}(\rn)$, or $W^{m, \bar{B}_n}(\rn)$ is optimal in \eqref{easyrn} or \eqref{embrn}. Indeed, if 
inequality \eqref{sobineqrn} holds
for some Young function $A$, then, by the reduction principle, $A$ has to dominate $B$ near $0$, and inequality \eqref{hardy0} must hold. By the optimality of the domain space $L^1(0,1)$ or $L^{B_n}(0,1)$ in \eqref{hardy0}, the function $A(t)$ has to dominate $t$ or $B_n(t)$ near infinity. Thus, $A$ dominates $\bar B$ or $\bar B_n$ globally, whence $W^{m, A}(\rn) \to W^{m, \bar{B}}(\rn)$, or $W^{m, A}(\rn) \to W^{m, \bar{B}_n}(\rn)$, thus proving the optimality of $W^{m, \bar{B}}(\rn)$, or $W^{m, \bar B_n}(\rn)$. 
\\ Conversely, suppose  that condition \eqref{eq:Johnindex} fails. Then, by Lemma~\ref{lemm:OptimalityH}, there does not exist an optimal Orlicz space $L^A(0,1)$ in 
\eqref{hardy0}. As a consequence of the reduction principle, and of \eqref{inclusion} and \eqref{inclusionsobrn}, there does not exist an optimal domain Orlicz-Sobolev space in embedding \eqref{sobrn}.
\end{proof}

\begin{proof}[Proof of Theorem~\ref{thm:Traces}]
This is a consequence of Lemma~\ref{lemm:OptimalityH} and of  the
reduction principle for Sobolev  embeddings with measure, which asserts the
equivalence of  inequalities \eqref{traceineq}
and \eqref{hardytrace}.
\end{proof}

\begin{proof}[Proof of Corollary~\ref{cor:Traces}]
The proof of inequality \eqref{oct16} relies upon a scaling argument as in the proof of Corollary~\ref{cor:W0}. Here, $B(t)$ has to be replaced by $B_N(t)=  N^{-\frac \gamma n}B(tN^{-\frac mn})$,
where
\begin{equation*}
	N = \sum_{k=0}^m \int_\Omega B_\gamma(|\nabla^k u|)\,\d y\,.
\end{equation*}
\end{proof}

%\bibliography{bibfile.bib}

\end{document}